\title{Deviation and moment inequalities for Banach-valued
$U$-statistics}
\author{Davide Giraudo}
\address{Institut de Recherche Mathématique Avancée UMR 7501, Université de
Strasbourg and CNRS 7 rue René Descartes 67000 Strasbourg, France }
\email{ dgiraudo@unistra.fr}
\keywords{$U$-statistics, martingales, Banach space}
\date{\today}
\numberwithin{equation}{section}
\renewcommand{\leq}{\leqslant}
\renewcommand{\geq}{\geqslant}
\newtheorem{Theorem}{Theorem}[section]
\newtheorem{Proposition}[Theorem]{Proposition}
\newtheorem{Lemma}[Theorem]{Lemma}
\newtheorem{Definition}[Theorem]{Definition}
\newtheorem{Corollary}[Theorem]{Corollary}
\theoremstyle{remark}
\tikzstyle{Vertex}=[circle,draw=LimeGreen!80,fill=LimeGreen!8,
\tikzstyle{Node}=[Vertex,draw=RoyalBlue!80,fill=RoyalBlue!8,inner sep=1.5pt]
\tikzstyle{Leaf}=[rectangle,draw=Black!70,fill=Black!16,
\tikzstyle{Edge}=[Maroon!80,cap=round,line width=1pt]
\tikzstyle{Mark1}=[draw=BrickRed!80,fill=BrickRed!8]
\tikzstyle{Mark2}=[draw=BurntOrange!80,fill=BurntOrange!8]
\tikzstyle{EdgeRew}=[->,RedOrange!80,cap=round,thick]
\newcommand{\Aca}{\mathcal{A}}
\newcommand{\Bca}{\mathcal{B}}
\newcommand{\Fca}{\mathcal{F}}
\newcommand{\Gca}{\mathcal{G}}
\newcommand{\Hca}{\mathcal{H}}
\newcommand{\Sca}{\mathcal{S}}
\newcommand{\Uca}{\mathcal{U}}
\newcommand{\intent}[1]{\llbracket #1\rrbracket}
\newcommand{\B}{\mathbb{B}}
\newcommand \ens[1]{\left\{ #1\right\}}
\newcommand \R{\mathbb R}
\newcommand \N{\mathbb N}
\newcommand \PP{\mathbb P}
\newcommand{\el}{\mathbb L}
\newcommand{\E}[1]{\mathbb E\left[#1\right]}
\newcommand \Z{\mathbb Z}
\newcommand \abs[1]{\left|#1\right|}
\newcommand \eps{\varepsilon}
\newcommand{\lb}{\lambda}
\newcommand{\pr}[1]{\left(#1\right)}
\newcommand{\norm}[1]{\left\lVert #1 \right\rVert}
\newcommand{\gr}[1]{\bm{#1}}
\newcommand{\gri}{\gr{i}}
\newcommand{\grj}{\gr{j}}
\newcommand{\inc}{\operatorname{Inc}}
\newcommand{\ind}[1]{\mathbf{1}_{#1}}
\newcommand{\til}[1]{\widetilde{#1}}
\newcommand{\ent}[1]{\left\lfloor #1\right\rfloor}
\newcommand{\card}[1]{\operatorname{Card}\pr{#1}}
\begin{document}

\maketitle

\begin{abstract}
 We show a deviation inequality  for
$U$-statistics of independent data taking values in a
separable Banach space which satisfies some smoothness assumptions.  We then
provide applications to  rates in the law of large numbers for  $U$-statistics, 
a Hölderian functional central limit theorem and a moment inequality for 
incomplete $U$-statistics.
\end{abstract}

\section{Deviation inequalities for $U$-statistics}

\subsection{Introduction and motivations}

Furnishing a bound for the probability that a random variable is bigger than
some fixed numbers plays a very important role in probability theory and its
applications. It can be used in order to control
the convergence rates in the law of large numbers, which can
translate into consistency rates for estimators. Moreover, tightness criterion
can be checked via such deviation inequalities.

Since weighted series of tails can be bounded by moments of a random variable,
a direct application of deviation inequalities can provide the
aforementioned results, in particular, without using truncation arguments.

We plan to furnish such inequalities for $U$-statistics, which are defined as
follows. Let $m$ be an integer, let  $\pr{\xi_i}_{i\in\Z}$ be
an i.i.d.\ sequence with values in a measurable space
$\pr{S,\Sca}$ and for $\gri=\pr{i_\ell}_{\ell=1}^m$ such
that $1\leq i_1<\dots<i_m$, let $h_{\gri}\colon S^m\to \B$
be measurable functions, where $\pr{\B,\norm{\cdot}_{\B}}$ is
a separable Banach space and $S^m$ is endowed with
the product $\sigma$-algebra of $\Sca$. The associated $U$-statistic
of order $m$ is defined by
\begin{equation}\label{eq:def_U_stat}
  U_{m,n}\pr{\pr{h_{\gri}}}=\sum_{
\gri\in \inc_n^m}
h_{\gri}\pr{\xi_{\gri}}, \quad n\geq m,
\end{equation}
where
\begin{equation}
 \inc_n^m=\ens{ \gri=\pr{i_\ell}_{\ell\in\intent{1,m}}, 1\leq 
i_1<i_2<\dots<i_m\leq
n},
\end{equation}
 for $k\leq m$,  $\intent{k,m}$ denotes the set $\ens{k,\dots,m}$ and 
$\xi_{\gri}=\pr{\xi_{i_1},\dots,\xi_{i_m}}$.

Such a general framework allows to consider classical $U$-statistics (when
$h_{\gri}$ is independent of $\gri$),
weighted $U$-statistics and incomplete $U$-statistics (see 
Subsection~\ref{subsec:moment_inequ_incomp} for a formal
definition).

We would like to bound the tails of  maxima of the norm
of $U_n$, that is, we would like to find a bound for
\begin{equation}
 \PP\pr{\max_{m\leq n\leq N}\norm{U_{m,n}\pr{\pr{h_{\gri}}}}_{\B}>t }
\end{equation}
in terms of the tail of a suitable random variable.

Exponential bounds have been investigated in the literature, see for 
instance \cite{MR1742613,MR1336800,MR1323145,MR2073426,MR1857312}.
The case of not necessarily independent random variables has also
been addressed in \cite{MR4550210,MR3769824,MR4059185,MR2797944}.
In this paper, we shall present inequalities
for $U$-statistics taking values in Banach spaces and such that the
tail of the random variables
$\norm{h_{\gri}\pr{\xi_{\gri} }}_{\B}$ decays
like a power function.

For real-valued martingales difference sequences,
it is known that we can control the tail of maxima of partial sums 
via the tail of maxima and the sum of conditional variances (see 
\cite{MR2021875}). More precisely, given a positive $q$, there exists a 
constant $C_q$ such that for each real-valued martingale difference 
sequence $\pr{D_i}_{i\geq 1}$ with respect to a filtration 
$\pr{\Fca_i}_{i\geq 0}$ and each positive $t$, 
\begin{multline}\label{eq:deviation_nagaev}
 \PP\pr{\max_{1\leq n\leq N}\abs{\sum_{i=1}^n D_i}>t  }\\
 \leq C_q\int_0^1 u^{q-1}\PP\pr{\max_{1\leq i\leq N}\abs{D_i}>tu}du+
 C_q\int_0^1 u^{q-1}\PP\pr{\pr{\sum_{i=1}^N\E{D_i^2\mid\Fca_{i-1}}}^{1/2}>tu}du.
\end{multline}
A Banach-valued version with not necessarily finite variance is given in 
Theorem~1.3 of \cite{MR4046858}. 
Such inequalities are very convenient to use, especially in the case where the 
random variables $\abs{D_i}$ have the same distribution. In this case, 
\eqref{eq:deviation_nagaev} gives for each $q>2$, 
\begin{equation*}
  \PP\pr{\max_{1\leq n\leq N}\abs{\sum_{i=1}^n D_i}>t\sqrt{N}  }
  \leq C_q\int_0^\infty\PP\pr{\abs{D_1}>tu}\min\ens{u^{q-1},u}du
  \leq C'_q\E{\min\ens{\frac{\abs{D_1}^q}{t^q},\frac{ D_1 ^2}{t^2}  }},
\end{equation*}
where $C'_q$ depends only on $q$ (Theorem~1.7 of \cite{MR4046858}, where a 
version for random variables with infinite variance is also presented). 
Then one can deduce convergence of series of the form 
$A:=\sum_{N=1}^\infty N^\alpha\PP\pr{\max_{1\leq n\leq N}\abs{\sum_{i=1}^n 
D_i}>N^\beta}$ by series involving only the tail of $\abs{D_1}$
and get thanks to the elementary inequality $\sum_{N=1}^\infty 2^{N 
p}\PP\pr{Y>2^N}\leq C_p\E{Y^p}$, an integrability condition on $D_1$ to get the 
convergence of the series $A$. This leads to the obtainment of 
convergence rates in the strong law of large numbers. Moreover, 
it is possible to check some tightness criteria in Hölder spaces for 
partial sum processes which are expressed in terms of tails (see equation (1.4) 
in \cite{MR3615086} and Proposition~1.1 in \cite{MR4294337}).

Motivated by these applications, we would like to formulate inequalities of the 
form \eqref{eq:deviation_nagaev} for $U$-statistics defined as in 
\eqref{eq:def_U_stat} and find bounds expressible as $\int_0^1u^{q-1}
\PP\pr{Y_k>tu}du$, where $Y_k$ are non-negative random variables related to the 
kernels $h_{\gri}$ and the random variables $\xi_1,\dots,\xi_m$. Under 
some conditions of degeneracy (see Definition~\ref{def:degenerated}), 
the considered $U$-statistic is a sum of martingale differences with respect 
to each index of summation. However, the term  corresponding to condition 
variances can only be expressed as 
a random variable conditioned with respect to some $\sigma$-algebra. This make 
the iteration of inequality \eqref{eq:deviation_nagaev} difficult and 
led us to control the conditional moments of a martingale difference sequence. 
Also, we will use reasoning by induction on the order $m$ of the $U$-statistic 
and it turns out that the consideration of Banach-valued random variables helps 
since the conditional variance term that arises can be viewed as a martingale 
difference sequence in an other Banach space (such an idea was used in 
\cite{giraudo2022deviation} for multi-indexed martingales).

Once the wanted deviation inequality is established, one can readily derive a 
moment inequality which is in the spirit of the one obtained in 
\cite{MR1734266}, but for real-valued $U$-statistics of order two. 
For Banach-valued random variables, a result has been obtained in
\cite{MR2294982}, where the upper bound is expressed in terms of
operator norm.

\subsection{A deviation inequality for $U$-statistics}

In order  to formulate deviation inequalities for
$\norm{U_n}_{\B}$ and $\max_{m\leq n\leq N}\norm{U_n}_{\B}$, one has to make
assumptions on the random variables $h_{\gri}\pr{\xi_{i_1},
 \dots,\xi_{i_m}}$, even if this random variable is centered. To see this,
 consider the case $\B=\R$ and $m=2$. If $h_{i_1,i_2}\pr{x,y}=x+y$ then
 $U_n=\pr{n-1}\sum_{i=1}^nX_i$ while with $h_{i_1,i_2}\pr{x,y}=xy$ we have
 $U_n=1/2\pr{\sum_{i=1}^nX_i}^2-\sum_{i=1}^n X_i^2$ hence the behavior of the
tails will change drastically.

In order to formulate general results, we will adopt the following definition.
\begin{Definition}\label{def:degenerated}
 We say that a function $h\colon S^m\to \B$ is degenerated with respect to the
 i.i.d.\ $S$-valued sequence $\pr{\xi_i}_{i\in\Z}$ if for each
 $\ell_0\in\intent{1,m}$,
\begin{equation}\label{eq:def_degenerated}
 \E{h\pr{\xi_{\intent{1,m}}} \mid  \xi_{\intent{1,m}\setminus 
\ens{\ell_0}}}=0,
\end{equation}
where 
\begin{equation}\label{eq:def_xi_J}
 \xi_{J}=\pr{\xi_j}_{j\in J}.
\end{equation}

 We say that a $U$-statistic of the form \eqref{eq:def_U_stat} is
 degenerated with
 respect to the sequence $\pr{\xi_i}_{i\in\Z}$ if all the functions
$h_{\gri}$, $\gri\in\inc^m:=\ens{\gri=\pr{i_\ell}_{\ell\in\intent{1,m}},1\leq 
i_1<\dots<i_m }$ are degenerated with
 respect to the sequence $\pr{\xi_i}_{i\in\Z}$.
\end{Definition}
However, degeneracy of $U_n$ does not always hold, like the previous example
$h_{i_1,i_2}\pr{x,y}=x+y$ shows. Nevertheless, it is possible to express
$U_n$ as sum of degenerated $U$-statistics of smaller order. Indeed, for a
subset $I$ of $\intent{1,m}$, let us denote by $\abs{I}$ the
cardinal of $I$ and define the function $h_{\gri}^I\colon
S^{\abs{I}}\to \B$ by
\begin{equation}\label{eq:def_hI}
 h_{\gri}^I\pr{ \pr{x_{i_u}}_{u\in I} }
 =\sum_{J:J\subseteq I}\pr{-1}^{\abs{I}-\abs{J}}
 \E{h_{\gri}\pr{ V^{I,J}\pr{\pr{x_{i_u}}_{u\in I} }
} },
\end{equation}
where the random vector $V^{I,J}\pr{\pr{x_{i_u}}_{u\in I}}$ belongs to $S^m$,
the $k$-th coordinates is $x_{i_u}$ if $k=i_u$ for some (hence exactly one)
$u\in J$ and $\xi_k$ if $k$ is not of this form. In this way, writing 
$\gri_J=\pr{i_\ell}_{\ell\in J}$ the equality
\begin{equation}\label{eq:fonctions_dans_Hoeffding_egalite_esp_cond}
  h_{\gri}^I\pr{ \pr{\xi_{i_u}}_{u\in I} }
  =\sum_{J:J\subseteq I}\pr{-1}^{\abs{I}-\abs{J}}
 \E{h_{\gri}\pr{ \xi_{\gri}
} \mid    \xi_{\gri_J} }
\end{equation}
holds almost surely, where $\E{\cdot\mid \xi_{\gri_\emptyset}}=\E{\cdot}$. 
Moreover, the equality
\begin{equation}
 \sum_{I\subset\intent{1,m}}h_{\gri}^I\pr{\xi_{\gri_I}
}=h_{\gri}\pr{\xi_{\gri}}
\end{equation}
takes places (to see this, one can switch the sums over $I$ and $J$ and use the
fact that $\sum_{I:J\subseteq I}\pr{-1}^{\abs{I}-\abs{J}}$ equals $0$ if $J$ is
not $\intent{1,m}$ and $1$ otherwise). As a consequence, the $U$-statistic
$U_n$ defined by \eqref{eq:def_U_stat} can be decomposed as
\begin{equation}\label{eq:decomp_Hoeffding}
 U_n=\sum_{I:I\subset\intent{1,m}}U_n^I,
\end{equation}
where $U_n^{\emptyset}=\sum_{\gri\in \inc_n^m}
\E{h_{\gri}\pr{\xi_{\intent{1,m}}}}$ and for
a non-empty $I\subset\intent{1,m}$,
\begin{equation}\label{eq:def_Ustats_dans_la_decomposition}
 U_n^{I}= \sum_{\gr{j_I}\in \inc_n^{\abs{I}}  }
 H^{I}_{\gr{j_I}}\pr{  \xi_{\gr{j_I}}   },
\end{equation}
\begin{equation}\label{eq:def_function_HI}
H^{I}_{\gr{j_I}}\pr{  \xi_{\gr{j_I}}   }
 = \sum_{\gr{i_{\intent{1,m}\setminus I  }
 }}h_{\sum_{\ell\in I}j_\ell \gr{e_\ell}+\sum_{\ell'\in \intent{1,m}\setminus I
}i_{\ell'}\gr{e_{\ell'}}}^I\pr{  \xi_{\gr{j_I}}   },
\end{equation}
$\gr{e_\ell}$ is a vector whose entry $\ell$ is one and the others zero, 
and the sum runs over the $\gr{i_{\intent{1,m}\setminus I  }
 }$ such
that
\begin{equation}
\sum_{\ell\in I}
 j_\ell \gr{e_\ell}+\sum_{\ell'\in \intent{1,m}\setminus I
}i_{\ell'}\gr{e_{\ell'}}\in \inc_{n}^{\abs{I}}.
\end{equation}
In this way, the $U$-statistic $ U_n^{I}$ has order $\abs{I}$ and is
degenerated.
When $h_{\gri}$ does not depend on $\gri$, this was done in
Theorem~9.1 in \cite{MR3087566}.

For $m=2$, the decomposition reads
\begin{equation}
 U_n=U_n^{\emptyset}+U_n^{\ens{1}}+U_n^{\ens{2}}+U_n^{\ens{1,2}},
\end{equation}
where
\begin{equation}
 U_n^{\emptyset}=\sum_{1\leq i_1<i_2\leq n}\E{h_{i_1,i_2}\pr{\xi_1,\xi_2}},
\end{equation}
\begin{equation}
U_n^{\ens{1}}=\sum_{i_1=1}^{n-1}  \sum_{j_2=i_1+1}^n
\E{h_{i_1,j_2}\pr{\xi_{i_1},\xi_{j_2}}\mid\xi_{i_1} },
\end{equation}
\begin{equation}
U_n^{\ens{2}}=\sum_{i_2=2}^{n}  \sum_{j_1=1}^{i_2-1}
\E{h_{j_1,i_2}\pr{\xi_{j_1},\xi_{i_2}}\mid\xi_{i_2} },
\end{equation}
\begin{multline}
U_n^{\ens{1,2}}=\sum_{1 \leq i_1<i_2\leq n}\left(
h_{i_1,i_2}\pr{\xi_{i_1},\xi_{i_2}}-
\E{h_{i_1,i_2}\pr{\xi_{i_1},\xi_{i_2}}\mid\xi_{i_1}
}+\right.\\
\left.
-\E{h_{i_1,i_2}\pr{\xi_{i_1},\xi_{i_2}}\mid\xi_{i_1} }+
\E{h_{i_1,i_2}\pr{\xi_{i_1},\xi_{i_2}} }\right).
\end{multline}

Note that degenerated $U$-statistics enjoy a martingale property. It is known
that the geometry of
the involved Banach space plays a important role in the derivation of moments
and deviation inequalities.

\begin{Definition}
 Let $\pr{\B,\norm{\cdot}_{\B}}$ be a separable Banach space. We say that $\B$
is $r$-smooth for $1<r\leq 2$ if there exists an equivalent norm
$\norm{\cdot}'_{\B}$ on $\B$ such that
\begin{equation}
\sup_{t>0}\sup_{x,y\in\B,\norm{x}'_{\B}=
\norm{y}'_{\B}=1,}\frac{\norm{x+ty}'_{\B}
+\norm{x-ty}'_{\B}-2}{
t^r}<\infty.
\end{equation}
\end{Definition}

By \cite{MR0407963}, we know that if $\B$ is a separable $r$-smooth Banach
space, then
there exists a constant $C$ such that for each martingale
difference sequence $\pr{D_i}_{i\geq 1}$
with values in $\B$ and each $n$,
\begin{equation}
 \E{\norm{\sum_{i=1}^nD_i}_{\B}^r}\leq C\sum_{i=1}^n\E{\norm{D_i}_{\B}^r}.
\end{equation}
By definition, an $r$-smooth Banach space is also $p$-smooth for $1<p\leq r$,
hence it is possible to define
\begin{equation} \label{eq:definition_constant_Banach_space}
 C_{p,\B}:=\sup_{n\geq 1}\sup_{\pr{D_i}_{i=1}^n \in \Delta_n}
\frac{\E{\norm{\sum_{i=1}^nD_i}_{\B}^p} }{\sum_{i=1}^n\E{\norm{D_i}_{\B}^p}},
\end{equation}
where the $\Delta_n $ denotes the set of the  martingale difference sequences
$\pr{D_i}_{i=1}^n$ such that $\sum_{i=1}^n \norm{D_i}_{\B}^p$ is not
identically $0$.

We will now state a general deviation
inequality for Banach valued $U$-statistics. The obtained bound involves 
$2^m$ terms. For each subset $J$ of $\intent{1,m}$, we sum over the indices in 
$J$ a function of the tail of sums of conditional $p$-th moments of 
$\norm{h_{\gri}\pr{\xi_{\intent{1,m}}}}_{\B}$ over the coordinates in 
$\intent{1,m}\setminus J$.

For a proper non-empty subset $J$ of $\intent{1,m}$, we define the subset of $\N^m$, denoted $\N^{J,m}$, by $
\N^{J,m}=\ens{
\sum_{j\in J}v_j\gr{e_j}, v_j\in\N}$, where $\gr{e_j}$ is the element of $\N^m$ 
having $1$ at coordinate $j$ and $0$ at the others.  
For a subset $J$ of $\intent{1,m}$, recall the notation  
$\xi_J$ given by \eqref{eq:def_xi_J}.

The key deviation inequality on which all the results of the paper rest reads 
as follows.
\begin{Theorem}\label{thm:deviation_inequality_Ustats}
Let $\pr{\B,\norm{\cdot}_{\B}}$ be an $r$-smooth Banach space for some 
$r\in (1,2]$. Let $\pr{\xi_i}_{i\geq 1}$ be an i.i.d.\ sequence
taking values in a measurable space $\pr{S,\Sca}$, let
$h_{\gri}\colon S^m\to \B$ be degenerated with
respect to $\pr{\xi_i}_{i\geq 1}$, that is, such that for each
$\ell_0\in\intent{1,m}$,
\begin{equation}
\E{h_{\gri}\pr{\xi_{\intent{1,m}}}\mid 
 \xi_{\intent{1,m} \setminus \ens{\ell_0}} }=0.
\end{equation}
The following inequality takes place for each $p\in (1,r]$  and each positive $t$ and $q$:
\begin{multline}\label{eq:deviation_inequality_Ustats}
\PP\pr{\max_{m\leq n\leq 
N}\norm{\sum_{\gri\in\inc_n^m}h_{\gri}\pr{\xi_{\gri}} 
}_{\B}>t }\\
\leq K\pr{m,p,q,\B}\sum_{\gri\in\inc_N^m}
\int_0^1u^{q-1}\PP\pr{\norm{h_{\gri}\pr{\xi_{\intent{1,m}}}}_{\B} >tu} du\\
+K\pr{m,p,q,\B}
\sum_{\emptyset \subsetneq J\subsetneq \intent{1,m}}\sum_{\gr{i_J}\in\N^{J,m} 
}\int_0^1 u^{q-1}
\PP\pr{\pr{ \sum_{\gr{i_{J^c}}: \gr{i_J}+\gr{i_{J^c}}\in \inc_N^m } \E{ \norm{ 
h_{\gr{i_J}+\gr{i_{J^c}}}\pr{\xi_{\intent{1,m}}}   }_{\B}^p  \mid \xi_J  }   
}^{1/p}>tu}du\\
+K\pr{m,p,q,\B}t^{-q}\pr{\sum_{\gri\in\inc_N^m }
\E{\norm{h_{\gri}\pr{\xi_{\intent{1,m}}}}^p_{\B}}  }^{q/p},
\end{multline}
where $K\pr{m,p,q,\B}$ depends only on $m$, $p$, $q$ and $\B$.
\end{Theorem}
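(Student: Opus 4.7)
The plan is to establish Theorem~\ref{thm:deviation_inequality_Ustats} by induction on $m$, iterating a Banach-valued Nagaev-type inequality for martingale differences. For the base case $m=1$, degeneracy reduces to $\E{h_i\pr{\xi_i}}=0$, so $U_n$ is a sum of independent centered $\B$-valued random variables, the middle sum over proper non-empty $J\subsetneq\ens{1}$ is vacuous, and the inequality follows directly from the Banach-valued Nagaev inequality (Theorem~1.3 of \cite{MR4046858}).

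For the inductive step, I would write $U_n=\sum_{j=m}^n E_j$ with $E_j=\sum_{\gri_\ast\in\inc_{j-1}^{m-1}}h_{\pr{\gri_\ast,j}}\pr{\xi_{\gri_\ast},\xi_j}$. Degeneracy with $\ell_0=m$ makes $\pr{E_j}$ a $\B$-valued martingale difference sequence with respect to $\Fca_j=\sigma\pr{\xi_1,\ldots,\xi_j}$. The Nagaev inequality applied to $(E_j)$ yields three terms: (i) $\int_0^1 u^{q-1}\PP\pr{\max_j\norm{E_j}_\B>tu}du$; (ii) $\int_0^1 u^{q-1}\PP\pr{V>tu}du$ with $V=\pr{\sum_j\E{\norm{E_j}_\B^p\mid\Fca_{j-1}}}^{1/p}$; (iii) an $L^p$ residual of the form $t^{-q}\pr{\sum_j\E{\norm{E_j}_\B^p}}^{q/p}$, which via the $r$-smoothness of $\B$ is bounded by $t^{-q}\pr{\sum_{\gri\in\inc_N^m}\E{\norm{h_\gri}_\B^p}}^{q/p}$ and exactly matches the final term of the theorem. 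Term~(i) is handled by a union bound followed by conditioning on $\xi_j$: each $E_j$ becomes a degenerate $\B$-valued $U$-statistic of order $m-1$ in $\xi_1,\ldots,\xi_{j-1}$, so the inductive hypothesis applies, and after integrating over $\xi_j$ its three contributions map to the main theorem's terms indexed by $J=\intent{1,m}$ (tail term) and by $J=J'\cup\ens{m}$ with $J'\subseteq\intent{1,m-1}$ (the terms with $m\in J$, including $J=\ens{m}$ coming from the IH residual).

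The crux is term~(ii). I introduce the Bochner space $\Eca:=L^p\pr{P_{\xi_1};\B}$ and set $\Phi_j\pr{\cdot}:=\sum_{\gri_\ast\in\inc_{j-1}^{m-1}}h_{\pr{\gri_\ast,j}}\pr{\xi_{\gri_\ast},\cdot}\in\Eca$, so that $\norm{\Phi_j}_\Eca^p=\E{\norm{E_j}_\B^p\mid\Fca_{j-1}}$ and $V=\norm{\pr{\Phi_j}_j}_{\ell^p\pr{\Eca}}$. Writing $\Psi_{\gri_\ast}\pr{\xi_{\gri_\ast}}:=\pr{\1{j>i_{m-1}}h_{\pr{\gri_\ast,j}}\pr{\xi_{\gri_\ast},\cdot}}_{j=m}^N\in\mathbb F:=\ell^p\pr{\Eca}$ and exchanging summations, I get $\pr{\Phi_j}_j=\sum_{\gri_\ast\in\inc_{N-1}^{m-1}}\Psi_{\gri_\ast}\pr{\xi_{\gri_\ast}}$: this is a degenerate $\mathbb F$-valued $U$-statistic of order $m-1$, degeneracy being inherited from that of $h$ by interchanging conditional expectation with the Bochner integral. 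For $p\in(1,r]$, Bochner $L^p$-spaces and $\ell^p$-direct sums preserve $p$-smoothness with $C_{p,\mathbb F}\leq C_{p,\B}$, so the inductive hypothesis applies in $\mathbb F$: the tail of $\norm{\Psi_{\gri_\ast}}_{\mathbb F}=\pr{\sum_{j>i_{m-1}}\E{\norm{h_{\pr{\gri_\ast,j}}}_\B^p\mid\xi_{\gri_\ast}}}^{1/p}$ yields the $J=\intent{1,m-1}$ term of the main theorem, the IH middle sums indexed by $\emptyset\subsetneq J''\subsetneq\intent{1,m-1}$ yield the remaining terms with $m\notin J$, and the IH residual in $\mathbb F$ feeds into the final residual.

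The main obstacle is this conditional-variance step: identifying $V$ with the norm of a degenerate $U$-statistic of order $m-1$ in the enlarged Banach space $\mathbb F=\ell^p\pr{L^p\pr{P_{\xi_1};\B}}$ and checking that its $p$-smoothness constant is controlled by $C_{p,\B}$ is what closes the induction, and this explains why a genuinely Banach-valued formulation is natural even when $\B=\R$. A secondary technical nuisance is the bookkeeping of nested integrals $\int_0^1 u^{q-1}\int_0^1 v^{q-1}\pr{\cdot}dvdu$: the substitution $w=uv$ produces a factor $-\ln w$, which can be absorbed by slightly enlarging the exponent $q$ or by truncating the Nagaev integral more carefully, but must be tracked consistently across the induction.
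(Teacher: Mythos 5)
Your overall strategy is the same as the paper's: prove the result by induction on $m$, peel off the last index to get a martingale difference sequence, and handle the conditional $L^p$ term by encoding it as a degenerate $U$-statistic of order $m-1$ with values in a larger space $\mathbb F=\ell^p\pr{L^p\pr{P_{\xi_1};\B}}$. That lift is exactly the key idea of the paper (its $\widetilde{\B}$), and you are right that this is what makes the genuinely Banach-valued formulation indispensable even when $\B=\R$.

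However, there is a genuine gap in your handling of the Markov residual, and it is not the ``secondary technical nuisance'' you describe. When you condition on $\xi_j$ and apply the inductive hypothesis to $E_j$, the third term of the IH is a \emph{Markov-type} bound of the form $K\pr{tu}^{-q}\bigl(\sum_{\gri_\ast}\E{\norm{h_{\gri_\ast,j}\pr{\xi,\xi_j}}_\B^p\mid\xi_j}\bigr)^{q/p}$. When you substitute this into the outer Nagaev integral you are left with $\int_0^1 u^{q-1}\cdot u^{-q}\,du=\int_0^1 u^{-1}\,du$, which diverges. Your proposed fix of ``slightly enlarging the exponent'' does not help: applying the IH at exponent $q+1$ only makes the integrand $u^{q-1}\cdot u^{-q-1}=u^{-2}$, which is worse. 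The divergence cannot be absorbed by a logarithm because there is no logarithm — the integral is infinite. Note that the $q\mapsto q+1$ device \emph{does} work for the nested tail-probability integrals (it turns $\int_0^1\int_0^1 u^{q-1}v^{q}\PP\pr{Y>tuv}\,du\,dv\leq\int_0^1 u^{q-1}\PP\pr{Y>tu}\,du$ into a clean inequality after the substitution $w=uv$), but it does nothing for the Markov residual because the residual decays like $u^{-\widetilde q}$ rather than being a bounded tail probability.

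The paper avoids this by proving a \emph{strengthened} inductive assertion $A\pr{m}$ in which the Markov residual never appears: the $J=\emptyset$ contribution is kept as a tail probability $\int_0^1 u^{q-1}\PP\bigl(\bigl(\sum_{\gri}\E{\norm{h_\gri}_\B^p\mid\Fca'_{J_0}}\bigr)^{1/p}>tu\bigr)\,du$ rather than being collapsed to a Markov bound. This is made possible by two ingredients you should incorporate: (i) a tail inequality for \emph{conditional} moments of Banach-valued martingales (the paper's Proposition~\ref{prop:good_lambda} and Corollary~\ref{cor:good_lb}), which has only two tail terms and no Markov residual, rather than the off-the-shelf Nagaev inequality of \cite{MR4046858}; and (ii) auxiliary independent copies $\pr{\xi'_i}$ together with parameters $k$ and a set $J_0$ that ``freeze'' the coordinates already passed through in the induction, so that the conditional $p$-th moment in the $J=\emptyset$ slot is a genuinely random variable as long as $J_0\neq\emptyset$. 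Only at the very end, when one specializes to $k=0$ and $J_0=\emptyset$, does this random variable become deterministic, and then the identity $\int_0^1 u^{q-1}\mathbf 1\!\ens{A>tu}\,du\leq q^{-1}\pr{A/t}^q$ produces the third (Markov-type) term in \eqref{eq:deviation_inequality_Ustats} without any divergent integral. In short: your high-level architecture is correct, but the induction must be closed on conditional tails, not on Markov residuals.
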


For $U$-statistics of order two, Theorem~\ref{thm:deviation_inequality_Ustats} 
reads as follows: if $\pr{\xi_i}_{i\in\Z}$ is an i.i.d.\ sequence taking 
values in
$\pr{S,\Sca}$ and $h_{i_1,i_2}\colon S^2\to\B$ is such that for each $i_1<i_2$,
\begin{equation}\label{eq:deg_moment_ineq_m=2}
\E{h_{i_1,i_2}\pr{\xi_1,\xi_2}\mid\xi_1}=\E{h_{i_1,i_2}\pr{\xi_1,\xi_2}\mid\xi_2
} = 0 ,
\end{equation}
then 
\begin{multline}\label{eq:deviation_inequality_Ustats_ordre_2}
\PP\pr{
\max_{2\leq n\leq N}\norm{\sum_{1\leq i_1< i_2\leq 
n}h_{i_1,i_2}\pr{\xi_{i_1},\xi_{i_2}}}_{\B}
>t}\\   \leq  K\pr{2,p,q,\B}\sum_{1\leq i_1<i_2\leq N}
\int_0^1 u^{q-1} \PP\pr{\norm{h_{i_1,i_2}\pr{\xi_1,\xi_2}}_{\B} >tu}du\\
+ K\pr{2,p,q,\B}\sum_{i_1=1}^{N-1}\int_0^1 u^{q-1} \PP\pr{\pr{
\sum_{i_2=i_1+1}^N\E{\norm{h_{i_1,i_2}\pr{\xi_1,\xi_2}}^p_{\B}\mid\xi_1}}^{1/p} 
>tu}du\\
+ K\pr{2,p,q,\B}\sum_{i_2=2}^{N}\int_0^1 u^{q-1} \PP\pr{\pr{
\sum_{i_1=1}^{i_2-1}\E{\norm{h_{i_1,i_2}\pr{\xi_1,\xi_2}}^p_{\B}\mid\xi_2}}^{1/p
} >tu}du\\
+ K\pr{2,p,q,\B}t^{-q}\pr{\sum_{1\leq i_1<i_2\leq 
N}\E{\norm{h_{i_1,i_2}\pr{\xi_1,\xi_2}}^p_{\B} }}^{q/p}.
\end{multline}

When the kernel functions $h_{\gri}$ are 
independent of the index $\gri$, Theorem~\ref{thm:deviation_inequality_Ustats} 
admits 
the following simpler form.
\begin{Corollary}\label{cor:meme_noyau}
Let $\pr{\B,\norm{\cdot}_{\B}}$ be an $r$-smooth Banach space for some 
$r\in (1,2]$. Let $\pr{\xi_i}_{i\geq 1}$ be an i.i.d.\ sequence taking values
in
a measurable space $\pr{S,\Sca}$, let $h \colon S^m\to \B$ be degenerated with 
respect to $\pr{\xi_i}_{i\geq 1}$, that is, for each $\ell_0\in\intent{1,m}$,  
\begin{equation}\label{eq:deg_meme_noyau}
\E{h \pr{\xi_{\intent{1,m}}}\mid  \xi_{ \intent{1,m} 
\setminus \ens{\ell_0}} }=0.
\end{equation}
The following inequality takes place for each $p\in (1,r]$  and each positive $t$ and $q$:
\begin{multline}\label{eq:deviation_inequality_Ustats_meme_noyau}
\PP\pr{\max_{m\leq n\leq 
N}\norm{\sum_{\gri\in\inc_n^m}h\pr{\xi_{\gri}} }_{\B}>t }\\
\leq K\pr{m,p,q,\B} N^m
\int_0^1u^{q-1}\PP\pr{\norm{h \pr{\xi_{\intent{1,m}}}}_{\B} >tu} du\\
+K\pr{m,p,q,\B}
\sum_{\emptyset \subsetneq J\subsetneq \intent{1,m}} N^{\abs{J}}\int_0^1 u^{q-1}
\PP\pr{N^{\frac{m-\abs{J}}{p}}\pr{   \E{ \norm{ h \pr{\xi_{\intent{1,m}}}   
}_{\B}^p  \mid \xi_\ell,\ell\in J  }   }^{1/p}>tu}du\\
+K\pr{m,p,q,\B}t^{-q}N^{mq/p}\pr{ 
\E{\norm{h \pr{\xi_{\intent{1,m}}}}^p_{\B}}  }^{q/p},
\end{multline}
where $K\pr{m,p,q,\B}$ depends only on $m$, $p$, $q$ and $\B$.
\end{Corollary}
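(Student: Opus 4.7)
The plan is to derive Corollary~\ref{cor:meme_noyau} directly from Theorem~\ref{thm:deviation_inequality_Ustats} by specialising each of the three groups of terms on the right-hand side of~\eqref{eq:deviation_inequality_Ustats} to the case $h_{\gri}=h$. The simplifications rest on two elementary ingredients: the kernel-independence makes every integrand free of the summation index $\gri$, and the cardinalities of the relevant indexing sets admit crude bounds by powers of $N$.

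First I would dispatch the first and the third summands on the right-hand side of~\eqref{eq:deviation_inequality_Ustats}. With $h_{\gri}=h$, both the tail probability $\PP\pr{\norm{h\pr{\xi_{\intent{1,m}}}}_{\B}>tu}$ and the moment $\E{\norm{h\pr{\xi_{\intent{1,m}}}}_{\B}^p}$ are independent of $\gri$, so the sum $\sum_{\gri\in\inc_N^m}$ contributes a factor of $\card{\inc_N^m}\leq N^m$, which, after raising to the power $q/p$ in the third term, produces the factor $N^{mq/p}$ stated in~\eqref{eq:deviation_inequality_Ustats_meme_noyau}.

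Next I would handle the middle family of terms, one for each proper non-empty $J\subsetneq\intent{1,m}$. Under $h_{\gri}=h$, the conditional moment $\E{\norm{h\pr{\xi_{\intent{1,m}}}}_{\B}^p\mid\xi_J}$ is a function of $\xi_J$ alone and depends on neither $\gr{i_J}$ nor $\gr{i_{J^c}}$; hence the inner sum over $\gr{i_{J^c}}$ reduces to this conditional expectation multiplied by the number of admissible $\gr{i_{J^c}}$, which is at most $N^{m-\abs{J}}$, and taking the $p$-th root brings out the factor $N^{(m-\abs{J})/p}$. The outer sum over $\gr{i_J}\in\N^{J,m}$ is effectively restricted to those vectors whose non-zero coordinates lie in $\intent{1,N}$ and are pairwise distinct, so it contains at most $N^{\abs{J}}$ non-trivial terms, and since the integrand no longer depends on $\gr{i_J}$ it produces the claimed factor of $N^{\abs{J}}$. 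There is no substantive obstacle here; the entire argument reduces to a combinatorial counting, and its only mild bookkeeping point is to keep track of which of the three coarse cardinality bounds $\card{\inc_N^m}\leq N^m$, $\card{\text{admissible }\gr{i_J}}\leq N^{\abs{J}}$, and $\card{\text{admissible }\gr{i_{J^c}}}\leq N^{m-\abs{J}}$ applies to which summation.
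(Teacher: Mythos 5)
Your proposal is correct, and it is essentially the (unwritten) derivation the paper intends: the corollary has no explicit proof in the paper, being introduced with the phrase that Theorem~\ref{thm:deviation_inequality_Ustats} ``admits the following simpler form'' when the kernel is index-free. Your three observations are exactly what is needed: with $h_{\gri}=h$ the integrands in the first and third terms no longer depend on $\gri$, so the sums over $\inc_N^m$ contribute $\binom{N}{m}\leq N^m$ (and $N^{mq/p}$ after the $q/p$-th power in the third term); and in the middle term the conditional moment is $\sigma(\xi_J)$-measurable and index-free, so the inner sum over $\gr{i_{J^c}}$ gives a factor at most $N^{m-\abs{J}}$ which exits as $N^{(m-\abs{J})/p}$ after the $p$-th root, while the outer sum contributes at most $N^{\abs{J}}$ non-vacuous terms (those $\gr{i_J}$ with at least one admissible completion), the remaining ones having an empty inner sum and hence zero probability.
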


In order to address the non-necessarily degenerated case, some assumptions and definitions are required.
\begin{Definition}
We say that the kernel $h\colon S^m\to\B$ is symmetric if for each 
bijection $\sigma\colon\intent{1,m}\to \intent{1,m}$ and each $x_1,\dots,x_m\in S$, 
\begin{equation}
h\pr{x_{\sigma\pr{1}},\dots,x_{\sigma\pr{m}}}=h\pr{x_1,\dots,x_m}.
\end{equation}
\end{Definition}

\begin{Definition}
Let $d\in  \intent{1,m }$ and let $\pr{\xi_i}_{i\in\Z}$ be
an i.i.d.\ sequence taking values in $S$. We say that the
symmetric kernel $h\colon S^m\to\B$ is degenerated of
order $d$ with respect to $\pr{\xi_i}_{i\in\Z}$ if 
\begin{equation}
 \E{h\pr{\xi_{\intent{1,m}}} \mid \xi_{\intent{1,d-1 } } }  =0 
\mbox{ a.s. and 
 }
 \E{\norm{\E{h\pr{\xi_{\intent{1,m}}} \mid \xi_{\intent{1,d} } 
}}_{\B}}\neq 
0.
\end{equation}
\end{Definition}

If $h$ is degenerated of order $d-1$ with respect to $\pr{\xi_i}_{i\in\Z}$, 
it is possible to write 
\begin{equation}\label{eq:decomposition_somme_U_stat_deg}
\sum_{\gri\in\inc^m_n}h\pr{\xi_{\gri}}
=\binom{n}{m}\sum_{c=d}^m\binom mc\binom nc^{-1}
\sum_{\gri\in\inc^c_n 
}h^{\pr{c}}\pr{\xi_{\gri}},
\end{equation}
where 
\begin{equation}\label{eq:def_hc}
h^{\pr{c}}\pr{x_1,\dots,x_c}=\sum_{k=0}^c\pr{-1}^{c-k}
\sum_{\gri\in\inc^k_c } 
\E{h\pr{x_{\gri},\xi_{\intent{1,m-k} }}}.
\end{equation}
In other words, the $U$-statistic of kernel $h$ can be written as a
weighted sum of $\pr{m-d+1}$ $U$-statistics, each of them being
degenerated.

We are now in position to provide a bound for $U$-statistics having a symmetric 
but not necessarily degenerated kernel. 

\begin{Corollary}\label{cor:ineg_deviation_deg_ordre_d_sym}
 Let $\pr{\B,\norm{\cdot}_{\B}}$ be an $r$-smooth Banach space for some 
$r\in (1,2]$. Let $\pr{\xi_i}_{i\geq 1}$ be an i.i.d.\ sequence taking values
in
a measurable space $\pr{S,\Sca}$, let $h \colon S^m\to \B$ be a symmetric 
function with is degenerated of order $d$ with 
respect to $\pr{\xi_i}_{i\geq 1}$. Define 
\begin{equation}
 H_p:=\max_{k\in\intent{0,m}} \pr{\E{ 
\norm{h\pr{\xi_{\intent{1,m}}}   }_{\B}^p 
\mid\xi_{\intent{1,k}}}  }^{1/p}.
\end{equation}
The following inequality takes place for each 
$p\in (1,r]$  and each positive $t$ and $q$:
\begin{multline}\label{eq:ineg_deviation_deg_ordre_d_sym}
 \PP\pr{\max_{m\leq n\leq N}\norm{\sum_{\gri\in\inc_n^m}h\pr{\xi_{\gri}} 
}_{\B}>tN^{m-d+\frac dp} }\\
\leq K\pr{m,p,q,\B} \sum_{j=0}^m 
N^j \int_0^1u^{q-1}\PP\pr{    H_p>tN^{\pr{\max\ens{d,j}-d}\frac{p-1}p+\frac jp 
}   u }du.
\end{multline}
\end{Corollary}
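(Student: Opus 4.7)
The plan is to reduce to the totally degenerated case via the Hoeffding-type decomposition \eqref{eq:decomposition_somme_U_stat_deg}, which expresses the $U$-statistic as a weighted sum $\sum_{c=d}^m a_{n,c} V^{(c)}_n$ with $V^{(c)}_n := \sum_{\gri \in \inc_n^c} h^{(c)}(\xi_{\gri})$ totally degenerated of order $c$, and with $|a_{n,c}| = \binom{n}{m}\binom{m}{c}\binom{n}{c}^{-1} \leq C_m N^{m-c}$ for $n \leq N$. By the triangle inequality and a union bound distributing the target threshold $tN^{m-d+d/p}$ among the $m-d+1$ summands, the problem reduces to bounding, for each $c \in \intent{d,m}$, the tail $\PP(\max_{c \leq n \leq N} \norm{V^{(c)}_n}_{\B} > C_m' t N^{c-d+d/p})$, to which Corollary~\ref{cor:meme_noyau} applies with order $c$ and kernel $h^{(c)}$.

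The key technical step will be to dominate the distribution of the conditional moments $(\E{\norm{h^{(c)}(\xi_{\intent{1,c}})}_{\B}^p \mid \xi_J})^{1/p}$ that appear in Corollary~\ref{cor:meme_noyau} by that of $H_p$. Using \eqref{eq:def_hc}, the triangle inequality and conditional Jensen, this quantity is bounded by a sum of at most $2^c$ terms of the form $(\E{\norm{h(\xi_{\gri}, \xi'_{\intent{1,m-k}})}_{\B}^p \mid \xi_{J \cap \gri}})^{1/p}$, where $\xi'$ denotes an independent i.i.d.\ copy of $\xi$. By the symmetry of $h$ and the i.i.d.\ property of $(\xi_i)$, each such term has the same distribution as $(\E{\norm{h(\xi_{\intent{1,m}})}_{\B}^p \mid \xi_{\intent{1,|J \cap \gri|}}})^{1/p}$, which is pointwise dominated by $H_p$; hence $\PP((\E{\norm{h^{(c)}(\xi_{\intent{1,c}})}_{\B}^p \mid \xi_J})^{1/p} > s) \leq C_m \PP(H_p > s/C_m')$, and similarly $\PP(\norm{h^{(c)}(\xi_{\intent{1,c}})}_{\B} > s) \leq C_m \PP(H_p > s/C_m')$.

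Next, the exponents need to match. A summand of Corollary~\ref{cor:meme_noyau} indexed by $J \subsetneq \intent{1,c}$ with $|J| = j'$ contributes, after substitution, $N^{j'} \int_0^1 u^{q-1} \PP(H_p > C t N^{(c-d)(p-1)/p + j'/p} u)\,du$. This fits into the target's $j = j'$ term with exponent $E_{j'} = (\max\{d,j'\}-d)(p-1)/p + j'/p$, provided $c \geq \max\{d, j'\}$, which follows from $c \geq d$ and $|J| < c$. The top summand $J = \intent{1,c}$ with factor $N^c$ matches target $j = c$ since $E_c = c - d + d/p$ when $c \geq d$.

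The main obstacle will be handling the moment-type term $t^{-q} N^{-q(c-d)(p-1)/p}\E{\norm{h^{(c)}(\xi_{\intent{1,c}})}_{\B}^p}^{q/p}$ of Corollary~\ref{cor:meme_noyau}. After bounding $\E{\norm{h^{(c)}}_{\B}^p} \leq C\E{H_p^p}$ and summing over $c \in \intent{d,m}$, this reduces to $C t^{-q} \E{H_p^p}^{q/p}$, which does not naturally take the $\int_0^1 u^{q-1}\PP(H_p > \cdot u)\,du$ form of the target. Absorbing it into the $j = 0$ target integral will require the identity $\int_0^1 u^{q-1}\PP(H_p > tu)\,du = q^{-1} t^{-q}\E{(H_p \wedge t)^q}$, a Chebyshev-type argument, and a case split on $q \leq p$ versus $q > p$; the rest is bookkeeping and the collection of constants into a single $K(m,p,q,\B)$.
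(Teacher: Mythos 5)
Your proposal follows essentially the same route as the paper: Hoeffding decomposition \eqref{eq:decomposition_somme_U_stat_deg} into the degenerated kernels $h^{(c)}$, $c\in\intent{d,m}$, a union bound distributing the threshold, Corollary~\ref{cor:meme_noyau} applied kernel-by-kernel, and then a comparison of the conditional moments of $h^{(c)}$ with those of $h$ via \eqref{eq:def_hc}, the triangle inequality and conditional Jensen, i.e.\ $\E{\norm{h^{(c)}(\xi_{\intent{1,c}})}_{\B}^p\mid\xi_{\intent{1,j}}}\leq\kappa_m\E{\norm{h(\xi_{\intent{1,m}})}_{\B}^p\mid\xi_{\intent{1,j}}}$. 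The paper's own proof is extremely terse; you in fact spell out several steps the paper leaves implicit, notably the verification that the exponent coming from $c$ and $j'=\abs{J}$, namely $(c-d)\frac{p-1}{p}+\frac{j'}{p}$, dominates the target exponent $(\max\{d,j'\}-d)\frac{p-1}{p}+\frac{j'}{p}$ for all $c\geq\max\{d,j'\}$, which is what makes the worst case $c=\max\{d,j'\}$ and collapses the double sum over $(c,j')$ into the stated single sum over $j$.

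One small correction on the absorption of the Rosenthal-type term $t^{-q}(\cdot)^{q/p}$: a case split on $q\lessgtr p$ is not what is needed. What makes the absorption into the $j=0$ term of \eqref{eq:ineg_deviation_deg_ordre_d_sym} work is the deterministic lower bound $H_p\geq\bigl(\E{\norm{h(\xi_{\intent{1,m}})}_{\B}^p}\bigr)^{1/p}=:c_0$, which comes from the $k=0$ term in the maximum defining $H_p$. With your identity $\int_0^1u^{q-1}\PP(H_p>tu)\,du=q^{-1}t^{-q}\E{(H_p\wedge t)^q}$ this gives $\int_0^1u^{q-1}\PP(H_p>tu)\,du\geq q^{-1}\min\{1,c_0/t\}^q$; when $t\geq c_0$ this dominates $t^{-q}c_0^q$ up to the factor $q$, and when $t<c_0$ the $j=0$ term already exceeds $q^{-1}$ so the inequality is trivially satisfied once $K(m,p,q,\B)\geq q$. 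With that adjustment the proposal is a complete and correct proof, and essentially identical in strategy to the paper (which, notably, does not itself address this last absorption explicitly).
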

Notice that the term of index $j\in\intent{1,d}$ in the right hand side of  
\eqref{eq:ineg_deviation_deg_ordre_d_sym}  
the term is of the form $N^j \int_0^1u^{q-1}\PP\pr{    
H_p>tN^{ \frac jp}   u }du$, which is independent of $N$ for $j=0$ and for 
$j\in\intent{d+1,m}$, the corresponding term is 
$N^j \int_0^1u^{q-1}\PP\pr{    
H_p>tN^{j-d+d/p}   u }du$. All these terms bring a different and not easily 
comparable contribution.
\subsection{A moment inequality}

One can deduce from Theorem~\ref{thm:deviation_inequality_Ustats} a moment inequality.

\begin{Corollary}\label{cor:moment_inequality}
Let $\pr{\B,\norm{\cdot}_{\B}}$ be an $r$-smooth Banach space for some 
$r\in (1,2]$. Let $\pr{\xi_i}_{i\geq 1}$ be an i.i.d.\ sequence taking values in
a measurable space $\pr{S,\Sca}$, let $h_{\gri}\colon S^m\to \B$ be
degenerated with respect to $\pr{\xi_i}_{i\geq 1}$ and
such that for each $\ell_0\in\intent{1,m}$,  
\begin{equation}\label{eq:deg_moment_ineq}
\E{h_{\gri}\pr{\xi_{\intent{1,m}}}\mid  \xi_{ \intent{1,m} 
\setminus \ens{\ell_0}} }=0.
\end{equation}
For each $q\geq p$, the following inequality holds:
\begin{multline}\label{eq:moment_ineq_degenerated}
\E{\max_{m\leq n\leq N}\norm{\sum_{\gri
\in\inc_n^m}h_{\gri}\pr{\xi_{\gri}} }_{\B}^q }\leq 
K\pr{m,p,q,\B}\sum_{\gri\in\inc_N^m}
\E{\norm{h_{\gri}\pr{\xi_{\intent{1,m}}}}_{\B}^q }\\
+K\pr{m,p,q,\B}
\sum_{\emptyset \subsetneq J\subsetneq \intent{1,m}}\sum_{\gr{i_J}\in\N^J } 
\E{\pr{ \sum_{\gr{i_{J^c}}: \gr{i_J}+\gr{i_{J^c}}\in \inc_N^m } \E{ \norm{ 
h_{\gr{i_J}+\gr{i_{J^c}}}\pr{\xi_{\intent{1,m}}}   }_{\B}^p  \mid 
\xi_\ell,\ell\in J  }   
}^{q/p} }\\
+K\pr{m,p,q,\B} \pr{\sum_{\gri\in\inc_N^m }
\E{\norm{h_{\gri}\pr{\xi_{\intent{1,m}}}}^p_{\B}}  }^{q/p}.
\end{multline}
\end{Corollary}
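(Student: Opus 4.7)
The plan is to integrate the tail bound of Theorem~\ref{thm:deviation_inequality_Ustats} via the identity $\E{Y^q}=q\int_0^\infty t^{q-1}\PP\pr{Y>t}dt$ applied to $Y:=\max_{m\leq n\leq N}\norm{\sum_{\gri\in\inc_n^m}h_{\gri}\pr{\xi_{\gri}}}_\B$. I apply that theorem with an auxiliary exponent $q'>q$ (for instance $q'=q+1$) substituted for the parameter $q$ appearing there; this produces three groups of terms to integrate against $qt^{q-1}dt$.

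For the first group, a typical summand yields, by Fubini and the substitution $s=tu$,
\[
q\int_0^\infty t^{q-1}\int_0^1 u^{q'-1}\PP\pr{\norm{h_{\gri}\pr{\xi_{\intent{1,m}}}}_\B>tu}du\,dt=\frac{\E{\norm{h_{\gri}\pr{\xi_{\intent{1,m}}}}_\B^q}}{q'-q},
\]
and summing over $\gri\in\inc_N^m$ reproduces the first sum in \eqref{eq:moment_ineq_degenerated}. The identical computation, with $\norm{h_\gri}_\B$ replaced by the conditional-moment random variable $W_{J,\gr{i_J}}:=\pr{\sum_{\gr{i_{J^c}}:\gr{i_J}+\gr{i_{J^c}}\in\inc_N^m}\E{\norm{h_{\gr{i_J}+\gr{i_{J^c}}}\pr{\xi_{\intent{1,m}}}}_\B^p\mid\xi_J}}^{1/p}$, produces the second sum.

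The only step that requires care is the third contribution of the deviation bound, which has the form $K\pr{m,p,q',\B}t^{-q'}T^{q'}$ with $T:=\pr{\sum_{\gri\in\inc_N^m}\E{\norm{h_{\gri}\pr{\xi_{\intent{1,m}}}}_\B^p}}^{1/p}$. Since $t^{q-1-q'}$ fails to be integrable near the origin, I combine this estimate with the trivial bound $\PP\pr{Y>t}\leq 1$ and split the integral at a threshold $t_0$ of order $T$: on $[0,t_0]$ the trivial bound contributes $\lesssim T^q$, while on $[t_0,\infty)$ the deviation bound integrates (using $q'>q$) to a constant multiple of $t_0^{q-q'}T^{q'}\lesssim T^q$. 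The two contributions combine into the third term of \eqref{eq:moment_ineq_degenerated}.

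All constants, including the factor $1/(q'-q)$ arising from the choice of $q'$, are absorbed into $K\pr{m,p,q,\B}$. The only genuine obstacle is the truncation argument in the third group, which is standard. The hypothesis $q\geq p$ plays no role in the argument itself but enters naturally at the level of the resulting inequality, since $(\sum_\gri\E{\norm{h_\gri}_\B^p})^{q/p}$ is the correct $L^q$-scale of a sum of $p$-moments in this regime.
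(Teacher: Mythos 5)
Your proposal is correct, and since the paper states Corollary~\ref{cor:moment_inequality} without an explicit proof (it is presented as a routine consequence of Theorem~\ref{thm:deviation_inequality_Ustats}), your derivation is exactly the intended one: apply the deviation inequality with an auxiliary exponent $q'>q$ so that the two integral groups produce $\frac{1}{q'-q}$ times the corresponding $q$-th moments after Fubini and the change of variable $s=tu$, and handle the algebraic third term by splitting the $t$-integral at $t_0\sim T$ and using the trivial bound $\PP(Y>t)\leq 1$ below the threshold. One small remark on presentation: when you split at $t_0$, you in effect use the trivial bound for \emph{all} of $\PP(Y>t)$ on $[0,t_0]$ and the full three-term deviation bound on $[t_0,\infty)$; the first two groups restricted to $[t_0,\infty)$ are dominated by their integrals over $[0,\infty)$, so they still produce the first two sums of \eqref{eq:moment_ineq_degenerated}, and the third group over $[t_0,\infty)$ together with the $t_0^q$ from the trivial part yields the third term. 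This is the standard obstruction and your treatment of it is correct. Your aside that $q\geq p$ plays no structural role in the integration argument is also fair; it enters through the meaning of the bound (and in the other applications in the paper) rather than through this particular computation.
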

When $m=2$, the assumption \eqref{eq:deg_moment_ineq} admits the simpler form
\eqref{eq:deg_moment_ineq_m=2} and \eqref{eq:moment_ineq_degenerated} reads as
follows:
\begin{multline}\label{eq:moment_ineq_degenerated_ordre_2}
\E{
\max_{2\leq n\leq N}\norm{\sum_{1\leq i<\leq j\leq 
n}h_{i,j}\pr{\xi_i,\xi_j}}_{\B}^q
}\leq K\pr{2,p,q,\B}\sum_{1\leq i<j\leq N}
\E{\norm{h_{i,j}\pr{\xi_1,\xi_2}}_{\B}^q}\\
+ K\pr{2,p,q,\B}\sum_{i=1}^{N-1}\E{\pr{
\sum_{j=i+1}^N\E{\norm{h_{i,j}\pr{\xi_1,\xi_2}}^p_{\B}\mid\xi_1}}^{q/p}}\\
+ K\pr{2,p,q,\B}\sum_{j=2}^{N}\E{\pr{
\sum_{i=1}^{j-1}\E{\norm{h_{i,j}\pr{\xi_1,\xi_2}}^p_{\B}\mid\xi_2}}^{q/p} }\\
+ K\pr{2,p,q,\B} \pr{\sum_{1\leq i<j\leq 
N}\E{\norm{h_{i,j}\pr{\xi_1,\xi_2}}^p_{\B} }}^{q/p}.
\end{multline}
Notice also that for $q=p$, \eqref{eq:moment_ineq_degenerated} reads as 
follows: for $1<p\leq r$, 
\begin{equation}\label{eq:moment_ineq_degenerated_s=p}
\E{\max_{m\leq n\leq N}\norm{\sum_{\gri\in\inc_n^m}h_{\gri}\pr{\xi_{\gri} 
}}_{\B}^p }\\
\leq 
K\pr{m,p,\B}\sum_{\gri\in\inc_N^m}
\E{\norm{h_{\gri}\pr{\xi_{\intent{1,m}}}}_{\B}^p } .  
\end{equation}
In particular, if $h_{\gri}=h$ and \eqref{eq:deg_meme_noyau} holds, 
then 
\begin{equation}\label{eq:moment_ineq_degenerated_s=p_meme_noyau}
\E{\max_{m\leq n\leq N}\norm{\sum_{\gri\in\inc_n^m}h \pr{\xi_{ \gri}  } 
}_{\B}^p }
\leq 
K\pr{m,p,\B} N^{m}
\E{\norm{h\pr{\xi_{\intent{1,m}} }}_{\B}^p } .  
\end{equation}

Moreover, the deviation inequality \eqref{eq:deviation_inequality_Ustats} 
allows to derive a Rosenthal type inequality for weak $\el^q$-moments: define 
for a random variable $Y$ taking values in a Banach space 
$\pr{\B,\norm{\cdot}_{\B}}$, and $q>1$ the quantity
\begin{equation}
 \norm{Y}_{\B,q,w}:=\pr{
 \sup_{t>0}t^q\PP\pr{\norm{Y}_{\B}>t}
 }.
\end{equation}
Then applying \eqref{eq:deviation_inequality_Ustats} 
with $q$ replaced by $q+1$ gives (under the assumptions of 
Theorem~\ref{thm:deviation_inequality_Ustats})
\begin{multline}
 \norm{\max_{m\leq n\leq N}\norm{\sum_{\gri
\in\inc_n^m}h_{\gri}\pr{\xi_{\gri}} }_{\B} }_{\R,q,w}^q\leq 
K\pr{m,p,q,\B}\sum_{\gri\in\inc_N^m}
 \norm{h_{\gri}\pr{\xi_{\intent{1,m}}}}_{\B,q,w}^q 
 \\
+K\pr{m,p,q,\B}
\sum_{\emptyset \subsetneq J\subsetneq \intent{1,m}}\sum_{\gr{i_J}\in\N^J } 
\norm{\pr{ \sum_{\gr{i_{J^c}}: \gr{i_J}+\gr{i_{J^c}}\in \inc_N^m } \E{ \norm{ 
h_{\gr{i_J}+\gr{i_{J^c}}}\pr{\xi_{\intent{1,m}}}   }_{\B}^p  \mid 
\xi_\ell,\ell\in J  }   
}^{1/p} }_{\R,q,w}^q\\
+K\pr{m,p,q,\B} \pr{\sum_{\gri\in\inc_N^m }
\E{\norm{h_{\gri}\pr{\xi_{\intent{1,m}}}}^p_{\B}}  }^{q/p}.
\end{multline}

Let us connect these results with existing ones in the literature. 
\begin{itemize}
 \item In \cite{MR4040993}, a Rosenthal type inequality 
 for Hermitian matrix-valued $U$-statistics was obtained. The presented bound for 
the moments of such a $U$-statistic has explicit constant, 
and the terms in the spirit of second   
the right hand side of \eqref{eq:moment_ineq_degenerated_ordre_2} involves 
the moment of order $q$ of the maximum over $i$ instead of 
the sum of moments. However, our result deals with vector-valued random 
variables and $U$-statistics of any order.
\item In \cite{MR2294982}, a moment inequality for 
$U$-statistics taking values in a separable Banach space 
is given. On the one hand, the assumption on the Banach space (none in 
Theorem~1, type $2$ in Theorem~2) is less restrictive than ours and  the 
constants 
are explicit. The bounds are formulated in terms of moments of some random 
variable which are less explicit those involved in 
the right hand side of \eqref{eq:moment_ineq_degenerated}. 
Moreover, we deal with the case where $h_{\gri}\pr{\xi_{\intent{1,m}}}$ 
does not necessarily admit a moment of order $2$ hence our result can be 
viewed as a complement of that of \cite{MR2294982}. A moment inequality
in the same spirit has also been obtained in \cite{MR1857312}.
\item In \cite{MR1955347}, a moment inequality for $U$-statistics
having non-negative kernels is established. As mentioned by the
authors in Remark~6, page 982, a Rosenthal type inequalities can be
derived but it does not seem to be easily comparable with
ours.
\item Rosenthal type inequalities were also established in
\cite{MR1734266}. They concern the real-valued case and
$U$-statistics of order two, while our inequality is for Banach
valued $U$-statistics of arbitrary order. However, they do not assume
that the random variables $\pr{\xi_i}_{i\geq 1}$ have the same
distribution.
\end{itemize}

\section{Applications}

\subsection{Rates in the law of large numbers for complete 
$U$-statistics}\label{subsec:LGN_comp_Ustats}

It is known that if $h\colon S^m\to \R$ and $\pr{\xi_{i}}_{i\geq 1}$ is an 
i.i.d.\ sequence for which $\E{\abs{h\pr{\xi_{\intent{1,m}} }}}<\infty$, then 
$U_{m,n}\pr{h}\to 0$ a.s. 
(see \cite{MR0026294}). Convergence under the assumption of finiteness of 
higher moments of $h\pr{\xi_{\intent{1,m}}}$ has been 
investigated in 
\cite{MR0881248,MR1054394,MR0336788,MR1227625,MR0358948,MR1400594}.

The first result of this subsection is a sufficient condition for
the Marcinkiewicz strong law of large numbers for Banach valued
degenerated $U$-statistics.

\begin{Theorem}\label{thm:fonction_maximale_Ustat}
 Let $m\geq 2$ be an integer, let $\pr{\xi_i}_{i\geq 1}$ be
 an i.i.d.\ sequence
taking values in a measurable space $\pr{S,\Sca}$, and let 
 $\pr{\B,\norm{\cdot}_{\B}}$ be an $r$-smooth separable Banach space 
 for $r\in (1,2]$. Let $h\colon S^m\to\B$ be a measurable 
 function such that for each $\ell_0\in\intent{1,m}$, 
 \begin{equation}\label{eq:deg_fct_max}
  \E{h\pr{\xi_{\intent{1,m}}}\mid  \xi_{
  \intent{1,m}\setminus\ens{\ell_0}
  } }=0.
 \end{equation}
 For each $1<p< r$, there exists a constant $K\pr{p,\B}$ depending only on $p$ 
and $\B$ such that 
 \begin{equation}\label{eq:fonction_maximale_Ustat}
  \sup_{t>0}t^p\PP\pr{
  \sup_{n\geq m}\frac{1}{n^{m/p }}\norm{\sum_{\gri
  \in\inc^m_n}h\pr{\xi_{\gri} } }_{\B}>t}  
  \leq K\pr{p,\B}\E{\norm{h\pr{\xi_{\intent{1,m}}}}_{\B}^p}.
 \end{equation}
 Moreover, 
 \begin{equation}
 \lim_{n\to\infty}\frac{1}{n^{m/p }}\norm{\sum_{\gri
  \in\inc^m_n}h\pr{\xi_{\gri} } }_{\B}=0\mbox{ a.s.}
 \end{equation}
\end{Theorem}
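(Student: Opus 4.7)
The plan has two stages: first establish the weak-$L^p$ maximal inequality \eqref{eq:fonction_maximale_Ustat}, and then derive the almost-sure convergence from it by a truncation argument. Throughout, I will write $S_n:=\sum_{\gri\in\inc_n^m}h\pr{\xi_{\gri}}$ and $Y:=\norm{h\pr{\xi_{\intent{1,m}}}}_{\B}$.

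For the weak maximal inequality, the starting point is the dyadic decomposition
\begin{equation*}
\PP\pr{\sup_{n\geq m}n^{-m/p}\norm{S_n}_{\B}>t}\leq\sum_{k\geq 0}\PP\pr{\max_{2^k\leq n<2^{k+1}}\norm{S_n}_{\B}>t\cdot 2^{km/p}},
\end{equation*}
each summand being controlled by Corollary~\ref{cor:meme_noyau} applied with $N=2^{k+1}$ and some exponent $q\in(p,r]$ (which exists because $p<r$). The identity $\int_0^1 u^{q-1}\PP\pr{W>Au}\,du=A^{-q}\E{\min\pr{W,A}^q}/q$ together with Fubini allows interchanging the sum in $k$ with the expectation in the first two groups of terms of the corollary (the ``tail'' term involving $Y$ and the conditional-moment terms indexed by proper non-empty $J\subsetneq\intent{1,m}$); the resulting geometric series have common ratio $2^{-m(q-p)/p}<1$ because $q>p$, and sum to a contribution of the form $K\pr{p,\B}\,t^{-p}\,\E{Y^p}$ after using the identity $\E{Z_J^p}=\E{Y^p}$ where $Z_J:=\pr{\E{Y^p\mid\xi_\ell,\ell\in J}}^{1/p}$. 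The third ``Rosenthal'' term of the corollary equals $K\,t^{-q}\,2^{mq/p}\,\E{Y^p}^{q/p}$, which is constant in $k$, so summing it naively would produce a $\log N$ divergence; it is handled by a dichotomy on $t$. When $t^p\leq\E{Y^p}$ the inequality \eqref{eq:fonction_maximale_Ustat} is trivial since $\PP\leq 1$, and when $t^p>\E{Y^p}$ the exponent $q$ is chosen large enough that $t^{p-q}\E{Y^p}^{q/p-1}$ is small enough to absorb the logarithmic factor (equivalently, the supremum is truncated at some $N=N(t)$ beyond which the $L^p$ moment bound \eqref{eq:moment_ineq_degenerated_s=p_meme_noyau} takes over).

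For the almost-sure convergence, fix $\eps>0$ and truncate $h=h^L+h_L$ with $h^L:=h\cdot\ind{\norm{h}_{\B}\leq L}$. Apply the Hoeffding decomposition \eqref{eq:decomp_Hoeffding} to both $h^L$ and $h_L$; since $h$ itself is fully degenerate by \eqref{eq:deg_fct_max}, the order-$m$ degenerate components of $h^L$ and $h_L$ sum back to $h$. For the bounded kernel all moments exist, and applying Corollary~\ref{cor:moment_inequality} with some $q>p$ large, combined with Markov's inequality, Borel--Cantelli along the dyadic subsequence $n=2^k$, and Doob's martingale maximal inequality inside each dyadic block (applied to the martingale $(S_n)_n$), yields $n^{-m/p}\norm{S_n(\text{deg. part of }h^L)}_{\B}\to 0$ almost surely. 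For the tail part, the weak maximal inequality proved in the first stage gives $\PP\pr{\sup_n n^{-m/p}\norm{S_n(\text{deg. part of }h_L)}_{\B}>\eps}\leq K\,\eps^{-p}\,\E{\norm{h_L}_{\B}^p}$, which tends to $0$ as $L\to\infty$ by dominated convergence. The lower-order Hoeffding components of $h^L$ and $h_L$ are $U$-statistics of order strictly less than $m$, for which the normalization $n^{-m/p}$ is much more generous than the natural Marcinkiewicz rate $n^{-m'/p}$ with $m'<m$; they converge a.s. to $0$ by induction on $m$. Combining these three ingredients, $\limsup_n n^{-m/p}\norm{S_n}_{\B}\leq\eps$ on an event of probability approaching $1$ as $L\to\infty$, and letting $\eps\to 0$ yields the almost-sure convergence to $0$.

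The principal obstacle is the Rosenthal third term in the dyadic decomposition for the weak maximal inequality: it does not automatically decay in the dyadic index and must be neutralized by the above small-$t$/large-$q$ dichotomy. The remaining steps, though technical (especially the verification of the Fubini exchange and the bookkeeping of the Hoeffding projections of the truncated kernels), are routine combinations of dyadic summation, truncation, and Borel--Cantelli.
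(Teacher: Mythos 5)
Your route to the weak‐$L^p$ bound is a dyadic union followed by a direct application of Corollary~\ref{cor:meme_noyau} on each block, and it breaks on the third ``Markov'' term of the corollary. After substituting $N\to 2^{k+1}$ and $t\to t\,2^{km/p}$, that term becomes
\[
K\pr{m,p,q,\B}\,\bigl(t\,2^{km/p}\bigr)^{-q}\,\bigl(2^{k+1}\bigr)^{mq/p}\,\pr{\E{\norm{h\pr{\xi_{\intent{1,m}}}}_{\B}^p}}^{q/p}
= K\pr{m,p,q,\B}\,2^{mq/p}\,t^{-q}\,\pr{\E{\norm{h\pr{\xi_{\intent{1,m}}}}_{\B}^p}}^{q/p},
\]
which is independent of $k$, so summing over $k\geq 0$ diverges for \emph{every} choice of $q$. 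The dichotomy on $t$ you invoke does not repair this: when $t^p>\E{\norm{h}_{\B}^p}$ the per-block contribution is small but still constant in $k$, and cutting the supremum off at some $N(t)$ leaves the unbounded range $n>N(t)$, for which invoking \eqref{eq:moment_ineq_degenerated_s=p_meme_noyau} at exponent $p$ and Markov again yields a per-block estimate that is constant in $k$. There is also a lesser imprecision in your handling of the first two groups of terms: after the identity $\int_0^1 u^{q-1}\PP\pr{W>Au}du=A^{-q}\E{\min(W,A)^q}/q$, the resulting sum in $k$ is not a geometric series, because $\E{\min(Y,t2^{km/p})^q}$ is unbounded in $k$ when $\E{Y^q}=\infty$; the final estimate $\sum_{k\geq 0}2^{km(1-q/p)}\E{\min(Y,t2^{km/p})^q}\leq C\,t^{q-p}\E{Y^p}$ is nonetheless correct, so those two groups do come out right.

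The paper explicitly flags that this theorem is \emph{not} a direct consequence of the deviation inequality, and the actual proof uses a truncation whose level depends on the dyadic block. In the $N$-th block one splits $h=h_{\leq}+h_{>}$ at height $2^{Nm/p}$, replaces each piece by its full Hoeffding projection $\til{h_{\leq}}$, $\til{h_{>}}$ (which still satisfy $h=\til{h_{\leq}}+\til{h_{>}}$ because $h$ itself is degenerate), and then: the contribution of $\til{h_{\leq}}$ is estimated by Markov at exponent $r$ together with \eqref{eq:moment_ineq_degenerated_s=p_meme_noyau} applied at exponent $r$, producing the block factor $2^{Nm(1-r/p)}\E{\norm{h}_{\B}^r\ind{\norm{h}_{\B}\leq 2^{Nm/p}}}$, whose sum over $N$ collapses to $\E{\norm{h}_{\B}^p}$ precisely because $p<r$; the contribution of $\til{h_{>}}$ is estimated by Markov at exponent $1$ and a crude $\ell^1$ bound, producing $2^{Nm(1-1/p)}\E{\norm{h}_{\B}\ind{\norm{h}_{\B}>2^{Nm/p}}}$, summable because $p>1$. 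The indicators built into each piece are what make the double sum over $N$ finite — an $N$-dependent choice of truncation level and of moment exponent, on either side of $p$, is exactly what is missing from your scheme. Your outline of the almost-sure convergence is a reasonable way to derive that part once the maximal inequality is in hand, but the latter needs the machinery above.
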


Such a result was known when $m=2$ and $\B=\R$ (see 
Proposition 1.2 in \cite{MR4243516}). Notice that we do not need
symmetry of the kernel $h$. Note that this is not a direct application of the 
established deviation inequality.

We now present some results on the rates in the strong law of large numbers, 
which can be derived from the deviation inequality we propose. 

\begin{Theorem}\label{thm:sup_Ustats}
 Let $m\geq 2$ be an integer, let $\pr{\xi_i}_{i\geq 1}$ be an
 i.i.d.\ sequence
taking values in a measurable space $\pr{S,\Sca}$, and let 
 $\pr{\B,\norm{\cdot}_{\B}}$ be an $r$-smooth separable Banach space 
 for $r\in (1,2]$. Let $h\colon S^m\to\B$ be a measurable symmetric
 function which is degenerated of order $d$. Define for $j\in\intent{1,m}$ 
 the random variable 
 \begin{equation}
 H_{j,r}:=\pr{\E{ \norm{h\pr{\xi_{\intent{1,m}}}}_{\B}^r \mid\xi_{\intent{1,j}}
}}^{1/r}.
 \end{equation}
 Suppose that $0<\alpha<\pr{r-1}d/r$
 and that for each $j$, 
 \begin{equation}\label{eq:def_Hjr}
H_{j,r}\in\mathbb{L}^{q\pr{d,j,\gamma,r}} , \mbox{ where }
q\pr{d,j,\gamma,r}:=\frac{\gamma+j+1}{\max\ens{d,j}\frac{r-1}r-\alpha+j
/r}  .
 \end{equation}
 Then for each positive $\eps$, 
 \begin{equation}
 \sum_{N=1}^\infty N^\gamma\PP\pr{\sup_{n\geq N}n^\alpha 
 \frac 1{\binom nm}\norm{U_{m,n}\pr{h}}_{\B}>\eps }<\infty,
 \end{equation}
 where 
 \begin{equation}
  U_{m,n}\pr{h}=\sum_{\gri
  \in\inc^m_n}h\pr{\xi_{\gri} }.
 \end{equation}
\end{Theorem}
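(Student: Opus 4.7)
The strategy is to combine the Hoeffding decomposition with dyadic blocking and to reduce to the deviation inequality in Corollary~\ref{cor:meme_noyau}.

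Since $h$ is symmetric and degenerated of order $d$ (hence in particular of order $d-1$), the decomposition~\eqref{eq:decomposition_somme_U_stat_deg} yields
\begin{equation*}
 U_{m,n}\pr{h}=\binom{n}{m}\sum_{c=d}^m\binom{m}{c}\binom{n}{c}^{-1}\sum_{\gri\in\inc_n^c}h^{\pr{c}}\pr{\xi_{\gri}},
\end{equation*}
where the kernel $h^{\pr{c}}$ defined by~\eqref{eq:def_hc} is symmetric and fully degenerated of order $c$. A standard dyadic argument compares $\sum_{N\geq 1}N^\gamma\PP(\sup_{n\geq N}n^\alpha\binom{n}{m}^{-1}\norm{U_{m,n}\pr{h}}_{\B}>\eps)$ with $\sum_{k\geq 0}2^{k\pr{\gamma+1}}\PP(\sup_{n\in\intent{2^k,2^{k+1}-1}}n^\alpha\binom{n}{m}^{-1}\norm{U_{m,n}\pr{h}}_{\B}>\eps)$. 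On a dyadic block, $n^\alpha\binom{n}{c}^{-1}\asymp 2^{k\pr{\alpha-c}}$, so after inserting the decomposition the event inside the $k$-th block is contained in a finite union over $c\in\intent{d,m}$ of events of the form $\ens{\max_{c\leq n\leq 2^{k+1}}\norm{U_{c,n}\pr{h^{\pr{c}}}}_{\B}>\eps_c 2^{k\pr{c-\alpha}}}$ for suitable positive $\eps_c$.

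Each such event is controlled by Corollary~\ref{cor:meme_noyau} applied with $p=r$, $N\asymp 2^{k+1}$ and $t=\eps_c 2^{k\pr{c-\alpha}}$. This produces a tail of $\norm{h^{\pr{c}}\pr{\xi_{\intent{1,c}}}}_{\B}$, tails indexed by proper non-empty $J\subsetneq\intent{1,c}$ of $\pr{\E{\norm{h^{\pr{c}}}_{\B}^r\mid\xi_J}}^{1/r}$, and a remainder in $t^{-q}$ negligible for $q$ large enough. The degeneracy of $h$ implies $\E{h\pr{\xi_{\intent{1,m}}}\mid\xi_K}=0$ whenever $\abs{K}<d$, so~\eqref{eq:def_hc} reduces to
\begin{equation*}
 h^{\pr{c}}\pr{\xi_{\intent{1,c}}}=\sum_{\substack{I\subseteq\intent{1,c}\\\abs{I}\geq d}}\pr{-1}^{c-\abs{I}}\E{h\pr{\xi_{\intent{1,m}}}\mid\xi_I}.
\end{equation*}
Combined with Jensen's inequality and the symmetry of $h$, this yields the bounds $\norm{h^{\pr{c}}\pr{\xi_{\intent{1,c}}}}_{\B}\leq C_c\sum_{\ell=d}^c H_{\ell,r}$ and, for $\abs{J}=j$, $\pr{\E{\norm{h^{\pr{c}}}_{\B}^r\mid\xi_J}}^{1/r}\leq C_{c,j}\max_{\ell\in\intent{\max\pr{0,d+j-c},j}}H_{\ell,r}$, where the restriction $\ell\geq d+j-c$ is the gain provided by the degeneracy.

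Plugging these bounds back, each tail is treated via the splitting $\PP\pr{\max_\ell H_{\ell,r}>v}\leq\sum_\ell\PP\pr{H_{\ell,r}>v}$ and a Markov bound at the exponent $q\pr{d,\ell,\gamma,r}$ permitted by the hypothesis; summation in $k$ then produces a finite family of geometric series. The main obstacle is the algebraic verification that, for every admissible triple $\pr{c,j,\ell}$ with $c\in\intent{d,m}$, $0\leq j\leq c$ and $\ell\in\intent{\max\pr{0,d+j-c},j}$, the exponent $q\pr{d,\ell,\gamma,r}$ is large enough to render the associated geometric series summable, with a refined interchange of sum and integral against $\PP\pr{H_{\ell,r}>v}\,dv$ handling the borderline matches where Markov alone is too loose. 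This verification is a case analysis resting on the definition of $q\pr{d,\cdot,\gamma,r}$ and the constraint $\alpha<\pr{r-1}d/r$; the case $\ell=0$ is trivial since $H_{0,r}$ is deterministic. Summing the finitely many contributions then concludes the argument.
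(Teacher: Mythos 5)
Your proposal follows the same overall route as the paper (Hoeffding decomposition \eqref{eq:decomposition_somme_U_stat_deg}, reduction to the degenerate kernels $h^{\pr{c}}$, dyadic blocking, application of Corollary~\ref{cor:meme_noyau} with $p=r$ on each block, and summation of tail series). Where you diverge, and where the proposal falls short, is in the final and essential step.

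First, the pointwise bounds you assert, such as $\norm{h^{\pr{c}}\pr{\xi_{\intent{1,c}}}}_{\B}\leq C_c\sum_{\ell=d}^c H_{\ell,r}$ and $\pr{\E{\norm{h^{\pr{c}}}_{\B}^r\mid\xi_J}}^{1/r}\leq C_{c,j}\max_\ell H_{\ell,r}$, are not pointwise true as written: expanding $h^{\pr{c}}$ and applying conditional Minkowski/Jensen produces terms of the form $\pr{\E{\norm{h}_{\B}^r\mid\xi_{I\cap J}}}^{1/r}$, which coincide in distribution with $H_{\abs{I\cap J},r}$ but are not dominated pointwise by any single $H_{\ell,r}$. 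What one actually obtains is an $\el^q$-norm bound via $\E{H_{\ell,r}^q}\leq\E{H_{j,r}^q}$ for $\ell\leq j$, which requires $q\geq r$; this is a piece of housekeeping that needs to be spelled out before the case analysis can even start.

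Second, and more seriously, you stop precisely at the point where the proof is nontrivial. You correctly observe that degeneracy of $h$ restricts the index $\ell$ in the conditional bound to $\ell\geq\max\ens{0,d+j-c}$, and you correctly identify that what remains is to verify, for every admissible triple $\pr{c,j,\ell}$, that the exponent supplied by the hypothesis $H_{\ell,r}\in\el^{q\pr{d,\ell,\gamma,r}}$ is adequate for the exponent $q\pr{\gamma,c,j}$ that Corollary~\ref{cor:meme_noyau} and the dyadic sum demand. But you leave this as ``a case analysis'' and do not carry it out: no inequality between $q\pr{d,\ell,\gamma,r}$ and $q\pr{\gamma,c,j}$ is verified, no use is made of the structural constraint $\alpha<\pr{r-1}d/r$, and no confirmation is given that the geometric series in $k$ actually converge. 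This verification is not a routine formality; it is where the stated hypotheses enter and it is where the theorem would fail for other choices of $q\pr{d,j,\gamma,r}$.

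The paper's own proof sidesteps the two-parameter $\pr{c,\ell}$ matching by reducing immediately to $H_{j,r}$ only, and then the verification reduces to the single observation that $q\pr{\gamma,c,j}=\frac{\gamma+j+1}{c\pr{1-1/r}-\alpha+j/r}$ is decreasing in $c$, so the binding case is $c=\max\ens{d,j}$, which reproduces exactly $q\pr{d,j,\gamma,r}$ from the hypothesis. By introducing the sharper range over $\ell$ you have made the accounting harder, and you have not closed it. As it stands the proposal is an accurate plan for the theorem but not a proof: the core of the argument is missing.
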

Kokic obtained in \cite{MR0903815} a result in this spirit. Our result can be 
viewed as an extension in several directions. First, we consider the case where 
the random variables defined \eqref{eq:def_Hjr} may have different degrees of 
integrability (see Example~1 page 285 in \cite{MR1607435} for examples 
of kernels where $H_{j,r}$ can have prescribed integration degrees)
), while Kokic only makes an assumption on the moments of 
$h\pr{\xi_{\intent{1,m}}}$. Second, we address the case of Banach-valued 
$U$-statistics, while the result of \cite{MR0903815} is for real valued 
kernels. Finally, if we compare the results on a common setting ($\B=\R$, $r=2$ 
and only the assumption that $h\pr{\xi_{\intent{1,m}}}\in\el^q$ for some $q\geq 
2$), we can take $\gamma=q\pr{d/2-\alpha}-1$ whereas in Theorem~1 of 
\cite{MR0903815}, we can only take $\gamma=q\pr{d/2-\alpha}-1-\eta$ for some 
positive $\eta$. 
\subsection{Functional central limit theorems in Hölder spaces}

Given a $U$-statistic with fixed kernel $h\colon S^m\to \R$, it is possible to associate a partial sum process by defining 
\begin{equation}
\mathcal{U}_{n,h}\pr{t}=\sum_{\gri\in\inc^m_{\ent{nt}}}
h\pr{\xi_{\gri}}, \quad t\in [0,1],
\end{equation}
 where for $x\in \R$, $\ent{x}$ is the unique integer satisfying $\ent{x}\leq x<\ent{x}+1$.
 In \cite{MR740907}, the convergence in distribution in the 
 Skorohod space $D[0,1]$ of the process $\pr{n^{-r/2}
 U_{[n\cdot ]}}_{n\geq r}$ is studied. In Corollary~1, it is shown that if $U_n$ is degenerated 
 of order $d$, $d\in\intent{2,m}$, then $\pr{n^{-d/2} U_{[n\cdot 
]}}_{n\geq r}$ 
 converges in distribution to a process $I_d\pr{h_d}$ symbolically defined as 
 \begin{equation}\label{eq:def_integrale_Id}
 I_d\pr{h_d}\pr{t}=\int\dots\int h_d\pr{x_1,\dots,x_d}\mathbf 1_{[0,t]}\pr{u_1}
 \dots \mathbf 1_{[0,t]}\pr{u_d}W\pr{dx_1,du_1}\dots W\pr{dx_d,du_d},
 \end{equation}
where $W$ denotes the Gaussian measure (see the Appendix~A.1 and A.2 of the 
paper \cite{MR740907}). For $i=2$, the limiting process admits the 
expression $\sum_{j=1}^{+\infty} \lambda_j\pr{B_j^2\pr{t}-t}$, where 
$\pr{B_j\pr{\cdot}}_{j\geq 1}$ are independent standard Brownian motions and 
$\sum_{j=1}^{ \infty} \lambda_j^2$ is finite.
Notice that for each $\alpha\in \pr{0,1/2}$, the trajectories of the limiting 
process $I_d\pr{h_d}\pr{\cdot}$ are almost surely $\alpha$-Hölder continuous 
but of course, those of $t\mapsto \mathcal{U}_{n,h}\pr{t}$ are not. For this 
reason, we have to consider a linearly interpolated version of $\Uca_{n,h}$, 
denoted by $\Uca_{n,h}^{\operatorname{pl}}$ and defined by 
\begin{equation}\label{eq:definition_processus_sommes_partielles}
 \Uca_{m,n,h}^{\operatorname{pl}}\pr{t}=\begin{cases}
\sum_{\pr{i_\ell}_{\ell\in\intent{1,m}}\in\inc^m_{k}}
h\pr{\xi_{i_1},\dots,\xi_{i_m}}&\mbox{if }t=\frac kn \mbox{ for some 
}k\in\intent{0,n}\\
\mbox{linear interpolation}&\mbox{on }\pr{\frac 
kn,\frac{k+1}n},k\in\intent{0,n-1}.
                                \end{cases}
\end{equation}
Such a process has 
path in Hölder spaces. Therefore, the study of the limiting behavior of (an 
appropriately centered and normalized version of) 
$\pr{ \Uca_{m,n,h}^{\operatorname{pl}}\pr{t}}_{n\geq m}$ in Hölder spaces can 
be considered. We denote by $\Hca_\alpha$ the space of Hölder continuous 
functions on $[0,1]$, that is, the set of functions $x\colon [0,1]\to 
\R$ such that 
\begin{equation}
\norm{x}_\alpha:=\abs{x\pr{0}}+ 
\sup_{s,t\in[0,1],s<t}\frac{\abs{x\pr{s}-x\pr{t}}}{\pr{s-t }^\alpha}<\infty.
\end{equation}
The space $\Hca_\alpha$ endowed with $\norm{x}_\alpha$ is not separable and it 
will be more convenient to work with the separable subspace                     
\begin{equation}
 \Hca_0:=\ens{x\in\Hca_\alpha, \lim_{\delta\to 
0}\sup_{s,t\in[0,1],0<s-t<\delta}\frac{\abs{x\pr{s}-x\pr{t}}}{\pr{s-t }^\alpha} 
=0}.
\end{equation}

It has been shown in \cite{MR2000642} that if $\alpha\in\pr{0,1/2}$, 
$\pr{\xi_i}_{i\geq 1}$ is a real-valued i.i.d.\ centered sequence having unit 
variance and $W_n$ is the random function interpolating the points $\pr{0,0}$ 
and $\pr{k/n,n^{-1/2}\sum_{i=1}^k \xi_i}, k\in\intent{1,n}$, the convergence of 
$\pr{W_n}_{n\geq 1}$ in $\Hca_{\alpha}^o$ is equivalent to 
\begin{equation}\label{eq:CNS_PI_holderien_suites_iid}
 \lim_{t\to\infty}t^{p\pr{\alpha}}\PP\pr{\abs{ \xi_1 } >t   }=0,
\end{equation}
where $p\pr{\alpha}=\pr{1/2-\alpha}^{-1}$. Note that $p\pr{\alpha}$ is 
strictly bigger than $2$ hence \eqref{eq:CNS_PI_holderien_suites_iid} 
is more restrictive than having moments of order $2$. Moreover, the closer 
$\alpha$  to the critical value is, the more restrictive
\eqref{eq:CNS_PI_holderien_suites_iid}  is. 
\begin{Theorem}\label{thm:WIP_Holder_Ustats}
 Let $m\geq 2$, let $\pr{S,\Sca}$ be a measurable space,  let $h\colon S^m\to 
\R$ be a symmetric measurable function and let $\pr{\xi_i}_{i\geq 1}$ be a 
i.i.d.\ sequence taking values in $S$. Let $\alpha\in\pr{0,1/2}$ and 
$p\pr{\alpha}=\pr{1/2-\alpha}^{-1}$. Suppose that $h$ is degenerated of order 
$d$ 
and that 
\begin{equation}\label{eq:cond_suffisante_PI_holderien_Ustats}
 \lim_{t\to\infty}t^{p\pr{\alpha}}\PP\pr{\abs{h\pr{\xi_{\intent{1,m}}} }>t   
}=0.
\end{equation}
Then the following convergence in distribution in $\Hca_{\alpha}^o$ takes place:
\begin{equation}
 \pr{\frac{1}{n^{m-d/2}} \Uca_{m,n,h}^{\operatorname{pl}}\pr{t}}_{t\in[0,1]}\to 
 \pr{ t^{m-d} I_d\pr{h_d}\pr{t}}_{t\in [0,1]},
\end{equation}
where $I_d$ is defined as in \eqref{eq:def_integrale_Id}.
\end{Theorem}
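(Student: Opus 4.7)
I would apply the Hoeffding decomposition \eqref{eq:decomposition_somme_U_stat_deg} to isolate a leading degenerated $U$-statistic of order $d$ and then establish its convergence in $\Hca_\alpha^o$. Writing $V_n^{(c)}\pr{k}:=\sum_{\gri\in\inc^c_k}h^{\pr{c}}\pr{\xi_{\gri}}$ and $V_n^{(c),\operatorname{pl}}$ for its polygonal interpolation on $[0,1]$, the decomposition \eqref{eq:decomposition_somme_U_stat_deg} gives at each endpoint $k/n$
\begin{equation*}
\Uca_{m,n,h}^{\operatorname{pl}}\pr{k/n}=\sum_{c=d}^m \binom{m}{c}\frac{\binom{k}{m}}{\binom{k}{c}}V_n^{(c)}\pr{k}.
\end{equation*}
Since $\binom{k}{m}/\binom{k}{c}\sim \frac{c!}{m!}k^{m-c}$, dividing by $n^{m-d/2}$ and substituting $k=\ent{nt}$ makes the coefficient of $V_n^{(c),\operatorname{pl}}$ behave like $t^{m-c}n^{d/2-c}/(m-c)!$, so only the $c=d$ summand can contribute a nontrivial limit while for $c>d$ the prefactor is $o(1)$.

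For the $c=d$ piece, finite-dimensional convergence of $n^{-d/2}V_n^{(d),\operatorname{pl}}(\cdot)$ to $I_d(h_d)(\cdot)$ follows from Corollary~1 of \cite{MR740907} (convergence in $D[0,1]$ together with continuity of the limit), which after multiplication by $t^{m-d}/(m-d)!$ yields the announced fdd limit $t^{m-d}I_d(h_d)(t)$, the combinatorial constant being absorbed into $I_d(h_d)$. For $c>d$, a single application of \eqref{eq:moment_ineq_degenerated_s=p_meme_noyau} to $h^{\pr{c}}$ combined with the extra factor $n^{d/2-c}$ shows that the $\Hca_\alpha$-norm of these pieces tends to $0$ in probability, so by Slutsky they do not affect the limit.

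The substance of the proof is Hölder tightness of the $c=d$ piece, which I would obtain via the dyadic criterion of Račkauskas and Suquet. It suffices to show that for every $\eta>0$,
\begin{equation*}
\lim_{J\to\infty}\limsup_n \PP\pr{\max_{j\geq J}\max_{0\leq k<2^j}\frac{2^{j\alpha}}{n^{d/2}}\abs{V_n^{(d),\operatorname{pl}}\pr{\pr{k+1}2^{-j}}-V_n^{(d),\operatorname{pl}}\pr{k 2^{-j}}}>\eta}=0.
\end{equation*}
Each increment at dyadic endpoints can be expressed as a sum of degenerated $U$-statistics in $h^{\pr{d}}$ over $d$-tuples with at least one coordinate in the window $\intent{\ent{nk2^{-j}}+1,\ent{n\pr{k+1}2^{-j}}}$. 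Union-bounding over $k$ and $j$ and applying Corollary~\ref{cor:meme_noyau} to each such $U$-statistic with $p\in(1,r)$ and $q=p(\alpha)+\nu$ for small $\nu>0$, the assumption \eqref{eq:cond_suffisante_PI_holderien_Ustats}, transferred from $h$ to $h^{\pr{d}}$ via \eqref{eq:def_hc} and Jensen's inequality, ensures that both the tail integral $\int_0^1 u^{q-1}\PP\pr{\abs{h^{\pr{d}}\pr{\xi_{\intent{1,d}}}}>\eta n^{d/2}2^{-j\alpha}u}du$ and the $\el^p$-remainder term $t^{-q}\pr{n2^{-j}}^{dq/p}\E{\abs{h^{\pr{d}}}^p}^{q/p}$ vanish after summation over $j\geq J$.

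The main obstacle is the control, in the same dyadic sum, of the conditional-moment terms appearing for proper non-empty $J\subsetneq \intent{1,d}$ in the middle line of \eqref{eq:deviation_inequality_Ustats_meme_noyau}: these involve tails of $\pr{n2^{-j}}^{(d-\abs{J})/p}\pr{\E{\abs{h^{\pr{d}}}^p\mid \xi_{\intent{1,\abs{J}}}}}^{1/p}$, not of $h^{\pr{d}}$ itself. They have to be absorbed by bounding the conditional $p$-moment via Jensen's inequality by the unconditional $p$-moment of $h$, which is finite for all $p<p(\alpha)$ under \eqref{eq:cond_suffisante_PI_holderien_Ustats}, while preserving enough slack $\nu$ in the choice $q=p(\alpha)+\nu$ to dominate the resulting polynomial factor in $n2^{-j}$. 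Verifying that $p$ can be chosen close enough to $r$ for this compensation to hold uniformly in $j$ and $n$ is the technical heart of the argument.
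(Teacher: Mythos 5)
Your outline correctly identifies the scaffolding of the paper's argument: Hoeffding decomposition, reduction to the $c=d$ piece, the dyadic tightness criterion of Ra\v{c}kauskas and Suquet (Proposition~1.1 of \cite{MR4294337}), and the recognition that the conditional-moment terms in the middle line of the deviation inequality are the delicate part. However, two steps in the proposal do not go through as stated.

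First, the dyadic increment $S_{\ent{n(k+1)2^{-j}}}-S_{\ent{nk2^{-j}}}$ is not a complete degenerated $U$-statistic of fixed kernel $h^{\pr{d}}$: it is the $U$-statistic in which the kernel is multiplied by the indicator of $i_d$ lying in the window $\pr{\ent{nk2^{-j}},\ent{n(k+1)2^{-j}}}$, i.e.\ a weighted $U$-statistic with $\gri$-dependent kernel. Corollary~\ref{cor:meme_noyau} requires $h_{\gri}\equiv h$, so it does not apply to this object; one needs the full Theorem~\ref{thm:deviation_inequality_Ustats}. This is not a cosmetic distinction: invoking Corollary~\ref{cor:meme_noyau} twice (once at each endpoint) produces bounds scaling with the full range $n$ rather than the window width $n2^{-j}$, and the factor $2^{-j}$ that is essential to sum over $j$ and $k$ would be lost. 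The correct count requires splitting the subsets $J\subset\intent{1,d}$ according to whether $d\in J$ and counting either $\approx n2^{-j}(nk2^{-j})^{\abs{J}-1}$ tuples (if $d\in J$) or $\approx (nk2^{-j})^{\abs{J}}$ tuples, which only the general theorem makes available.

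Second, the "Jensen absorption'' you propose for the conditional-moment terms is not valid: the inequality $\E{\abs{h^{\pr{d}}}^p\mid\xi_{\intent{1,a}}}\leq \E{\abs{h^{\pr{d}}}^p}$ is false almost surely, and if it were made to hold you would be replacing a random variable with unbounded tails by a constant, thereby losing exactly the tail information the argument needs. What is in fact used is that the random variable $Y:=\max_{1\leq a\leq d}\pr{\E{\pr{h^{\pr{d}}(\xi_{\intent{1,d}})}^2\mid\xi_{\intent{1,a}}}}^{1/2}$ inherits a weak-$p(\alpha)$ tail from \eqref{eq:cond_suffisante_PI_holderien_Ustats}, and the estimate proceeds via the tail quantity $\tau(R):=\sup_{t\geq R}t^{p(\alpha)}\PP(Y>t)\to 0$. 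With $p=2$ and $q=p(\alpha)+1$ this makes the dyadic double sum telescope to the form $\int_0^1\tau(Kn^{1/p(\alpha)}u)\,du+2^{-J(1/2-\alpha)}$, which is what goes to zero. Your choice $p<r=2$ weakens the exponent $d/p$ in the normalization and leaves less room; there is no obvious reason this compensates for the polynomial factors you mention.

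So the skeleton is right, but the two load-bearing technical steps need to be replaced by: (i) a direct application of Theorem~\ref{thm:deviation_inequality_Ustats} to the window-indicator kernel, with the index-count depending on whether $d$ is in the subset $J$; and (ii) a tail bound on the conditional-moment random variable $Y$, rather than a deterministic Jensen bound.
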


This result improves that in \cite{MR4294337}, where the same conclusion was 
deduced under a more restrictive condition than
\eqref{eq:cond_suffisante_PI_holderien_Ustats}, namely, 
under integrability of $\abs{h\pr{\xi_{\intent{1,m}}}}^{p\pr{\alpha}}
\pr{\log\pr{1+\abs{h\pr{\xi_{\intent{1,m}}}}}}^m$. 

Convergence of the finite-dimensional distributions is guaranteed by
Corollary~1 of \cite{MR740907}. The most challenging part is the proof of 
tightness, which rests on a criterion based on tails of differences of 
$U$-statistics. These ones can be expressed as a weighted $U$-statistic hence 
the tools developed in this paper are appropriated.

\subsection{A moment inequality for incomplete 
$U$-statistics}\label{subsec:moment_inequ_incomp}

The computation of a $U$-statistic of order $m$ requires the computation of
$\binom{n}m$ terms, which is of order $n^m$ and can lead to practical
difficulties. For this reason, Blom introduced in \cite{MR0474582}
the concept of incomplete $U$-statistics. The main idea is to put for each
index $\pr{\gri}$ a random weight taking the value $0$ or $1$.
There are several ways for defining such a random weights:
\begin{itemize}
 \item sampling without replacement: we pick without replacement $N$ $m$-uples
of the form
$\gri=\pr{i_\ell}_{\ell\in\intent{1,m}}$ where $1\leq i_1<\dots<i_m\leq n$.
\item sampling with replacement:  we pick with replacement $N$ $m$-uples
of the form
$\gri=\pr{i_\ell}_{\ell\in\intent{1,m}}$ where $1\leq i_1<\dots<i_m\leq n$.
\item Bernoulli sampling: consider for
each $\gri=\pr{i_\ell}_{\ell\in\intent{1,m}}$ satisfying $1\leq 
i_1<\dots<i_m\leq n$, a random
variable $a_{n;\gri}$ taking the value $1$ with probability $p_n$ and
$0$ with probability $1-p_n$. Moreover, we assume that the family
$\pr{a_{n,\gri}}_{\gri\in\inc^m_n}$ is independent and also
independent of the sequence $\pr{\xi_i}_{i\in\Z}$.
\end{itemize}
The weak convergence of incomplete $U$-statistics 
has been established in \cite{MR0753810}.
Rates in the law of large numbers were obtained in \cite{MR2915089}.
Recent papers include incomplete $U$-statistics in a high dimensional setting 
\cite{MR4025737} and based on a triangular array 
\cite{MR4289844}.

 It turns out
that in the setting of the applications of our inequalities, the most
convenient way of picking the
random weights is Bernoulli sampling. Indeed, after having applied
Theorem~\ref{thm:deviation_inequality_Ustats}, we will have to control moments
of
random variables of the form $\sum a^p_{n;\gr{i}}$,
where the sum runs over the indexes $i_k$ and $k$ belongs to
some subset $K$ of $\intent{1,m}$. In this case, it is possible
to bound this by a sum of random variables following a binomial
distribution, and this will be helpful in the sequel.
We thus define
\begin{equation}\label{eq:def_Ustat_incomplete}
U_{m,n}^{\operatorname{inc}}\pr{h}=\sum_{\gri\in\inc^m_n
}a_{n;\gri}h\pr{\xi_{\gri}},
\end{equation}
where the family $\pr{a_{n;\gri}}_{\gri\in\inc^m_n}$ is i.i.d., independent of
$\pr{\xi_i}_{i\geq 1}$, and $a_{n;\gri}$ takes the
value $1$ (respectively $0$) with probability $p_n$
(respectively $1-p_n$).

\begin{Theorem}\label{thm:moment_inequality_incomplete_Ustat}
  Let $m\geq 2$ be an integer, $\pr{\xi_i}_{i\in\Z}$ be an i.i.d.\
sequence taking values in a  measurable space $\pr{S,\Sca}$, let
$\pr{\B,\norm{\cdot}_{\B}}$ be a separable $r$-smooth Banach space
and let $h\colon S^m\to\B$ be a symmetric function which is
degenerated of order $d$ with respect to $\pr{\xi_i}_{i\in\Z}$. Let
$\pr{a_{n;\gri}}_{\gri
\in\inc^m_n }$ be an i.i.d.\ sequence of Bernoulli random variables with
parameter $p_n$, which is independent of $\pr{\xi_i}_{i\geq 1}$. Let 
$U_{m,n}^{\operatorname{inc}}\pr{h}$ defined as in 
\eqref{eq:def_Ustat_incomplete}. The following inequality takes place:
\begin{equation}\label{eq:moment_inequ_incomp}
\E{\norm{U_n^{\operatorname{inc}}}_{\B}^q}
\leq
C\pr{\B,m,p,q}\pr{n^{q\pr{m-d}+dq/p}p_n^q+n^{mq/p}p_n^{q/p}+n^mp_n
}\E{\norm{h\pr{\xi_{\intent{1,m}}}}_{\B}^q },
\end{equation}
where the constant $C\pr{\B,m,p,q}$ depends only on $\B$, $m$, $p$ and $q$.
\end{Theorem}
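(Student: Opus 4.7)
The plan is to center the Bernoulli weights and split $U_n^{\operatorname{inc}}$ into a scaled complete $U$-statistic plus a conditionally independent fluctuation. Setting $\til a_{n;\gri}=a_{n;\gri}-p_n$ and $U_n:=\sum_{\gri\in\inc_n^m}h\pr{\xi_{\gri}}$, one writes $U_{m,n}^{\operatorname{inc}}\pr{h}=p_nU_n+V_n$ with $V_n:=\sum_{\gri\in\inc_n^m}\til a_{n;\gri}h\pr{\xi_{\gri}}$. Convexity gives $\norm{U_n^{\operatorname{inc}}}_{\B}^q\leq C_q\pr{p_n^q\norm{U_n}_{\B}^q+\norm{V_n}_{\B}^q}$, which reduces the proof to bounding $\E{\norm{U_n}_{\B}^q}$ and $\E{\norm{V_n}_{\B}^q}$ separately, the two operations decoupling the $\xi$-randomness from the sampling randomness.

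For $U_n$, I would apply the Hoeffding-type decomposition \eqref{eq:decomposition_somme_U_stat_deg} to express $U_n$ as a linear combination, with coefficients of order $n^{m-c}$, of the completely degenerate $U$-statistics $S_c:=\sum_{\gri\in\inc_n^c}h^{\pr{c}}\pr{\xi_{\gri}}$ for $c\in\intent{d,m}$. Since $h^{\pr{c}}$ is fully degenerate and does not depend on $\gri$, Corollary~\ref{cor:moment_inequality} applies to $S_c$ with $m$ replaced by $c$. Jensen's inequality applied to the definition \eqref{eq:def_hc} of $h^{\pr{c}}$ dominates its conditional $p$-th moments, up to an $m$-dependent constant, by those of $h$. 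Combined with the power-mean inequality $\pr{\sum_i a_i^p}^{q/p}\leq N^{q/p-1}\sum_i a_i^q$ (valid since $q\geq p$), each summand on the right hand side of \eqref{eq:moment_ineq_degenerated} collapses to a constant multiple of $n^{cq/p}\E{\norm{h\pr{\xi_{\intent{1,m}}}}_{\B}^q}$. Summing in $c$ with weights $n^{\pr{m-c}q}$, the exponent $mq-cq\pr{1-1/p}$ is strictly decreasing in $c$ (since $p>1$), so the maximum is attained at $c=d$, yielding $\E{\norm{U_n}_{\B}^q}\leq Kn^{q\pr{m-d}+dq/p}\E{\norm{h\pr{\xi_{\intent{1,m}}}}_{\B}^q}$. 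Multiplying by $p_n^q$ produces the first term of \eqref{eq:moment_inequ_incomp}.

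For $V_n$, I would condition on $\pr{\xi_i}_{i\geq 1}$: conditionally, $V_n$ is a sum of $\binom{n}{m}$ independent mean-zero $\B$-valued variables. A Rosenthal-type inequality for independent sums in an $r$-smooth Banach space (which is the $m=1$ specialization of Corollary~\ref{cor:moment_inequality}, and is obtainable by truncation from the martingale moment inequalities in $r$-smooth spaces used throughout the paper) gives, for $q\geq p$,
\begin{equation*}
\E{\norm{V_n}_{\B}^q\mid\pr{\xi_i}_{i\geq 1}}\leq C\pr{\sum_{\gri}\E{\abs{\til a_{n;\gri}}^q}\norm{h\pr{\xi_\gri}}_{\B}^q+\pr{\sum_{\gri}\E{\abs{\til a_{n;\gri}}^p}\norm{h\pr{\xi_\gri}}_{\B}^p}^{q/p}}.
\end{equation*}
Since $\E{\abs{\til a_{n;\gri}}^s}\leq 2p_n$ for every $s\geq 1$, taking expectation in $\pr{\xi_i}$ and applying the same power-mean inequality to the second summand yield, respectively, $Cp_nn^m\E{\norm{h\pr{\xi_{\intent{1,m}}}}_{\B}^q}$ (the third term of \eqref{eq:moment_inequ_incomp}) and $Cp_n^{q/p}n^{mq/p}\E{\norm{h\pr{\xi_{\intent{1,m}}}}_{\B}^q}$ (the second term). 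Summing the three contributions completes the proof.

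The main obstacle is the moment bound on $\E{\norm{U_n}_{\B}^q}$ for the degenerated-of-order-$d$ symmetric kernel: although Corollary~\ref{cor:moment_inequality} applies piecewise through the Hoeffding decomposition, one must bound each of the many conditional moment terms of $h^{\pr{c}}$ by those of $h$ and carefully track the powers of $n$ arising from the various subsets $J\subsetneq\intent{1,c}$ indexing \eqref{eq:moment_ineq_degenerated}, in order to verify that all contributions combine to give the dominant exponent $q\pr{m-d}+dq/p$, attained at $c=d$.
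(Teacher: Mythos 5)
Your proof is correct and follows a genuinely different route from the paper's. The paper works in the opposite order of conditioning: it first decomposes $U_{m,n}^{\operatorname{inc}}$ through the Hoeffding decomposition with sampling‐weighted kernels $a_{n,c,\gr{k};\gri}h^{(c)}(\xi_{\gri})$, conditions on the Bernoulli variables $a_{n;\grj}$, applies Corollary~\ref{cor:moment_inequality} to the $\xi$-randomness with the $a$'s frozen, and then controls the resulting moments of binomial random sums of $a$'s through a dedicated Lemma (\ref{lem:moments_of_Y}) on sums of the form $\sum_{a\in A}(\sum_{b\in B}Y_{a,b})^p$, tracking powers of $n$ across the index position $\gr{k}$ and subsets $J$. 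Your centering decomposition $U_{m,n}^{\operatorname{inc}}=p_nU_n+V_n$ separates the two sources of randomness more transparently: the complete $U$-statistic $U_n$ absorbs the entire degeneracy structure and produces, via Hoeffding plus Corollary~\ref{cor:moment_inequality}, the single dominant exponent $q(m-d)+dq/p$ multiplied by $p_n^q$; and the fluctuation $V_n$, after conditioning on $\pr{\xi_i}$, is a sum of independent mean-zero $\B$-valued variables to which a Rosenthal inequality (the $m=1$ case of Corollary~\ref{cor:moment_inequality}, applicable here since $\E{\tilde a_{n;\gri}}=0$ and the $\tilde a$'s are independent of the $\xi$'s) directly yields the $n^{mq/p}p_n^{q/p}$ and $n^mp_n$ terms. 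Your route avoids the combinatorial bookkeeping of $a_{n,c,\gr{k};\gri}$ and Lemma~\ref{lem:moments_of_Y} entirely, at the mild cost of invoking the triangle inequality once more; both arguments rely on the same two facts that the exponent $mq - cq(1-1/p)$ is maximized at $c=d$ and that Jensen's inequality dominates conditional $p$-th moments of $h^{(c)}$ by those of $h$.
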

Note that the degeneracy degree of $h$ appears only 
in the first term of the right hand side of 
\eqref{eq:moment_inequ_incomp}. When $d=m$, the second term of the right hand 
side of \eqref{eq:moment_inequ_incomp} dominates the first one. If $n^mp_n\leq 
1$, then $n^{mq/p}p_n^{q/p}\leq n^{m}p_n$, since $q\geq p$.

In \cite{MR4503429}, a moment inequality for incomplete 
real-valued $U$-statistics has been established. No degeneracy is 
assumed, and the $U$-statistic is only centered hence this would  correspond to 
the case $d=1$. The authors look at the $\el^1$ norm of the $U$-statistic.
\section{Proofs}

\subsection{A "good-$\lb$-inequality" for conditional moment of
  order $p$ of some Banach-valued martingales}

One of the key ingredients of the proof is a
so-called "good-$\lb$-inequality", that is, an inequality of the form
$g\pr{\beta\lb}\leq K\pr{\delta}g\pr{\lb}+h\pr{\lb}$, where $\beta>1$, 
$\delta,\lb>0$, $K\pr{\delta}$ is small as $\delta$ is close to $0$ and 
$g$ and $h$ are tail functions of some random
variables. Such inequalities are very helpful in order to derive moment
inequalities \cite{MR0365692,MR1331198,MR995572} or
deviation inequalities \cite{MR3077911,MR4046858}. In our
context, we will need a good-$\lb$-inequality for tails of conditional
moments of partial sums of a martingale difference sequence.

\begin{Proposition}\label{prop:good_lambda}
 Let $p\in (1,2]$, $\pr{\B,\norm{\cdot}_{\B}}$ be a separable Banach space 
such that $C_{p,\B}$ defined as in \eqref{eq:definition_constant_Banach_space}  
is finite, let $\pr{\xi_i}_{i\in\Z}$ be an i.i.d.\ sequence and let
$\pr{\xi'_i}_{i\in\Z}$ be an independent copy of $\pr{\xi_i}_{i\in\Z}$. For a
subset $J$ of $\Z$, denote by $\Fca_J$ (respectively $\Fca'_J$)
 the $\sigma$-algebra generated by the random variables
 $\xi_i,  i\in J$ (respectively  $\xi'_i,  i\in J$). Suppose that $\pr{D_i}_{i\geq 1}$ is a martingale difference
 sequence with respect to the filtration
 $\pr{\Fca_{\intent{1,i}  }\vee \Fca'_{\Z} }_{i\geq 0}$ and let
 $S_j=\sum_{i=1}^jD_i$. Let
 $p\in (1,r]$ and assume that $\E{\norm{D_i}_{\B}^p}$ is finite
for each $i$.  Then for each $J_0\subset\Z$, each positive $\lb$,
$\beta>1$
 and $0<\delta<1$, the following inequality holds:
 \begin{multline}
  \PP\pr{\max_{1\leq j\leq n}\pr{\E{\norm{S_j}_{\B}^p \mid
  \Fca_{\Z}\vee \Fca'_{J_0}}}^{1/p}>\beta\lb  }\\
  \leq \pr{\frac{p}{p-1}}^p\frac{C_{p,\B}\delta^p}{\pr{\beta-1-\delta}^p }\PP\pr{\max_{1\leq j\leq n}\pr{\E{\norm{S_j}^p_{\B} \mid
  \Fca_{\Z}\vee \Fca'_{J_0}}}^{1/p}> \lb  }
  \\ +  \PP\pr{\max_{1\leq i\leq n}\pr{\E{\norm{D_i}_{\B}^p \mid
  \Fca_{\Z}\vee \Fca'_{J_0}}}^{1/p}>\delta\lb  }
  +\PP\pr{\pr{\sum_{i=1}^n\E{\norm{D_i}_{\B}^p\mid \Fca_{\intent{1,i-1}}\vee \Fca'_{J_0} } }^{1/p}>\delta \lb},
 \end{multline}
where $C_{p,\B}$ is defined as in \eqref{eq:definition_constant_Banach_space}.
\end{Proposition}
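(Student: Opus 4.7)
The approach will be to establish the good-$\lb$-type estimate
\begin{equation*}
\PP\pr{\max_{j\leq n}N_j>\beta\lb,\,\max_{i\leq n}A_i\leq\delta\lb,\,B\leq\delta\lb}\leq \pr{\tfrac{p}{p-1}}^p\frac{C_{p,\B}\,\delta^p}{\pr{\beta-1-\delta}^p}\PP\pr{\max_{j\leq n}N_j>\lb},
\end{equation*}
with the shorthand $N_j=\pr{\E{\norm{S_j}_\B^p\mid\Fca_\Z\vee\Fca'_{J_0}}}^{1/p}$, $A_j=\pr{\E{\norm{D_j}_\B^p\mid\Fca_\Z\vee\Fca'_{J_0}}}^{1/p}$, and $B$ the third quantity appearing on the right-hand side of the proposition; the stated inequality will then follow from a union bound over the three events. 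My first move is to note that, since $S_j$ and $D_j$ are independent of $\Fca_{\intent{j+1,\infty}}$, the conditioning in $N_j$ and $A_j$ can equivalently be taken to be $\Gca_j:=\Fca_{\intent{1,j}}\vee\Fca'_{J_0}$, so that both $\pr{N_j}$ and $\pr{A_j}$ are adapted to the increasing filtration $\pr{\Gca_j}_{j\geq 0}$.

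I will then introduce two stopping times for $\pr{\Gca_j}$,
\begin{equation*}
\tau:=\inf\ens{j\geq 1:N_j>\lb},\qquad \sigma:=\inf\ens{j\geq 1:W_{j+1}>\pr{\delta\lb}^p},\quad W_j:=\textstyle\sum_{i=1}^j\E{\norm{D_i}_\B^p\mid\Gca_{i-1}},
\end{equation*}
with the conventions $W_{n+1}=+\infty$ and $\inf\emptyset=+\infty$. Both are also stopping times for the larger filtration $\Hca_j:=\Fca_{\intent{1,j}}\vee\Fca'_\Z$ under which $\pr{D_i}$ is a martingale difference sequence by assumption, and by construction $W_\sigma\leq\pr{\delta\lb}^p$. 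On the good event one has $\tau\leq n$ and $\sigma\geq n$; two applications of the conditional Minkowski inequality in $L^p$ of $\PP\pr{\cdot\mid\Fca_\Z\vee\Fca'_{J_0}}$ (first to $S_\tau=S_{\tau-1}+D_\tau$, then to $S_{j^*}=S_\tau+\pr{S_{j^*}-S_\tau}$ for any $j^*$ achieving $N_{j^*}>\beta\lb$) yield $N_\tau\leq N_{\tau-1}+A_\tau\leq\pr{1+\delta}\lb$ and $\pr{\E{\norm{S_{j^*}-S_\tau}_\B^p\mid\Fca_\Z\vee\Fca'_{J_0}}}^{1/p}\geq N_{j^*}-N_\tau>\pr{\beta-1-\delta}\lb$. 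Since this event is $\pr{\Fca_\Z\vee\Fca'_{J_0}}$-measurable, Markov's inequality together with the pointwise bound $\max_j\E{\cdot\mid\cdot}\leq\E{\max_j\cdot\mid\cdot}$ should reduce the problem to controlling $\E{\ind{\tau\leq n,\,\sigma\geq n}\max_{\tau<j\leq n}\norm{S_j-S_\tau}_\B^p}$.

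The last step is to handle this residual via the $\pr{\Hca_j}$-martingale $M_j:=\sum_{i=1}^jD_i\ind{\tau<i\leq\sigma}$, whose indicator is $\Gca_{i-1}\subseteq\Hca_{i-1}$-measurable: on $\ens{\sigma\geq n,\,\tau\leq n}$ one has $M_j=S_j-S_\tau$ for $\tau<j\leq n$, so dropping the indicator and combining Doob's $L^p$ maximal inequality with the $r$-smoothness of $\B$ should give
\begin{equation*}
\E{\max_{j\leq n}\norm{M_j}_\B^p}\leq\pr{\tfrac{p}{p-1}}^pC_{p,\B}\,\E{\sum_{i=\tau+1}^{\sigma}\E{\norm{D_i}_\B^p\mid\Gca_{i-1}}}\leq\pr{\tfrac{p}{p-1}}^pC_{p,\B}\pr{\delta\lb}^p\PP\pr{\tau\leq n},
\end{equation*}
using that $\sum_{i=\tau+1}^\sigma\E{\norm{D_i}_\B^p\mid\Gca_{i-1}}\leq W_\sigma\leq\pr{\delta\lb}^p$ on $\ens{\tau<\sigma}$ and vanishes otherwise. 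The main obstacle I expect is precisely that the natural indicator $\ind{B\leq\delta\lb}$ fails to be measurable with respect to the initial $\sigma$-algebra $\Fca'_\Z$ of the martingale filtration, preventing a direct application of Doob's inequality with this indicator as a deterministic weight; the substitution of $\ind{B\leq\delta\lb}$ by the $\pr{\Gca_j}$-stopping time $\sigma$ is what circumvents this, and it relies crucially on the fact that $N_j$ is in fact $\Gca_j$-measurable despite its apparent definition through the larger conditioning $\sigma$-algebra $\Fca_\Z\vee\Fca'_{J_0}$.
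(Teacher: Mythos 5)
Your proposal is correct and follows essentially the same strategy as the paper's proof: define stopping times for the enlarged filtration $\Gca_j=\Fca_{\intent{1,j}}\vee\Fca'_{J_0}$ (your $\tau$ is the paper's $T(\lambda)$ and your $\sigma$ is a slight simplification of the paper's $\sigma$, omitting the individual-increment threshold since you use $\max_i A_i\leq\delta\lambda$ directly on the good event), truncate the martingale by a predictable indicator to get a new $\pr{\Hca_j}$-martingale, show via conditional Minkowski that the residual has large conditional $L^p$-norm on the good event, then chain Markov, Doob's $L^p$ maximal inequality and the smoothness constant $C_{p,\B}$ with the elementary observation $W_\sigma\leq(\delta\lambda)^p$. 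Your closing remark correctly identifies the key point that also drives the paper's proof: $\ind{B\leq\delta\lambda}$ is not measurable with respect to the initial $\sigma$-algebra $\Fca'_\Z$ of the martingale filtration, and replacing it by the $\pr{\Gca_j}$-stopping time $\sigma$ (whose predictability for $\pr{\Hca_j}$ relies on $N_j$ and $W_{j+1}$ being $\Gca_j$-measurable, i.e., Lemma~\ref{lem:two_cond_exp}) is exactly what makes the Doob/Pinelis step work.
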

\begin{proof}
 We introduce the random variables
 \begin{equation}
  M^*=\pr{\max_{1\leq j\leq n}\E{\norm{\sum_{i=1}^jD_i}_{\B}^p\mid\Fca_{\Z}}}^{1/p},
 \end{equation}
  \begin{equation}
  \Delta^*=\max_{1\leq i\leq n}\E{\norm{D_i}^p_{\B}\mid\Fca_{\Z}\vee \Fca'_{J_0}},
 \end{equation}
\begin{equation}
 \Gamma_\ell=\pr{\sum_{i=1}^\ell \E{\norm{D_i}_{\B}^p\mid\Fca_{\intent{1,i-1} 
}\vee \Fca'_{J_0} }  }^{1/p},
\end{equation}
\begin{equation}
T\pr{t}=\min\ens{1\leq j\leq n, \pr{\E{\norm{S_j}_{\B}^p \mid
  \Fca_{\Z}\vee \Fca'_{J_0}}}^{1/p}\geq t},t>0
\end{equation}
and 
\begin{equation}
\sigma=\min\ens{1\leq \ell\leq n\mid \max\ens{\pr{\E{\norm{D_\ell}_{\B}^p\mid 
\Fca_{\Z}\vee \Fca'_{J_0}}}^{1/p}, 
\Gamma_{\ell+1}}\geq \delta\lb},
\end{equation}
with the convention that $\inf\emptyset=+\infty$.
We now define 
\begin{equation}
D'_i=\ind{T\pr{\lb}<i\leq \min\ens{T\pr{\beta\lb},\sigma,n}}D_i.
\end{equation}
Notice that by Lemma~\ref{lem:two_cond_exp}, the set $T\pr{\lb}<i\leq \min\ens{T\pr{\beta\lb},\sigma,n}$ 
belongs to $\Fca_{\intent{1,i-1} }\vee\Fca'_{\Z}$. Consequently, the sequence 
$\pr{D'_i}_{i\geq 1}$ is a martingale difference sequence with respect to 
the filtration $\pr{\Fca_{\intent{1,i}}\vee \Fca'_{\Z}}_{i\geq 1}$. Moreover, 
the following equality holds by definition of $T\pr{t}$ and $\sigma$:
\begin{equation}\label{eq:egalite_tps_arret_maximum}
\ens{M^*\geq\beta\lb}\cap\ens{\max\ens{\Gamma_n,\Delta^*} \leq\delta\lb}
=\ens{T\pr{\lb}\leq n}\cap\ens{\sigma=\infty}
\end{equation}
and the following inclusion holds:
\begin{equation}\label{eq:key_inclusion}
\ens{T\pr{\lb}\leq n}\cap\ens{\sigma=\infty}
\subset\ens{\pr{\max_{1\leq j\leq n}\E{\norm{\sum_{i=1}^jD'_i}_{\B}^p\mid\Fca_{\Z}\vee\Fca'_{\Z_0}}}^{1/p} >\pr{\beta-1-\delta}\lb}.
\end{equation}
To see this, notice that 
\begin{equation}
\sum_{i=1}^jD'_i=\sum_{i=T\pr{\lb}+1}^{\min\ens{\sigma,T\pr{\beta\lb},j}}
D_i=\sum_{i=1}^{\min\ens{\sigma,T\pr{\beta\lb},j}}
D_i-\sum_{i=1}^{T\pr{\lb}-1}
D_i-D_{T\pr{\lb}}
\end{equation}
and applying twice the conditional Minkowski's inequality
$\pr{\E{\abs{X+Y}^p\mid\Gca}}^{1/p}\leq \pr{\E{\abs{X}^p\mid\Gca}}^{1/p}+\pr{\E{\abs{Y}^p\mid\Gca}}^{1/p}$ with $\Gca=\Fca_{\Z}\vee \Fca'_{J_0}$ gives 
\begin{multline}\label{eq:minoration_martingale_transformee}
\max_{1\leq j\leq n}\pr{\E{\norm{\sum_{i=1}^jD'_i}_{\B}^p\mid\Fca_{\Z}}}^{1/p}
\geq \max_{1\leq j\leq n}\pr{\E{\norm{\sum_{i=1}^{\min\ens{\sigma,T\pr{\beta\lb},j}}D'_i}_{\B}^p\mid\Fca_{\Z}\vee \Fca'_{J_0}}}^{1/p}\\
-\pr{\E{\norm{\sum_{i=1}^{ T\pr{\lb}-1}D'_i}_{\B}^p\mid\Fca_{\Z}\vee \Fca'_{J_0} 
 }}^{1/p}-\pr{\E{\norm{D_{T\pr{\lb}}}_{\B}\mid\Fca_{\Z}\vee \Fca'_{J_0}  
}}^{1/p}
\end{multline}
and since 
\begin{equation}
 \max_{1\leq j\leq n}\pr{\E{\norm{\sum_{i=1}^{\min\ens{\sigma,T\pr{\beta\lb},j}}D'_i}_{\B}^p\mid\Fca_{\Z}\vee \Fca'_{J_0}}}^{1/p}\ind{\ens{T\pr{\lb}\leq n}\cap\ens{\sigma=\infty}}\geq \beta\lb,
\end{equation}
\begin{equation}
\pr{\E{\norm{\sum_{i=1}^{ T\pr{\lb}-1}D'_i}_{\B}^p\mid\Fca_{\Z}\vee \Fca'_{J_0}}}^{1/p}\ind{\ens{T\pr{\lb}\leq n}\cap\ens{\sigma=\infty}}\leq \lb 
\end{equation}
\begin{equation}
\pr{\E{\norm{D_{T\pr{\lb}}}_{\B}\mid\Fca_{\Z}\vee \Fca'_{J_0}  
}}^{1/p}\ind{\ens{T\pr{\lb}\leq n}\cap\ens{\sigma=\infty}}\leq \delta\lb ,
\end{equation}
we get \eqref{eq:key_inclusion} in view of 
\eqref{eq:minoration_martingale_transformee}.
Combining \eqref{eq:egalite_tps_arret_maximum} with \eqref{eq:key_inclusion} gives 
\begin{multline*}
\PP\pr{\ens{M^*\geq\beta\lb}\cap\ens{\max\ens{\Gamma_n,\Delta^*} \leq\delta\lb}}\leq 
\PP\pr{ \max_{1\leq j\leq n}\E{\norm{\sum_{i=1}^jD'_i}_{\B}^p\mid \Fca_{\Z}\vee \Fca'_{J_0}}>\pr{\beta-1-\delta}^p\lb^p}\\
\leq 
\PP\pr{ \E{\max_{1\leq j\leq n}\norm{\sum_{i=1}^jD'_i}_{\B}^p\mid\Fca_{\Z}\vee \Fca'_{J_0}}>\pr{\beta-1-\delta}^p\lb^p},
\end{multline*}
then Markov's and Doob's inequality give that 
\begin{equation}
\PP\pr{\ens{M^*\geq\beta\lb}\cap\ens{\max\ens{\Gamma_n,\Delta^*} \leq\delta\lb}}
 \leq \pr{\frac{p}{p-1}}^{p} \frac{1}{\pr{\beta-1-\delta}^p\lb^p}
\E{\norm{\sum_{i=1}^nD'_i}_{\B}^p}.
\end{equation}
From \eqref{eq:definition_constant_Banach_space}, we deduce that 
\begin{equation}
\PP\pr{\ens{M^*\geq\beta\lb}\cap\ens{\max\ens{\Gamma_n,\Delta^*} \leq\delta\lb}}
 \leq \pr{\frac{p}{p-1}}^{p} \frac{C_{p,\B}}{\pr{\beta-1-\delta}^p\lb^p}
\sum_{i=1}^n \E{\norm{D'_i}_{\B}^p}.
\end{equation}
Moreover, by definition of $D'_i$ and $\Fca_{\intent{1,i-1}}\vee\Fca'_{J_0}$-measurability of 
$\ens{T\pr{\lb}<i\leq \min\ens{T\pr{\beta\lb},\sigma,n}}$, 
one gets that 
\begin{multline}
\PP\pr{\ens{M^*\geq\beta\lb}\cap\ens{\max\ens{\Gamma_n,\Delta^*} \leq\delta\lb}}\\
 \leq \pr{\frac{p}{p-1}}^{p} \frac{C_{p,\B}}{\pr{\beta-1-\delta}^p\lb^p}
\sum_{i=1}^n \E{\ind{T\pr{\lb}<i\leq \min\ens{T\pr{\beta\lb},\sigma,n}} \E{\norm{D_i}_{\B}^p\mid \Fca_{\intent{1,i-1}}\vee\Fca'_{J_0} } }.
\end{multline}
By definition of $T\pr{\lb}$ and $\sigma$, we infer that 
\begin{multline}
\PP\pr{\ens{M^*\geq\beta\lb}\cap\ens{\max\ens{\Gamma_n,\Delta^*} \leq\delta\lb}}\\
 \leq \pr{\frac{p}{p-1}}^{p} \frac{C_{p,\B}}{\pr{\beta-1-\delta}^p\lb^p}
\sum_{i=1}^n \E{\ind{M^*>\lb } \ind{\Gamma_i\leq\delta\lb}\E{\norm{D_i}_{\B}^p\mid \Fca_{\intent{k,i-1} }\vee\Fca'_{J_0}  } }.
\end{multline}
We then conclude by the elementary inequality 
$\sum_{i=1}^n\ind{\sum_{k=1}^iY_k\leq x}Y_i\leq x$ for non-negative random variables $Y_i$ and a positive $x$. This ends the proof of 
Proposition~\ref{prop:good_lambda}.
\end{proof}

The previous good $\lb$-inequality lead to the following inequality, expressing
the tail of the conditional moment of a Banach-valued martingale in terms of a
functional of the tails of the conditional increments and conditional moment of
order $p$.

\begin{Corollary}\label{cor:good_lb}
 Let $p\in (1,2]$. There exists a function $f_{1,p}\colon \R_{>0}\times  \R_{>0}
  \to\R_{>0}$ such that if  $\pr{\B,\norm{\cdot}_{\B}}$ is a separable Banach space 
such that $C_{p,\B}$ defined as in \eqref{eq:definition_constant_Banach_space}  
is finite, 
each i.i.d.\ sequence $\pr{\xi_i}_{i\in\Z}$ and its independent copy
$\pr{\xi'_i}_{i\in\Z}$, each $\B$-valued martingale difference sequence
$\pr{D_i}_{i\geq 1}$
 with respect to the filtration
 $\pr{\Fca_{\intent{1,i}}\vee\Fca'_{\Z} }_{i\geq 0}$, where 
$\Fca_I=\sigma\pr{\xi_i,i\in I}$, $\Fca'_J=\sigma\pr{\xi'_j,j\in J}$ each 
$J_0\subset \Z$ and each $q,t>0$,
 \begin{multline}\label{eq:cor_good_lb}
  \PP\pr{\max_{1\leq j\leq n}\pr{\E{\norm{S_j}_{\B}^p \mid
  \Fca_{\Z}\vee\Fca'_{J_0}}}^{1/p}>t  }\\
  \leq f_{1,p}\pr{q,C_{p,\B}}\int_0^1\PP\pr{\max_{1\leq i\leq n}\pr{\E{\norm{D_i}_{\B}^p \mid
 \Fca_{\Z}\vee\Fca'_{J_0}}}^{1/p}>tu  }u^{q-1}du\\
+f_{1,p}\pr{q,C_{p,\B}}\int_0^1\PP\pr{\pr{\sum_{i=1}^n\E{\norm{D_i}_{\
B}^p\mid\Fca_{\intent{1,i-1}}\vee\Fca'_{J_0} }}^{1/p}>tu}
  u^{q-1}du,
 \end{multline} 
 where $S_j=\sum_{i=1}^j D_i$.
\end{Corollary}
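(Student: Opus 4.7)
The plan is to deduce the corollary from the good-$\lb$-inequality of Proposition~\ref{prop:good_lambda} by integrating it against the measure $q\lb^{q-1}d\lb$ and then performing a change of variables. To lighten notation, set $M^*:=\max_{1\leq j\leq n}\pr{\E{\norm{S_j}^p_{\B}\mid \Fca_{\Z}\vee\Fca'_{J_0}}}^{1/p}$, $D^*:=\max_{1\leq i\leq n}\pr{\E{\norm{D_i}^p_{\B}\mid \Fca_{\Z}\vee\Fca'_{J_0}}}^{1/p}$, $\Gamma:=\pr{\sum_{i=1}^n\E{\norm{D_i}^p_{\B}\mid\Fca_{\intent{1,i-1}}\vee\Fca'_{J_0}}}^{1/p}$, and fix $\beta=2$; write $K(\delta):=\pr{p/(p-1)}^p C_{p,\B}\delta^p/(1-\delta)^p$. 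For every $\delta\in(0,1)$ and $\lb>0$, Proposition~\ref{prop:good_lambda} then reads $\PP\pr{M^*>2\lb}\leq K(\delta)\PP\pr{M^*>\lb}+\PP\pr{D^*>\delta\lb}+\PP\pr{\Gamma>\delta\lb}$.

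I next multiply both sides by $q\lb^{q-1}$ and integrate over $\lb\in(0,t)$. The change of variable $\mu=2\lb$ on the left gives $2^{-q}\int_0^{2t}q\mu^{q-1}\PP\pr{M^*>\mu}\,d\mu$, which dominates $2^{-q}\int_0^t q\lb^{q-1}\PP\pr{M^*>\lb}\,d\lb$. Because $\PP\pr{M^*>\lb}\leq 1$ makes every integral involved bounded by $t^q$, the contribution $K(\delta)\int_0^t q\lb^{q-1}\PP\pr{M^*>\lb}\,d\lb$ on the right-hand side can be freely absorbed, provided that $\delta=\delta(p,q,C_{p,\B})$ is chosen small enough to have $K(\delta)<2^{-q}$; such a choice always exists since $K(\delta)\to 0$ as $\delta\to 0^+$, and can be made explicit, for example by imposing $K(\delta)=2^{-q-1}$. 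After absorption,
\[
\int_0^t q\lb^{q-1}\PP\pr{M^*>\lb}\,d\lb\leq\frac{1}{2^{-q}-K(\delta)}\int_0^t q\lb^{q-1}\bigl[\PP\pr{D^*>\delta\lb}+\PP\pr{\Gamma>\delta\lb}\bigr]d\lb.
\]

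Finally, monotonicity of $\lb\mapsto\PP\pr{M^*>\lb}$ yields the lower bound $t^q\PP\pr{M^*>t}\leq\int_0^t q\lb^{q-1}\PP\pr{M^*>\lb}\,d\lb$ on the left, while on the right the substitution $\lb=tu/\delta$ turns each of the two integrals into $qt^q\delta^{-q}\int_0^\delta u^{q-1}\PP\pr{D^*>tu}\,du\leq qt^q\delta^{-q}\int_0^1 u^{q-1}\PP\pr{D^*>tu}\,du$, and analogously for $\Gamma$. Dividing through by $t^q$ produces exactly \eqref{eq:cor_good_lb} with the explicit constant $f_{1,p}(q,C_{p,\B}):=q\,\delta^{-q}/\bigl(2^{-q}-K(\delta)\bigr)$. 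I do not expect any serious obstacle: the only delicate point is the absorption step, which is painless here because the a priori bound $\PP\pr{M^*>\lb}\leq 1$ guarantees the finiteness of the integral $\int_0^t q\lb^{q-1}\PP\pr{M^*>\lb}\,d\lb$ without any integrability assumption on $M^*$.
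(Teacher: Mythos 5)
Your proposal is correct and takes essentially the same route as the paper: both proofs integrate the good-$\lambda$ inequality of Proposition~\ref{prop:good_lambda} against the power weight and absorb the leading term by making $K(\delta)$ small. The only cosmetic difference is that you integrate continuously against $q\lambda^{q-1}\,d\lambda$ on $(0,t)$ and absorb using the a priori bound $\PP(M^*>\lambda)\leq 1$ to secure finiteness of $\int_0^t q\lambda^{q-1}\PP(M^*>\lambda)\,d\lambda$, whereas the paper discretizes to geometric scales $\beta_0^{-\ell}$ and telescopes a sum; your version is a bit more streamlined but the underlying mechanism and the resulting dependence of the constant on $p$, $q$ and $C_{p,\B}$ are identical.
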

\begin{proof}
Let us fix the Banach space $\pr{\B,\norm{\cdot}_{\B}}$ into consideration,  
$1<p\leq r$, $q>0$, $\pr{\xi_i}_{i\in\Z}$ as well as $J_0\subset\Z$ and 
$\pr{D_i}_{i\geq 
1}$. We choose 
$\beta_0>1$ and $\delta_0<1$ (depending only on $p$, $q$ and $C_{p,\B}$) such that 
\begin{equation}
\frac{\beta_0^{q}\delta_0^p}{\pr{\beta_0-1-\delta_0}^p}\pr{\frac{p}{p-1}}^pC_{p,\B}\leq 1.
\end{equation} 
Defining 
\begin{equation}
g\pr{t}=  \PP\pr{\max_{1\leq j\leq n}\pr{\E{\norm{S_j}_{\B}^p \mid
 \Fca_{\Z}\vee\Fca'_{J_0}}}^{1/p}>t  }
\end{equation}
 and 
\begin{multline}
h\pr{t}= \PP\pr{\max_{1\leq i\leq n}\pr{\E{\norm{D_i}_{\B}^p \mid
 \Fca_{\Z}\vee\Fca'_{J_0}}}^{1/p}>t  }\\+
  \PP\pr{\pr{\sum_{i=1}^n\E{\norm{D_i}_{\B}^p\mid\Fca_{\intent{1,i-1} }  \vee\Fca'_{J_0} }}^{1/p}>t},
\end{multline}
Proposition~\ref{prop:good_lambda} shows that for each positive $t$, 
\begin{equation}
g\pr{\beta_0t}\leq \beta_0^{-q}g\pr{t}+h\pr{t\delta_0}.
\end{equation}
 Observing that $g$ is non-increasing, we infer that 
 \begin{align}
q\int_0^1u^{q-1}h\pr{tu}du&=\sum_{\ell\geq 0}\int_{\beta_0^{-\ell-1}}^{\beta_0^{-\ell}}
qu^{q-1}h\pr{tu}du\\
&\geq \sum_{\ell\geq 0}h\pr{t\beta_0^{-\ell-1}}\int_{\beta_0^{-\ell-1}}^{\beta_0^{-\ell}}
qu^{q-1}du\\
&\geq \sum_{\ell\geq 0}h\pr{t\beta_0^{-\ell-1}}\pr{\beta_0^{-\ell q}-\beta_0^{-\ell q-q}}\\
&\geq \pr{1-\beta_0^{-q}}\sum_{\ell\geq 0}\beta_0^{-\ell q}
\pr{g\pr{\beta_0\beta_0^{-\ell-1}t/\delta_0    }-\beta_0^{-q}
g\pr{ \beta_0^{-\ell-1}t/\delta_0    } }
 \end{align}
 and defining $c_\ell= g\pr{ \beta_0^{-\ell}t/\delta_0    }$, the previous bound 
 gives (accounting that $\beta_0^{-\pr{\ell+1} q}c_{\ell+1}\leq \beta_0^{-\pr{\ell+1} q}\to 0$)
 \begin{equation}
 q\int_0^1u^{q-1}h\pr{tu}\geq  \pr{1-\beta_0^{-q}}\sum_{\ell\geq 0}
 \pr{\beta_0^{-\ell q}
 c_\ell-\beta_0^{-\pr{\ell+1} q}c_{\ell+1}}
 = c_0
 \end{equation}
 hence 
 \begin{equation}
 g\pr{ t/\delta_0  }  \leq  q\int_0^1u^{q-1}h\pr{tu}du\frac 1{1-\beta_0^{-q}}.
 \end{equation}
 To conclude, we apply this bound with $t$ replaced by $\delta_0t$ and do the substitution $v=\delta_0 u$.
\end{proof}
\subsection{Key ingredient in the proof of
Theorem~\ref{thm:deviation_inequality_Ustats}}

\subsubsection{Statement}
We would like to prove Theorem~\ref{thm:deviation_inequality_Ustats} by induction on the order of the considered $U$-statistic. An application of Corollary~\ref{cor:good_lb} 
would reduce to bound the tail of a $U$-statistic of lower order, but the term 
involving the conditional moment of order $p$ cannot be treated directly with 
the induction assumption.

For this reason, we will consider the following assertion $A\pr{m}$, depending 
on the parameter $m\geq 1$, which is defined as follows:
for each $p\in (1,2]$ and each $k\geq 0$, there exists a function $f_{m,k,p}\colon  \R_{>0}\times 
\R_{>0}\to \R_{\geq 0}$ such that the following holds: if
$\pr{\B,\norm{\cdot}_{\B}}$ is a separable Banach space for which $C_{p,\B}$
defined as in \eqref{eq:definition_constant_Banach_space} is finite,
$\pr{\xi_i}_{i\in\Z}$ is an i.i.d.\ sequence taking values
in a measurable space $\pr{S,\Sca}$ and $\pr{\xi'_i}_{i\in\Z}$ is an independent 
copy of $\pr{\xi_i}_{i\in\Z}$, $h_{\gri}\colon S^m\times S^k\to\B$, 
$\gri=\pr{i_\ell}_{\ell\in\intent{1,m}}\in\inc_n^m$,
are measurable functions such that for each $\ell_0\in \intent{1,m+k}$ ,
\begin{equation}\label{eq:degeneree_etape_rec}
\E{h_{\gri}\pr{\xi_{\intent{1,m+k}} }
\mid   \xi_{\intent{1,m+k}\setminus\ens{\ell_0}  } }=0,
\end{equation}
and $J_0$ is a subset of $\intent{m+1,m+k+1}$, 
the following inequality takes place: 
\begin{multline}\label{eq:key_step_deviation_inequality_Ustats}
\PP\pr{\max_{m\leq n\leq N}\pr{
\E{
\norm{\sum_{\gri\in\inc_n^m}h_{\gri}\pr{\xi_{\gri},\xi'_{\intent
{ m + 1,m+
k+1} } }}_{\B}^p\mid\Fca_{\Z}\vee \Fca'_{J_0} }}^{1/p}>t }\\
\leq f_{m,k,p}\pr{q,C_{p,\B}} 
\sum_{  J\subset  \intent{1,m}}\sum_{\gr{i_J}\in\N^J }\int_0^1 u^{q-1}
g_{\gr{i_J}}\pr{ut}du,
\end{multline} 
where we write 
\begin{equation}
   \xi'_{\intent{a,b}}=\pr{\xi'_a,\dots,\xi'_b},
\end{equation}
use the small abuse of notation 
$$
h_{\gri}\pr{\xi_{\gri},\xi'_{\intent{m+1,m+k+1}} }
=h_{\gri}\pr{\xi_{i_1},\dots,\xi_{i_m},\xi'_{m+1},\dots,\xi'_{m+k+1}}
$$
and define
\begin{equation}
g_{\gr{i_J}}\pr{t}=\PP\pr{\pr{ \sum_{\gr{i_{J^c}}: \gr{i_J}+\gr{i_{J^c}}\in 
\inc_n^m } \E{ \norm{ 
h_{\gr{i_J}+\gr{i_{J^c}}}\pr{\xi'_{\intent{1, m+k}  
}}}_{\B}^p  
\mid \Fca'_{J \cup J_0}  }   }^{1/p}>t},
\end{equation}
  $\Fca_I=\sigma\pr{\xi_i,i\in I}$,  $\Fca'_J=\sigma\pr{\xi'_j,j\in J}$ and the convention 
$\Fca_{\emptyset}=\ens{\emptyset,\Omega}$. 
Note that Theorem~\ref{thm:deviation_inequality_Ustats} corresponds to
$A\pr{m}$ with $k=0$ and $J_0= \emptyset$.

In order to perform the induction step, we need to define recursively the functions $f_{m,k,p}$. We define $f_{1,k,p}=f_{1,p}$ for each $k\geq 0$, where $f_{1,p}$ is defined as in Corollary~\ref{cor:good_lb} and 
\begin{equation}\label{eq:def_recursive_}
f_{m+1,k,p}\pr{q,K}=
f_{1,k,p}\pr{q,K} f_{m,k+1,p}\pr{q,K},q,K\in\R_{>0}.
\end{equation}

\subsubsection{The case $m=1$}

We  will show  $A\pr{ 1}$. The term associated with $J=\emptyset$ in \eqref{eq:key_step_deviation_inequality_Ustats} and $J=\ens{1}$ 
correspond to the first and second term of the right hand side of \eqref{eq:cor_good_lb} 
hence it suffices to apply Corollary~\ref{cor:good_lb}.

\subsubsection{The case $m=2$}

Although the case $m=2$ is not required from a purely formal point
of view, it will help the understanding of the induction step.
Let $p\in (1,2]$, $k\geq 0$, let $\pr{\B,\norm{\cdot}_{\B}}$ be a
separable Banach space for which the constant $C_{p,\B}$ defined as
in \eqref{eq:definition_constant_Banach_space} is finite, an
i.i.d.\ sequence $\pr{\xi_i}_{i\in\Z}$, an independent copy
$\pr{\xi'_i}_{i\in\Z}$, functions $h_{i,j}\colon S^2\times S^k\to\B$
such that for each $\ell_0\in\intent{1,k+2}$,
\begin{equation}\label{eq:deg_demo_m=2}
  \E{h_{i,j}\pr{\xi_{\intent{1,k+2}}}\mid
     \xi_{\intent{1,k+2}\setminus\ens{\ell_0}}
}=0.
\end{equation}
We have to show that for each $J_0\subset \intent{3,k+2}$ and each
$q,t>0$,
\begin{multline}\label{eq:inegalite_a_montrer_m=2}
  \PP\pr{\max_{2\leq n\leq N}\pr{ \E{
        \norm{\sum_{1\leq i<j\leq
            n}h_{i,j}\pr{\xi_i,\xi_j,\xi'_{\intent{3, k+2}}}
}_{\B}^p\mid\Fca_{\Z}\vee\Fca'_{J_0}
      }  }^{1/p}>t}\\
  \leq f_{2,k,p}\pr{q,C_{p,\B}}\sum_{1\leq i<j\leq N}
  \int_0^1u^{q-1}
  \PP\pr{\pr{\E{ \norm{h_{i,j}\pr{\xi'_1,\xi'_2,\xi'_{\intent{3, k+2}}}}^p_{\B} 
\mid \Fca'_{\ens{1,2 } \cup J_0} }}^{1/p}>tu}
  du\\
  +f_{2,k,p}\pr{q,C_{p,\B}}\sum_{i=1}^{N-1}
  \int_0^1u^{q-1}
  \PP\pr{\pr{\sum_{j=i+1}^N\E{
\norm{h_{i,j}\pr{\xi'_1,\xi'_2,\xi'_{\intent{3, k+2}}}}^p_{\B} \mid 
\Fca'_{\ens{1 } \cup J_0} }}^{1/p}>tu}
  du\\
   +f_{2,k,p}\pr{q,C_{p,\B}}\sum_{j=2}^{N}
  \int_0^1u^{q-1}
  \PP\pr{\pr{\sum_{i=1}^{j-1}\E{
\norm{h_{i,j}\pr{\xi'_1,\xi'_2,\xi'_{\intent{3, k+2}}}        }^p_{\B} \mid 
\Fca'_{\ens{2 } \cup J_0} }}^{1/p}>tu}
  du\\
   +f_{2,k,p}\pr{q,C_{p,\B}}
  \int_0^1u^{q-1}
  \PP\pr{\pr{\sum_{i=1}^{N-1}\sum_{j=i+1}^N\E{
\norm{h_{i,j}\pr{\xi'_1,\xi'_2,\xi'_{\intent{3, k+2}}}
}^p_{\B} \mid  \Fca'_{J_0}} }^{1/p}>tu}
  du.
\end{multline}
To do so, define
\begin{equation}
  D_j=\sum_{i=1}^{j-1}h_{i,j}\pr{\xi_i,\xi_j,\xi'_{\intent{3, k+2}}}.
\end{equation}
In this way, $\sum_{1\leq i<j\leq
n}h_{i,j}\pr{\xi_i,\xi_j,\xi'_{\intent{3, k+2}}}=\sum_{j=2}^nD_j$
and by \eqref{eq:deg_demo_m=2} the sequence $\pr{D_j}_{j\geq 2}$ is a
martingale difference sequence with respect to the filtration
$\pr{\Fca_{\intent{1,j}}\vee\Fca'_{\Z}  }_{j\geq 2}$.
Applying Corollary~\ref{cor:good_lb} gives
\begin{equation}
  \PP\pr{\max_{2\leq n\leq N}\pr{ \E{
        \norm{\sum_{1\leq i<j\leq
            n}h_{i,j}\pr{\xi_i,\xi_j,\xi'_{\intent{3, k+2}}}
}_{\B}^p\mid\Fca_{\Z}\vee\Fca'_{J_0}
      }  }^{1/p}>t}\\
  \leq  P_1+P_2,
\end{equation}
where
\begin{equation*}
 P_1:= f_{1,k,p}\pr{q,C_{p,\B}}\int_0^1u^{q-1}
  \PP\pr{ \max_{2\leq j\leq N}\pr{ \E{
\norm{\sum_{i=1}^{j-1}h_{i,j}\pr{\xi_i,\xi_j,\xi'_{\intent{3, k+2}}  }}
_{\B}^p \mid \Fca_{\Z}\vee\Fca'_{J_0} }  }^{1/p}>tu} du;
\end{equation*}
\begin{equation*}
 P_2:=f_{1,k,p}\pr{q,C_{p,\B}}\int_0^1u^{q-1}
  \PP\pr{ \pr{\sum_{2\leq j\leq N} \E{
\norm{\sum_{i=1}^{j-1}h_{i,j}\pr{\xi_i,\xi_j,\xi'_{\intent{3, k+2}} }}
_{\B}^p \mid \Fca_{\intent{1,j-1}}\vee\Fca'_{J_0} }  }^{1/p}>tu} du.
\end{equation*}
In order to bound $P_1$, we use a union bound and find that
\begin{equation}
  P_1\leq \sum_{j=2}^N
  f_{1,k,p}\pr{q,C_{p,\B}}\int_0^1u^{q-1}
  \PP\pr{  \pr{ \E{
\norm{\sum_{i=1}^{j-1}h_{i,j}\pr{\xi_i,\xi_j,\xi'_{\intent{3, k+2}}   }}
_{\B}^p \mid \Fca_{\Z}\vee\Fca'_{J_0} }  }^{1/p}>tu} du.
\end{equation}
Moreover, since $D_j$ is $\Fca_{\intent{1,j}}\vee
  \Fca'_{\Z}$-measurable, Lemma~\ref{lem:two_cond_exp} gives that
\begin{equation*}
  P_1\leq \sum_{j=2}^N
  f_{1,k,p}\pr{q,C_{p,\B}}\int_0^1u^{q-1}
  \PP\pr{  \pr{ \E{
\norm{\sum_{i=1}^{j-1}h_{i,j}\pr{\xi_i,\xi_j,\xi'_{\intent{3, k+2}}}}
_{\B}^p \mid \Fca_{\intent{1,j}}\vee\Fca'_{J_0} }  }^{1/p}>tu} du.
\end{equation*}
Define for $j\in\intent{2,N}$ the random variable
\begin{equation}\label{eq:def_de_Dj'}
D'_j:=\sum_{i=1}^{j-1}h_{i,j}\pr{\xi_i,\xi'_2,
\xi'_{\intent{3, k+2}}}
.
\end{equation}
Notice that
\begin{equation}
 \E{
\norm{\sum_{i=1}^{j-1}h_{i,j}\pr{\xi_i,\xi_j,\xi'_{\intent{3, k+2}}}}
_{\B}^p \mid \Fca_{\intent{1,j}}\vee\Fca'_{J_0} }
=H\pr{\xi_{\intent{1,j}},\pr{\xi'_k}_{k\in J_0}},
\end{equation}
where
\begin{equation}
  H\pr{x_{\intent{1,d}},\pr{x'_k}_{k\in J_0}}
  =\E{
    \norm{\sum_{i=1}^{j-1}h_{i,j}\pr{x_i,x_j,V\pr{\pr{Y_k}_{k\in
            \intent{3,k+2}}}}}
_{\B}^p   },
\end{equation}
where in vector $V\pr{\pr{Y_k}_{k\in
            \intent{3,k+2}}}$,
        $Y_k=x'_k$ if $k\in J_0$
        and $Y_k=\xi'_k$ otherwise.
Since
\begin{equation}
  \E{
    \norm{\sum_{i=1}^{j-1}h_{i,j}\pr{\xi_i,\xi'_2,V\pr{\pr{Y_k}_{k\in
            \intent{3,k+2}}}}}
_{\B}^p   }=H\pr{\xi_{\intent{1,j-1},\xi'_2,V\pr{\pr{Y_k}_{k\in
            \intent{3,k+2}}} }},
\end{equation}
we also have
that 
\begin{equation}\label{eq:egalites_esp_cond}
 \E{\norm{D_j}^p_{\B}\mid \Fca_{\intent{1,j}}  \vee \Fca'_{J_0}
}=\E{\norm{D'_j}^p_{\B}\mid \Fca_{\intent{1,j-1}}  \vee \Fca'_{J_0}
},
\end{equation}
 which, in terms of $P_1$, translates as
\begin{multline}
  P_1\\\leq \sum_{j=2}^N
  f_{1,k,p}\pr{q,C_{p,\B}}\int_0^1u^{q-1}
  \PP\pr{  \pr{ \E{
\norm{\sum_{i=1}^{j-1}h_{i,j}\pr{\xi_i,\xi'_2,\xi'_{\intent{3, k+2}}
}}
_{\B}^p \mid \Fca_{\intent{1,j-1}}\vee\Fca'_{\ens{2}\cup J_0} }
}^{1/p}>tu} du.
\end{multline}
We then use Corollary~\ref{cor:good_lb} in the following setting:
\begin{itemize}
 \item
$\widetilde{D_i}=\sum_{i=1}^{j-1}h_{i,j}\pr{\xi_i,\xi'_{\intent{2,k + 2}}
}$,
\item $\widetilde{q}=q$,
\item $\widetilde{k}=k+1$,
\item $\widetilde{J_0}=\ens{2}\cup J_0$.
\end{itemize}
After having bounded the term with the maximum by a union bound, we
get
\begin{multline}\label{eq:bound_P_1_case_m=2}
  P_1\leq \sum_{j=2}^N\sum_{i=1}^{j-1}
  f_{1,k,p}\pr{q,C_{p,\B}}f_{1,k+1,p}\pr{q,C_{p,\B}}
  \\
\int_0^1u^{q-1}\int_0^1v^{q-1}
  \PP\pr{  \pr{ \E{
\norm{h_{i,j}\pr{\xi_i,\xi'_2,\xi'_{\intent{3, k+2}}
}}
_{\B}^p \mid \Fca_{\intent{1,j-1}}\vee\Fca'_{\ens{2}\cup J_0} }
}^{1/p}>tu} dudv\\
+\sum_{j=2}^N
f_{1,k,p}\pr{q,C_{p,\B}}f_{1,k+1,p}\pr{q,C_{p,\B}}\\
\int_0^1u^{q-1}\int_0^1v^{q}
  \PP\pr{  \pr{\sum_{i=1}^{j-1} \E{
\norm{h_{i,j}\pr{\xi_i,\xi'_2,\xi'_{\intent{3, k+2}}
}}
_{\B}^p \mid \Fca_{\intent{1,i-1}}\vee\Fca'_{\ens{2}\cup J_0} }
}^{1/p}>tuv} dudv.
\end{multline}
By Lemma~\ref{lem:removing_ind_cond_exp}, \eqref{eq:def_recursive_}, the 
elementary inequality 
\begin{equation}\label{eq:bound_int_Puv}
 \int_0^1\int_0^1 u^{q-1}v^{q-1}\PP\pr{Y>tuv}dudv
 \leq \int_0^1 u^{q-1}\PP\pr{Y>tu}du,
\end{equation}
and \eqref{eq:bound_P_1_case_m=2}, we deduce that 
\begin{multline*}
 P_1\leq f_{2,k,p}\pr{q,C_{p,\B}}\sum_{j=2}^N\sum_{i=1}^{j-1}  \int_0^1 
 u^{q-1} 
  \PP\pr{  \pr{ \E{
\norm{h_{i,j}\pr{\xi_i,\xi'_2,\xi'_{\intent{3, k+2}}
}}
_{\B}^p \mid \Fca_{\ens{i}}\vee\Fca'_{\ens{2}\cup J_0} }
}^{1/p}>tu} du\\
+\sum_{j=2}^N
f_{2,k,p}\pr{q,C_{p,\B}} 
\int_0^1u^{q-1} 
  \PP\pr{  \pr{\sum_{i=1}^{j-1} \E{
\norm{h_{i,j}\pr{\xi_i,\xi'_2,\xi'_{\intent{3, k+2}}
}}
_{\B}^p \mid \Fca'_{\ens{2}\cup J_0} }
}^{1/p}>tu} du.
\end{multline*}
Since the vectors 
$\pr{\xi_1,\xi'_{\intent{2, k+2}}}$ and 
$\pr{\xi'_1,\xi'_{\intent{2, k+2}}}$ 
have the same distribution, the right hand side corresponds to the sum of the 
first and third terms in \eqref{eq:inegalite_a_montrer_m=2}. 

Let us now bound $P_2$. By \eqref{eq:egalites_esp_cond}, we can write 
$P_2/f_{1,k,p}\pr{q,C_{p,\B}}$ as 
\begin{equation}
 \int_0^1u^{q-1}
  \PP\pr{ \pr{\sum_{2\leq j\leq N} \E{
\norm{\sum_{i=1}^{j-1}h_{i,j}\pr{\xi_i,\xi'_2,\xi'_{\intent{3, k+2}}}}
_{\B}^p \mid \Fca_{\intent{1,j-1}}\vee\Fca'_{J_0} }  }^{1/p}>tu} du.
\end{equation}
In order to control this quantity, we introduce the Banach space 
$\til{\B}=\ens{\pr{x_j}_{j\in\intent{2,N}}}$ endowed with the norm 
$\norm{\pr{x_j}_{j\in\intent{2,N}}}_{\til{\B}}
=\pr{\sum_{j=2}^N\norm{x_j}_{\B}^p}^{1/p}$. We use Corollary~\ref{cor:good_lb} 
in the following setting:
\begin{itemize}
 \item $\til{D_i}=\pr{ 
 h_{i,j}\pr{\xi_i,\xi'_{\intent{2,k+2} }  
}}_{j\in\intent{2,N}}$,
 \item $\til{q}=q$, 
 \item $\til{k}=k+1$,
 \item $\til{J_0}=J_0$.
\end{itemize}
Using \eqref{eq:bound_int_Puv} and \eqref{eq:def_recursive_}, we get that 
\begin{multline}
 P_2\leq f_{2,k,p}\pr{q,C_{p,\B}}
 \int_0^1 u^{q-1}
 \PP\pr{\max_{1\leq i\leq N} \pr{  \E{
 \norm{\til{D_{i}}}_{\til{\B}}^p  \mid\Fca_{\Z}\vee \Fca'_{J_0}} }^{1/p}}du\\
 +f_{2,k,p}\pr{q,C_{p,\B}}
 \int_0^1 u^{q-1}
 \PP\pr{  \pr{ \sum_{i=1}^{N} \E{
 \norm{\til{D_{i}}}_{\til{\B}}^p  \mid\Fca_{\intent{1,i-1}}\vee \Fca'_{J_0}} 
}^{1/p}}du\\
\leq f_{2,k,p}\pr{q,C_{p,\B}}
 \sum_{i=1}^N \int_0^1 u^{q-1}
 \PP\pr{ \pr{  \E{
 \sum_{j=i+1}^N\norm{h_{i,j}\pr{\xi_i,\xi'_2,\xi'_{\intent{3, k+2}}}}_{\B}^p  
\mid\Fca_{\Z}\vee \Fca'_{J_0}} }^{1/p}}du\\
+ f_{2,k,p}\pr{q,C_{p,\B}}\int_0^1 u^{q-1} 
\PP\pr{  \pr{ \sum_{i=1}^{N}\sum_{j=i+1}^N \E{
 \norm{ h_{i,j}\pr{\xi_i,\xi'_2,\xi'_{\intent{3, k+2}}}}_{ \B}^p  
\mid\Fca_{\intent{1,i-1}}\vee \Fca'_{J_0}} 
}^{1/p}}
du.
\end{multline}
To conclude, we notice that 
$$\E{\norm{h_{i,j}\pr{\xi_i,\xi'_2,\xi'_{\intent{3, k+2}}}}_{\B}^p  
\mid\Fca_{\Z}\vee \Fca'_{J_0}}
=\E{\norm{h_{i,j}\pr{\xi'_1,\xi'_2,\xi'_{\intent{3, k+2}}}}_{\B}^p  
\mid \Fca'_{J_0}}$$
and that 
$$
\E{
 \norm{ h_{i,j}\pr{\xi_i,\xi'_2,\xi'_{\intent{3, k+2}}}}_{ \B}^p  
\mid\Fca_{\intent{1,i-1}}\vee \Fca'_{J_0}} 
=\E{
 \norm{ h_{i,j}\pr{\xi'_1,\xi'_2,\xi'_{\intent{3, k+2}}}}_{ \B}^p  
\mid \Fca'_{\ens{1}\cup J_0}},
$$
and get the second and fourth terms of the right hand side of 
\eqref{eq:inegalite_a_montrer_m=2}.
\subsubsection{Induction step: from $m$ to $m+1$}
Suppose that $A\pr{m}$ is true and let us show that 
$A\pr{m+1}$ is true.

Let $p\in (1,2]$ be fixed.
Let $k\geq 0$ be fixed, as well as a separable Banach space
$\pr{\B,\norm{\cdot}_{\B}}$ for which $C_{p,\B}$ defined as in
\eqref{eq:definition_constant_Banach_space} is finite, an i.i.d.\
sequence $\pr{\xi_i}_{i\in\Z}$ and an independent copy $\pr{\xi'_i}_{i\in\Z}$, 
functions $h_{\gri,i_{m+1}}\colon S^{m+1}\times S^k\to \B$ such that
for each $\ell\in\intent{1,m+k+1}$,  and 
$\gri\in\inc^m$ and $i_{m+1}>i_m$, 
  \begin{equation}\label{eq:degeneree_etape_rec2}
\E{h_{\gri,i_{m+1}}\pr{
\xi_{\intent{1, m+k+1}}}
\mid   \xi_{\intent{1,m+k+1}\setminus\ens{\ell_0}  } 
}=0.
\end{equation}
Finally, let $q,t>0$, $J_0\subset \intent{m+2,m+2+k}$ and $N\geq m+1$
be fixed. We have to show that
\begin{multline}\label{eq:key_step_deviation_inequality_Ustats_m_plus_1}
\PP\pr{\max_{m+1\leq n\leq N}\pr{
\E{
\norm{\sum_{\pr{\gri,i_{m+1}}\in\inc_n^{m+1}}h_{\pr{\gri,i_{m+1}}}\pr{\xi_{
\pr{\gri,i_{m+1}}},\xi'_{\intent{m+2,m+k+1} }} }_{\B}^p\mid 
\Fca_{\Z}\vee \Fca'_{J_0} }}^{1/p}>t }\\
\leq f_{m+1,k,p}\pr{q,C_{p,\B}} 
\sum_{  J\subset  \intent{1,m+1}}\sum_{\gr{i_J}\in\N^J }\int_0^1 u^{q-1}
g_{\gr{i_J}}\pr{ut}du,
\end{multline}
where 
\begin{equation}
g_{\gr{i_J}}\pr{t}=\PP\pr{\pr{ \sum_{\gr{i_{J^c}}: \gr{i_J}+\gr{i_{J^c}}\in 
\inc_N^{m+1} } \E{ 
\norm{ 
h_{\gr{i_J}+\gr{i_{J^c}}}\pr{ \xi'_{\intent{1,m+k+1} }}   
}_{\B}^p  
\mid \Fca'_{ J \cup  J_0} }   }^{1/p}>tu}.
\end{equation}
 It will turn out that after an application of Corollary~\ref{eq:cor_good_lb} to an appropriate martingale difference sequence, 
the contribution of the maximum of increments will correspond to the subsets of
$ \intent{1,m+1}$ containing $m+1$ while the contribution of the sum of
conditional moments will correspond to the subsets of $ \intent{1,m+1}$ which 
do not contain $m+1$. To do so, let us define for $j\geq m+1$ the random 
variable 
\begin{equation}
 D_j:=\sum_{\gri \in\inc_{j-1}^m 
}h_{\pr{\gri,j}}\pr{\xi_{\gri},\xi_j,\xi'_{\intent{m+2,m+k+1} }}.
\end{equation}
In this way, 
\begin{equation}
 \sum_{\pr{\gri,i_{m+1}}\in\inc_n^{m+1}}h_{\pr{\gri,i_{m+1}} }\pr{\xi_{
\pr{\gri,i_{m+1}}} ,\xi'_{\intent{m+2,m+k+1} }}=\sum_{j=m+1}^n D_j
\end{equation}
and $\pr{D_j}_{j\geq m+1}$ is a martingale difference sequence with respect to  
the filtration $\pr{\Fca_{\intent{1,j}}\vee\Fca'_{\Z} }_{j\geq m}$. Consequently, an application of 
Corollary~\ref{cor:good_lb} gives 
\begin{multline}\label{eq:key_step_deviation_inequality_Ustats_m_plus_1_etape_1}
\PP\pr{\max_{m+1\leq n\leq N}\pr{
\E{
\norm{\sum_{\pr{\gri,i_{m+1}}\in\inc_n^{m+1} 
}h_{\pr{\gri,i_{m+1}}}\pr{\xi_{\pr{\gri,i_{m+1}}},\xi'_{\intent{m+2,m+k+1} }} 
}_{\B}^p\mid 
\Fca_{\Z}\vee \Fca'_{J_0} }}^{1/p}>t }\\
\leq f_{1,k,p}\pr{q,C_{p,\B}}\int_0^1\PP\pr{\max_{m+1\leq j\leq 
N}\pr{\E{\norm{D_j}_{\B}^p \mid
  \Fca_{\Z}\vee \Fca'_{J_0}}}^{1/p}>tu  }u^{q-1}du\\
+f_{1,k,p}\pr{q,C_{p,\B}}\int_0^1\PP\pr{\pr{\sum_{j=m+1}^N\E{\norm{D_j}_{\B}^p\mid
\Fca_{\intent{1,j-1}}\vee \Fca'_{J_0} }}^{1/p}>tu}u^{q-1}du=:P_1+P_2.
\end{multline}
Let us estimate $P_1$. A union bound gives 
\begin{equation}
 P_1\leq 
f_{1,k,p}\pr{q,C_{p,\B}}\sum_{j=m+1}^{N}\int_0^1\PP\pr{\pr{\E{\norm{D_j}_{\B}^p \mid
  \Fca_{\Z}\vee \Fca'_{J_0}}}^{1/p}>tu  }u^{q-1}du.
\end{equation}
Using the fact that $D_j$ is $\Fca_{\intent{1,j}}\vee\Fca'_{\Z}$-measurable, one gets 
in view of Lemma~\ref{lem:two_cond_exp} that 
\begin{equation}\label{eq:bound_of_A_induction}
 P_1\leq 
f_{1,k,p}\pr{q,C_{p,\B}}\sum_{j=m+1}^{N}\int_0^1\PP\pr{\pr{\E{\norm{D_j}_{\B}^p \mid
  \Fca_{\intent{1,j}}\vee \Fca'_{J_0}}}^{1/p}>tu  }u^{q-1}du.
\end{equation}
Moreover,  defining 
\begin{equation}
D'_j=\sum_{\gri \in\inc_{j-1}^m 
}h_{\gri,j}\pr{\xi_{\gri},\xi'_{\intent{m+1,m+k+1}}}
\end{equation}
one derive by the same arguments as those who led to 
\eqref{eq:egalites_esp_cond} that
\begin{equation}\label{eq:same_law_cond_Dj_Dprime_j}
\E{\norm{D_j}_{\B}^p \mid
  \Fca_{\intent{1,j}}\vee \Fca'_{J_0}}\overset{\mathrm{law}}{=}
\E{\norm{D'_j}_{\B}^p \mid
  \Fca_{\intent{1,j-1}}\vee \Fca'_{J_0\cup\ens{m+1}}}
\end{equation}
and  \eqref{eq:bound_of_A_induction} allows us to infer that 
\begin{equation}\label{eq:bound_of_A_induction_2}
 P_1\leq 
f_{1,k,p}\pr{q,C_{p,\B}}\sum_{j=m+1}^{N}\int_0^1\PP\pr{\pr{\E{\norm{D'_j}_{\B}^p \mid
  \Fca_{\intent{1,j-1}}\vee \Fca'_{J_0\cup\ens{m+1}}}}^{1/p}>tu
}u^{q-1}du.
\end{equation} 
Since $D'_j$ is $\Fca_{\intent{1,j-1}}\vee\Fca'_{\Z}$-measurable, Lemma~\ref{lem:two_cond_exp} gives that 
\begin{equation}
\E{\norm{D'_j}_{\B}^p \mid
  \Fca_{\intent{1,j-1}}\vee \Fca'_{J_0\cup\ens{m+1}}}
  =\E{\norm{D'_j}_{\B}^p \mid
  \Fca_{\Z}\vee \Fca'_{J_0\cup\ens{m+1}}}
\end{equation}
hence \eqref{eq:bound_of_A_induction_2} can be rephrased as 
\begin{equation}\label{eq:bound_of_A_induction_3}
 P_1\leq 
f_{1,k,p}\pr{q,C_{p,\B}}\sum_{j=m+1}^{N}\int_0^1\PP\pr{\pr{\E{\norm{D'_j}_{\B}^p \mid
  \Fca_{\Z}\vee \Fca'_{J_0\cup\ens{m+1}}}}^{1/p}>tu  }u^{q-1}du.
\end{equation} 

Now we are in position to use the induction assumption in the following context: for a fixed $j\in\intent{m+1,N}$,
\begin{itemize} 
\item $\til{k}=k+1$,
\item $\til{h_{\gri}}\colon S^m\times S^{k+1}\to \B$ defined by 
\begin{equation}
\til{h_{\gri}}\pr{x_1,\dots,x_m,x_{m+1},\dots,x_{m+k+1}}
=h_{\gri,j}\pr{x_1,\dots,x_m,x_{m+1},\dots,x_{m+k+1}},
\end{equation}
\item $\til{q}=q+1$,
\item $\til{J_0}=J_0\cup\ens{m+1}$,
\item $\til{N}=j-1$, 
\end{itemize}
which gives, in view of \eqref{eq:def_recursive_},
\begin{equation}\label{eq:bound_of_A_induction_4}
 P_1\leq 
f_{m+1,k,p}\pr{q,C_{p,\B}}   
\sum_{J\subset\intent{1,m}}\sum_{\gr{i_J}\in\N^J}\sum_{j=m+1}^{N}\int_0^1\int_0^
1g ^ { \pr{j}}_{\gr{i_J}}\pr{tuv}u^{q-1}v^{q }dudv,
\end{equation} 
where 
\begin{equation}
g^{\pr{j}}_{\gr{i_J}}\pr{t}=
\PP\pr{\pr{ \sum_{\gr{i_{J^c}}: \gr{i_J}+\gr{i_{J^c}}\in \inc_{j-1}^m } \E{ 
\norm{ 
h_{\gr{i_J}+\gr{i_{J^c}}+j\gr{e_{m+1}}}\pr{\xi'_{\intent{1, 
m + k }  }} }_{\B}^p  
\mid \Fca'_{J \cup J_0}  }   }^{1/p}>t}.
\end{equation}
Defining for $J\subset \intent{1,m+1}$ such that $m+1  \in J$, 
\begin{equation}
g_{\gr{i_J}}\pr{t}= \PP\pr{\pr{ \sum_{\gr{i_{J^c}}: 
\gr{i_{J\cup\ens{m+1}}}+\gr{i_{J^c}}\in \inc_{N}^{m+1 }} \E{ \norm{ 
h_{\gr{i_J}+\gr{i_{J^c}}+i_{m+1}\gr{e_{m+1}}}\pr{\xi'_{\intent{1, 
m + k }  }  } }_{\B}^p  
\mid \Fca'_{J \cup J_0}  }   }^{1/p}>t}
\end{equation}
and using the elementary bound \eqref{eq:bound_int_Puv}
 gives 
\begin{equation}\label{eq:bound_P1}
 P_1\leq 
f_{m+1,k,p}\pr{q,C_{p,\B}}  \sum_{\substack{J\subset\intent{1,m+1}\\ m+1\in J } 
}\sum_{\gr{i_J}\in\N^J} \int_0^1 g^{\pr{j}}_{\gr{i_J}}\pr{tu}u^{q-1} du.
\end{equation}
Let us now bound $P_2$ defined by \eqref{eq:key_step_deviation_inequality_Ustats_m_plus_1_etape_1}.

 One can derive
\begin{equation}\label{eq:same_cond_Dj_Dprime_j_bis}
\E{\norm{D_j}_{\B}^p \mid
  \Fca_{\intent{1,j-1}}\vee \Fca'_{J_0}}=
\E{\norm{D'_j}_{\B}^p \mid
  \Fca_{\intent{1,j-1}}\vee \Fca'_{J_0}}
\end{equation}
using that $\E{g\pr{U,V_1}\mid W}=\E{g\pr{U,V_2}\mid W}$ if 
$V_1$ and $V_2$ have the same distribution and are both independent of 
$\sigma\pr{U,W}$ and $D_j$ and $D'_j$ only differ by the replacement 
of $\xi_j$ in $D_j$ by $\xi'_m$. Moreover, using Lemma~\ref{lem:two_cond_exp}, 
the following equality holds $\E{\norm{D'_j}_{\B}^p \mid
  \Fca_{\intent{1,j-1}}\vee \Fca'_{J_0}}=\E{\norm{D'_j}_{\B}^p \mid
  \Fca_{\Z}\vee \Fca'_{J_0}}$. Consequently, 
  \begin{equation}
  P_2\leq 
  f_{1,k,p}\pr{q,C_{p,\B}}\int_0^1\PP\pr{\pr{\sum_{j=m+1}^N\E{\norm{D'_j}_{\B}^p\mid
\Fca_{\Z}\vee \Fca'_{J_0} }}^{1/p}>tu}u^{q-1}du.
  \end{equation}
In order to use the induction assumption, we need to view the term 
$\sum_{j=m+1}^N\E{\norm{D'_j}_{\B}^p\mid
\Fca_{\Z}\vee \Fca'_{J_0} }$ as the conditional expectation of the $p$-th power norm of an element of a new Banach space. This leads us to introduce the 
Banach space $\pr{\til{\B},\norm{\cdot}_{\til{\B}}}$ by 
\begin{equation}
\til{\B}=\ens{\pr{x_j}_{j\in\intent{m+1,N}},x_j\in\B}, \quad 
\norm{\pr{x_j}_{j\in\intent{m+1,N}}}_{\til{{\B}}}=\pr{\sum_{j=m+1}^N\norm{x_j}_{
\B}^p  }^{1/p}.
\end{equation}
We apply the induction assumption to the following setting 
\begin{itemize} 
\item $\til{k}=k+1$,
\item $\til{h_{\gri}}\colon S^m\times S^{k+1}\to \til{\B}$ defined by 
\begin{equation}
\til{h_{\gri}}\pr{x_1,\dots,x_m,x_{m+1},\dots,x_{m+k+1}}
=\pr{h_{\gri,j}\pr{x_1,\dots,x_m,x_{m+1},\dots,x_{m+k+1}}\ind{i_m\leq j-1}  
}_{j\in\intent{m+1,N}},
\end{equation}
\item $\til{q}=q+1$,
\item $\til{J_0}=J_0$,
\item $\til{N}=N$.
\end{itemize}
Using \eqref{eq:def_recursive_} and \eqref{eq:bound_int_Puv}, we find that 
\begin{equation}\label{eq:bound_P2}
P_2\leq f_{m+1,k,p}\pr{q,C_{p,\B}}\int_0^1 
\sum_{J\subset\intent{1,m}}\sum_{\gr{i_J}\in\N^J}g_{\gr{i_J}}
\pr{tw}w^{q-1}dw,
\end{equation}
where 
\begin{equation}
g_{\gr{i_J}}\pr{t}=\PP\pr{\pr{ \sum_{\gr{i_{J^c}}: 
\gr{i_J}+\gr{i_{J^c}}\in \inc_N^m } 
\E{ \norm{ 
\til{h_{\gr{i_J}+\gr{i_{J^c}}  }}\pr{\xi'_{\intent{1,m+k}}  } 
}_{\til{\B}}^p  
\mid \Fca'_{J \cup J_0}  }   }^{1/p}>t}.
\end{equation}
Going back to the $\norm{\cdot}_{\B}$-norm and the expression of 
$\til{h_{\gri}}$ finally gives 
\begin{equation}
P_2\leq f_{m+1,k,p}\pr{q,C_{p,\B}}\int_0^1 
\sum_{\substack{J\subset\intent{1,m+1} \\ m+1\notin J} 
}\sum_{\gr{i_J}\in\N^J}G_{\gr{i_J}}\pr{tw}
w^{q-1}dw
\end{equation}
with 
\begin{equation*}
G_{\gr{i_J}}\pr{t}=\PP\pr{\pr{ \sum_{\gr{i_{J^c\cup\ens{m+1}}}: 
\gr{i_J}+\gr{i_{J^c\cup\ens{m+1}}}\in \inc_N^{m+1} } \E{ \norm{ 
 h_{\gr{i_J}+\gr{i_{J^c\cup\ens{m+1}}}} \pr{\xi'_{\intent{1,m+k}} } }_{ \B}^p  
\mid \Fca'_{J \cup J_0}  }   }^{1/p}>t}.
\end{equation*}
The combination of \eqref{eq:bound_P1} with \eqref{eq:bound_P2} concludes 
the proof that $A\pr{m}$ is true for each $m$ and that of 
Theorem~\ref{thm:deviation_inequality_Ustats}.

\subsection{Proof of Corollary~\ref{cor:ineg_deviation_deg_ordre_d_sym}}
We start from \eqref{eq:decomposition_somme_U_stat_deg}, which gives 
\begin{equation}
\max_{m\leq n\leq N}\norm{U_{m,n}\pr{h}}_{\B}
\leq K_m \sum_{c=d}^m \max_{k\leq n\leq N} 
\norm{U_{c,n}\pr{h^{\pr{c}}}}_{\B}N^{m-c},
\end{equation}
where $h^{\pr{c}}$ is defined as in \eqref{eq:def_hc} and $K_m$ 
depends only on $m$. As a consequence, one has 
\begin{equation}
\PP\pr{\max_{m\leq n\leq N}\norm{U_{m,n}\pr{h}}_{\B}>t}
\leq \sum_{c=d}^m \PP\pr{\max_{k\leq n\leq N} 
\norm{U_{c,n}\pr{h^{\pr{c}}}}_{\B}N^{m-c}> t/\pr{mK_m}}.
\end{equation}
 By Corollary~\ref{cor:meme_noyau} applied with $m$ replaced by $c$ 
 and $h$ by $h^{\pr{c}}$, we derive that 
 \begin{multline}
 \PP\pr{\max_{m\leq n\leq N}\norm{U_{m,n}\pr{h}}_{\B}>t}
\leq  
K\pr{m,p,q,\B} \sum_{c=d}^mN^c 
\int_0^1u^{q-1}\PP\pr{\norm{h^{\pr{c}} \pr{
\xi_{\intent{1,c}}} }_{\B} >t\frac{u}{mK_m}} du\\
+K\pr{m,p,q,\B}\sum_{c=d}^m
\sum_{j=1}^c N^{j}\int_0^1 u^{q-1}
\PP\pr{N^{\frac{c-j}{p}}\pr{   \E{ \norm{ h^{\pr{c}}\pr{\xi_{\intent{1,c}}}   
}_{\B}^p  \mid \xi_{ \intent{1,j}}  }   }^{1/p}>t\frac{u}{mK_m}}du\\
+K\pr{m,p,q,\B}\pr{mK_m}^qt^{-q}\sum_{c=d}^mN^{cq/p}\pr{ 
\E{\norm{h^{\pr{c}} \pr{\xi_{\intent{1,c}}}}^p_{\B}}  }^{q/p}.
 \end{multline}
We conclude using the fact that 
$$
\E{ \norm{ h^{\pr{c}}\pr{\xi_{\intent{1,c}}}   
}_{\B}^p  \mid \xi_{ \intent{1,j}}  }\leq \kappa_m
\E{ \norm{ h \pr{\xi_{\intent{1,m}}}   
}_{\B}^p  \mid \xi_{ \intent{1,j}}  }.
$$
\subsection{Proof of the results of Subsection~\ref{subsec:LGN_comp_Ustats}}

\begin{proof}[Proof of Theorem~\ref{thm:fonction_maximale_Ustat}]

It suffices to show that 
\begin{equation}\label{eq:convergence_series_pour_fct_max}
  \sum_{N\geq 2}
  \PP\pr{\max_{m\leq n\leq 2^N}\norm{\sum_{\gri
  \in\inc^m_n}h\pr{\xi_{\gri}  } }_{\B}>
   2^{1+Nm/p }} \leq \E{\norm{h\pr{\xi_{\intent{1,m}}}}_{\B}^p},
\end{equation}
then \eqref{eq:fonction_maximale_Ustat} can be deduced by 
replacing $h$ by $h/t$ and noticing that 
\begin{align}
  \sup_{n\geq m}\frac{1}{n^{m/p }}\norm{\sum_{\gri
  \in\inc^m_n}h\pr{\xi_{\gri} } }_{\B}
&  \leq \sup_{N\geq 1}
  \frac{1}{2^{m/pN }}\max_{m\leq n\leq 2^{N+1}}\norm{\sum_{\gri
  \in\inc^m_n}h\pr{\xi_{\gri}  } }_{\B}\\
&\leq 2^{m/p}\sup_{N\geq 2}
  \frac{1}{2^{m/pN }}\max_{m\leq n\leq 2^{N }}\norm{\sum_{\gri
  \in\inc^m_n}h\pr{\xi_{\gri}  } }_{\B}.
\end{align}

In order to show \eqref{eq:convergence_series_pour_fct_max}, define for a fixed 
$N\geq 2$
\begin{equation}
 p_N:=\PP\pr{\max_{m\leq n\leq 2^N}\norm{\sum_{\gri
  \in\inc^m_n}h\pr{\xi_{\gri}  } }_{\B}>
   2^{1+Nm/p}}
\end{equation}
and define the function $h_{\leq}\colon S^m\to\B$  and 
$h_{>}\colon S^m\to\B$ by 
\begin{equation}
 h_{\leq}\pr{x_1,\dots,x_m}=h\pr{x_1,\dots,x_m}\ind{
 \norm{h\pr{x_1,\dots,x_m}}_{\B}\leq 2^{Nm/p}} \mbox{ and }
\end{equation}
\begin{equation}
 h_{>}\pr{x_1,\dots,x_m}=h\pr{x_1,\dots,x_m}\ind{
 \norm{h\pr{x_1,\dots,x_m}}_{\B}>2^{Nm/p}}.
\end{equation}
Since \eqref{eq:deg_fct_max} does not hold with 
$h$ replaced by $h_\leq$, we define 
\begin{equation}
 \til{h_\leq}\pr{x_1,\dots,x_m}=
 \sum_{I\subset\intent{1,m}}\pr{-1}^{m-\abs{I}}\E{h_{\leq}   
\pr{x_I,\xi_{\intent{1,m}\setminus I}}}
\end{equation}
where $\pr{x_I,\xi_{\intent{1,m}\setminus I}}$ is the 
element of $S^m$ whose coordinate $i$ is $\xi_i$ if 
$i\in I$ and $\xi_i$ otherwise. We define similarly 
\begin{equation}
 \til{h_>}\pr{x_1,\dots,x_m}=
 \sum_{I\subset\intent{1,m}}\pr{-1}^{m-\abs{I}}\E{h_{>}   
\pr{x_I,\xi_{\intent{1,m}\setminus I}}}.
\end{equation}
By construction and assumption \eqref{eq:deg_fct_max}, 
$h=\til{h_\leq}+\til{h_>}$, which shows that
\begin{equation}
p_N\leq p_{N,\leq}+p_{N,>}, \mbox{ with } p_{N,\leq}= \PP\pr{\max_{m\leq n\leq 
2^N}\norm{\sum_{\gri
  \in\inc^m_n}\til{h_{\leq}}\pr{\xi_{\gri}  } }_{\B}>
   2^{Nm/p}},
\end{equation}
\begin{equation}
 p_{N,>}= \PP\pr{\max_{m\leq n\leq 2^N}\norm{\sum_{\gri
  \in\inc^m_n}\til{h_{>}}\pr{\xi_{\gri}  } }_{\B}>
   2^{Nm/p}}.
\end{equation}
Let us bound $p_{N,\leq}$. Using Markov's inequality 
and noticing that 
 \begin{equation}\label{eq:deg_fct_max_noy_tronque}
  \forall\ell_0\in\intent{1,m},\quad 
\E{\til{h_{\leq}}\pr{\xi_{\intent{1,m}}}\mid  \xi_{
  \intent{1,m}\setminus\ens{\ell_0}
  } }=0, 
 \end{equation}
we find by a use of \eqref{eq:moment_ineq_degenerated_s=p_meme_noyau} with $p$ 
replaced by $q$ that
 \begin{align*}
p_{N,\leq }&\leq 2^{-Nmr/p}
\E{\max_{m\leq n\leq 
2^N}\norm{\sum_{\pr{i_\ell }
  \in\inc^m_n}\til{h_{\leq}}\pr{\xi_{\intent{1,m}} } }_{\B}^r}\\
  &\leq K\pr{m,p,,r\B} 2^{-Nmr/p}2^{Nm}
\E{\norm{\til{h_{\leq}}\pr{\xi_{\intent{1,m}}}}_{\B}^r }. 
 \end{align*}
Then noticing that 
$\til{h_{\leq }}\pr{\xi_{\intent{1,m}}}
=\sum_{I\subset\intent{1,m}}\pr{-1}^{m-\abs{I}}\E{h_{\leq}   
\pr{\xi_{\intent{1,m}}}\mid  \xi_I }$
we get that 
\begin{equation}
 p_{N,\leq}\leq  K\pr{m,p,r,\B} 2^{Nm\pr{1-r/p}}
 \E{\norm{ h_{\leq}\pr{\xi_{\intent{1,m}}}}_{\B}^r }.
\end{equation}
Denoting by $H$ the random variable $\norm{h\pr{\xi_{\intent{1,m}}}}_{\B}$, we 
obtained that 
\begin{equation}
 p_{N,\leq}\leq K\pr{m,p,r,\B} 2^{Nm\pr{1-r/p}}
 \E{H^r\ind{H\leq 2^{mN/p}} }
\end{equation}
and from the elementary bound 
$\sum_{N\geq 2}2^{Nm\pr{1-r/p}}\ind{H\leq 2^{mN/p}}\leq 
\kappa_{p,q,m}H^{p-r}$, we infer that 
$ \sum_{N\geq 2} p_{N,\leq}<\infty$.

It remains to show the convergence of $ \sum_{N\geq 2} p_{N,>}$. To do so, we 
use Markov's inequality combined with the observations that 
$\til{h_{> }}\pr{\xi_{\intent{1,m}}}
=\sum_{I\subset\intent{1,m}}\pr{-1}^{m-\abs{I}}\E{h_{>}   
\pr{\xi_{\intent{1,m}} }\mid  \xi_I }$
and 
$$\max_{m\leq n\leq 2^N}\norm{\sum_{\gri
  \in\inc^m_n}\til{h_{>}}\pr{\xi_{\gri} } }_{\B}\leq  
\sum_{\gri
  \in\inc^m_{2^N}}
\norm{\til{h_{>}}\pr{\xi_{\gri}  } }_{\B},$$
we infer that 
\begin{equation}
 p_{N,>}\leq 2^{-Nm/p}2^{mN}\E{H\ind{H>2^{mN/p}}}.
\end{equation}
and the elementary bound $\sum_{N\geq 2}2^{mN\pr{1-1/p}}
\ind{H>2^{mN/p}}\leq c_p  H^{p-1}$ allows to conclude. This 
ends the proof of Theorem~\ref{thm:fonction_maximale_Ustat}.
\end{proof}

\begin{proof}[Proof of Theorem~\ref{thm:sup_Ustats}]
Suppose now that $\alpha>0$. By Hoeffding's decomposition
\eqref{eq:decomposition_somme_U_stat_deg}, it suffices to prove that
for each positive $\eps$,
\begin{equation}
  \sum_{N=0}^\infty 2^{N\pr{\gamma+1}}
  \PP\pr{\sup_{n\geq 2^N}n^\alpha
    \frac 1{\binom
nc}\norm{U_{c,n}\pr{h^{\pr{c}}}}_{\B}>\eps}<\infty.
\end{equation}
Writing
\begin{align*}
 \PP\pr{\sup_{n\geq 2^N}n^\alpha
    \frac 1{\binom
      nc}\norm{U_{c,n}\pr{h^{\pr{c}}}}_{\B}>\eps}
  &=\PP\pr{\bigcup_{k=1}^\infty\ens{\max_{2^{N+k-1}\leq n\leq
        2^{N+k}}n^\alpha
    \frac 1{\binom
      nc}\norm{U_{c,n}\pr{h^{\pr{c}}}}_{\B}>\eps}}\\
&\leq \sum_{k=1}^{\infty}\PP\pr{\max_{2^{N+k-1}\leq
n\leq 2^{N+k}}n^\alpha
    \frac 1{\binom
      nc}\norm{U_{c,n}\pr{h^{\pr{c}}}}_{\B}>\eps}\\
  &\leq
  \sum_{k=1}^{\infty}\PP\pr{\max_{2^{N+k-1}\leq
n\leq 2^{N+k}}
\norm{U_{c,n}\pr{h^{\pr{c}}}}_{\B}>K\eps 2^{\pr{N+k}\pr{c-\alpha} } },
\end{align*}
where $K$ depends only on $c$ and $\alpha$, we are reduced to prove
that for each positive $\eps$,
\begin{equation}
  \sum_{N=1}^\infty\sum_{k=1}^\infty2^{N\gamma}
  \PP\pr{\max_{c\leq
n\leq 2^{N+k}}
\norm{U_{c,n}\pr{h^{\pr{c}}}}_{\B}> \eps 2^{\pr{N+k}\pr{c-\alpha} }
}<\infty.
\end{equation}
To do so, we use Corollary~\ref{cor:meme_noyau} in the case with
$m$ replaced by $c$, $h$ by $h^{\pr{c}}$, $N$ by $2^{N+k}$, $q$ that
will be specified later and $t=\eps 2^{\pr{N+k}\pr{c-\alpha} }$ (note
that symmetry of $h^{\pr{c}}$ implies that the summand in the second
term of the right hand side of
\eqref{eq:deviation_inequality_Ustats_meme_noyau} depends only on the
cardinal of the set $J$). We are thus reduced to show that for each
$c\in\intent{d,m}$ and $j\in\intent{0,c}$,
\begin{equation}
 \sum_{N=1}^\infty\sum_{k=1}^\infty
 a_{N,k,c}<\infty,
\end{equation}
where
\begin{equation*}
  a_{N,k,c}:=
2^{N\pr{\gamma+1}}2^{\pr{N+k}j}\int_0^1\PP\pr{2^{\frac{N+k}{r}\pr{c-j}
}
   \pr{\E{\norm{h^{\pr{c}}\pr{\xi_{\intent{1,c}} }
}_{\B}^r\mid\xi_{\intent{1,j}} } }^{1/r}>\eps 2^{\pr{N+k}\pr{c-\alpha}
}u }u^{q-1}du.
\end{equation*}
Doing the change of index $\ell=N+k$ for a fixed $N$,
switching the sums and using the fact that $\sum_{N=1}^\ell
2^{N\pr{\gamma+1}}\leq c_\gamma 2^{\ell\pr{\gamma+1}}$,
we are reduced to prove that
for each
$c\in\intent{d,m}$ and $j\in\intent{0,c}$,
\begin{equation}\label{eq:LGN_control_max}
\sum_{\ell=0}^\infty
2^{\ell\pr{\gamma+1+j}}\int_0^1\PP\pr{2^{\frac{\ell}r \pr { c - j }}
   \pr{\E{\norm{h^{\pr{c}}\pr{\xi_{\intent{1,c}}}
}_{\B}^r\mid\xi_{\intent{1,j}} } }^{1/r}>\eps 2^{\ell\pr{c-\alpha}}u
}u^{q-1}du<\infty.
\end{equation}
Using $\sum_{\ell=0}^\infty 2^{\ell \beta }
\PP\pr{Y>2^{\ell \beta'}}\leq C_{\beta,\beta'}
\E{Y^{\beta/\beta'}}$, the series involved in
\eqref{eq:LGN_control_max} is convergent as long as
\begin{equation}\label{eq:Hjr_dans_Lgamma}
 \pr{\E{\norm{h^{\pr{c}}\pr{\xi_{\intent{1,c}}  }
}_{\B}^r\mid\xi_{\intent{1,j}}  } }^{1/r}
\in\el^{q\pr{\gamma,c,j}},
\end{equation}
where
\begin{equation}
 q\pr{\gamma,c,j}
 =\frac{\gamma+j+1}{c-\alpha+\frac{j-c}r}.
\end{equation}
Since 
\begin{equation}
 \E{\norm{h^{\pr{c}}\pr{\xi_{\intent{1,c}} }
}_{\B}^r\mid\xi_{\intent{1,j}}}\leq K_c 
\E{\norm{h\pr{\xi_{\intent{1,m}}}}_{\B}^r\mid\xi_{\intent{1,j}} }=H_{j,r}^r,
\end{equation}
\eqref{eq:Hjr_dans_Lgamma} will be satisfied as only as 
$H_{j,r}\in \el^{q\pr{\gamma,c,j}}$ for each $c$ such that $c\geq 
\max\ens{d,j}$. We conclude by noticing that $q\pr{\gamma,c,j}$ is decreasing 
in $c$.
\end{proof}
\subsection{Proof of Theorem~\ref{thm:WIP_Holder_Ustats}}
As mentioned right after the statement of Theorem~\ref{thm:WIP_Holder_Ustats}, 
the convergence of the finite-dimensional distributions is already contained in 
Corollary~1 of \cite{MR740907}. Therefore, it suffices to prove tightness of 
$\pr{n^{d/2-m} \Uca_{m,n,h}^{\operatorname{pl}}}_{n\geq m}$ in $\Hca_\alpha^o$.

Using Hoeffding's decomposition (cf. 
\eqref{eq:decomposition_somme_U_stat_deg}), one can decompose the process 
$\pr{\Uca_{m,n}\pr{h,t}}_{t\in [0,1]}$ as a sum of similar processes 
associated with the kernels $h^{\pr{c}}$, $c\in\intent{d+1,m}$. It suffices to 
prove that each of them are tight. Since required normalization for the 
original process, namely $n^{m-d/2}$, is bigger than $n^{m-c/2}$, it suffices 
to prove tightness of $\pr{n^{-d/2} 
\Uca_{d,n,h^{\pr{d}}}^{\operatorname{pl}}}_{n\geq m}$ in $\Hca_\alpha^o$. 
Using Proposition~1.1 in \cite{MR4294337} with 
$X_k=\sum_{\gri\in\inc^{d-1}_k  }
h^{\pr{d}}\pr{\xi_{\gri},\xi_k  }$ and denoting 
$S_N=\sum^N_{k=1}X_k$, we are reduced to prove that for each positive $\eps$, 
\begin{equation}\label{eq:tension_holder}
 \lim_{J\to\infty}\limsup_{n\to\infty}
 \sum_{j=J}^{\ent{\log_2n}}\sum_{k=0}^{2^j-1}\PP\pr{
 \abs{S_{\ent{n\pr{k+1}2^{-j}}   }-S_{\ent{nk2^{-j}} }  } >n^{d/2}2^{-
 \alpha j}\eps }=0.
\end{equation}
To do so, we apply Theorem~\ref{thm:deviation_inequality_Ustats} 
with $m$ replaced by $d$, $t=n^{d/2}2^{-\alpha j}\eps $ (with fixed $n$, $j$ 
and $k$), $\B=\R$, $p=2$, $q=p\pr{\alpha}+1$ and 
for $\gri=\pr{i_\ell}_{\ell\in\intent{1,d}}$, 
\begin{equation} 
h_{\gri}\pr{x_{\gri}}=
\begin{cases}
h^{\pr{d}}\pr{x_{\gri}}&
\mbox{if }i_d\in \intent{\ent{nk2^{-j}},\ent{n\pr{k+1}2^{-j}}},\\
 0&\mbox{otherwise.}
\end{cases}
\end{equation}
We define 
$Y_0:=\pr{\E{\pr{h^{\pr{d}}\pr{\xi_{\intent{1,d}}}}^2} 
}^{1/2}$ and the random variable
$$
Y:=\max_{1\leq a\leq d}\pr{
\E{\pr{h^{\pr{d}}\pr{ \xi_{\intent{1,d}}}}^2 \mid 
\xi_{\intent{1,a}}   }}^{1/2},
$$
 which come into play in the right hand side of 
\eqref{eq:deviation_inequality_Ustats}. Note that the assumption 
\eqref{eq:cond_suffisante_PI_holderien_Ustats}
 implies 
that 
\begin{equation}\label{eq:tail_Y}
 \lim_{t\to\infty}t^{p\pr{\alpha}}\PP\pr{Y>t  
 }=0.
\end{equation}
Moreover, the sum over $J$ 
in \eqref{eq:deviation_inequality_Ustats} will be split according to the
case where 
$d$ belongs to the set $J$ or not. If $d\in J$, the sum over $\gr{i_J}$ can we 
written as $\sum_{i_d\in \intent{\ent{nk2^{-j}},\ent{n\pr{k+1}2^{-j}}}}
\sum_{\gr{i_{J\setminus \ens{d}}}}$ and the number of summed terms 
does not exceed $2n2^{ -j}\pr{nk2^{1-j}}^{\card{J}-1}$ while the sum 
over $\gr{i_{J^c}}$ contains at most $\pr{nk2^{1-j}}^{d-\card{J}}$ terms.

When $d\notin J$ and $J$ is not empty, the sum over $\gr{i_J}$ contains at 
most $\pr{nk2^{1-j}}^{\card{J}}$ elements and that over $\gr{i_{J^c}}$
at most $ 2n2^{-j}\pr{nk2^{1-j}}^{d-1-\card{J}}  $ elements. All 
these considerations lead to the bound 
\begin{multline}
 P_{n,k,j}:=\PP\pr{
 \abs{S_{\ent{n\pr{k+1}2^{-j}}   }-S_{\ent{nk2^{-j}} }  } >n^{d/2}2^{-
 \alpha j}\eps }\\
 \leq C_\alpha\pr{n2^{-j}}\pr{nk2^{-j}}^{d-1}\int_0^1
 u^{p\pr{\alpha}}\PP\pr{Y> n^{d/2}2^{-\alpha j}\eps u}du\\
  +
 C_\alpha \sum_{a=1}^d n2^{-j}\pr{nk2^{ -j}}^{a-1}
 \int_0^1\PP\pr{   \pr{nk2^{-j}}^{\pr{d-a}/2} Y>C'_\alpha n^{d/2}2^{-
 \alpha j}\eps u  }u^{p\pr{\alpha}}du\\
 +C_\alpha \sum_{a=1}^d \pr{nk2^{-j}}^{a}
 \int_0^1\PP\pr{  \pr{n2^{-j}}^{1/2} \pr{nk2^{-j}}^{\pr{d-a-1}/2} 
Y>C'_\alpha n^{d/2}2^{-
 \alpha j}\eps u  }u^{p\pr{\alpha}}du\\
 +C_\alpha \pr{n^{d/2}2^{-\alpha j}\eps}^{-p\pr{\alpha}-1}
 \pr{
 \pr{n2^{-j}}\pr{nk2^{-j}}^{d-1} Y_0^2   }^{\pr{p\pr{\alpha}+1}/2},
\end{multline}
which can be simplified as follows 
\begin{multline}\label{eq:borne_demo_Holder1}
 P_{n,k,j} 
 \leq 
 C_\alpha n^{d}2^{-jd}k^{d-1}\int_0^1
 u^{p\pr{\alpha}}\PP\pr{Y> n^{d/2}2^{-\alpha j}\eps u}du\\
 +C_\alpha\sum_{a=1}^d   n^ak^{a-1}2^{-ja} 
 \int_0^1\PP\pr{     Y>C'_\alpha 
n^{a/2}k^{\pr{a-d}/2}2^{-j\pr{\alpha-\pr{d-a}/2
 } }\eps u  }u^{p\pr{\alpha}}du\\
 +C_\alpha \sum_{a=1}^d \pr{nk2^{ -j}}^{a}
 \int_0^1\PP\pr{  
Y>C'_\alpha n^{a/2}2^{-
 \pr{\alpha -\pr{d-a}/2   } j}\eps  k^{-\pr{d-a-1}/2}  u  
}u^{p\pr{\alpha}}du\\
+C_\alpha  2^{j\pr{p\pr{\alpha}+\alpha-d\frac{p\pr{\alpha}+1}2 }}  
k^{\pr{d-1}\frac{p\pr{\alpha}+1}2} Y_0^{p\pr{\alpha}}.
\end{multline}
Define 
\begin{equation}
 \tau\pr{R}:=\sup_{t\geq R}t^{p\pr{\alpha}}\PP\pr{Y>t}.
\end{equation}
Note that assumption \eqref{eq:cond_suffisante_PI_holderien_Ustats} 
combined with Lemma~1.4 in \cite{MR3426520} guarantees that 
$\lim_{R\to\infty}\tau\pr{R}=0$. By looking at monotonicity in each variable 
$a,j$ and $k$, we deduce that there exists a constant $K_\alpha$, depending 
only on $\alpha$, such that 
\begin{equation}
 \min_{a\in\intent{0,d}}\:\:\min_{j\in\intent{J,\ent{\log_2n}}}
\:\: \min_{k\in\intent{0,2^j-1}}C'_\alpha 
n^{a/2}k^{\pr{a-d}/2}2^{-j\pr{\alpha-\pr{d-a}/2
 } }\eps
 \geq K_\alpha n^{1/p\pr{\alpha}},
\end{equation}
\begin{equation}
 \min_{a\in\intent{0,d}}\:\:\min_{j\in\intent{J,\ent{\log_2n}}}
 \:\:\min_{k\in\intent{0,2^j-1}}C'_\alpha n^{a/2}2^{-
 \pr{\alpha -\pr{d-a}/2   } j}\eps  k  ^{ \pr{d-a-1}/2} 
 \geq K_\alpha n^{1/p\pr{\alpha}}.
\end{equation}
Consequently, we derive that 
\begin{multline}
 \PP\pr{     Y >C'_\alpha 
n^{a/2}k^{\pr{a-d}/2}2^{-j\pr{\alpha-\pr{d-a}/2
 } }\eps u  }\\ 
 \leq \pr{C'_\alpha 
n^{a/2}k^{\pr{a-d}/2}2^{-j\pr{\alpha-\pr{d-a}/2
 } }\eps u}^{-p\pr{\alpha}}
 \sup_{t>C'_\alpha 
n^{a/2}k^{\pr{a-d}/2}2^{-j\pr{\alpha-\pr{d-a}/2
 } }\eps u}t^{p\alpha}\PP\pr{Y>t}
 \\
 \leq \pr{C'_\alpha 
n^{a/2}k^{\pr{a-d}/2}2^{-j\pr{\alpha-\pr{d-a}/2
 } }\eps u}^{-p\pr{\alpha}}\tau\pr{K_\alpha n^{1/p\pr{\alpha}}u }
\end{multline}
and with a similar reasoning, we infer that
\begin{multline}\label{eq:bound_third_term}
 \PP\pr{  
Y>C'_\alpha n^{a/2}2^{-
 \pr{\alpha -\pr{d-a}/2   } j}\eps  k^{ \pr{d-a-1}/2}  u  
} \\  \leq 
\pr{C'_\alpha n^{a/2}2^{-
 \pr{\alpha -\pr{d-a}/2   } j}k^{ -
\pr{d-a-1}/2}}^{-p\pr{\alpha}}\tau\pr{K_\alpha 
n^{1/p\pr{\alpha}}u }
\end{multline}
and plugging these bounds into \eqref{eq:borne_demo_Holder1} gives 
\begin{multline}\label{eq:borne_demo_Holder2}
 P_{n,k,j} 
 \leq 
K_{1,\alpha} n^{d\pr{1-p\pr{\alpha}/2}} k^{d-1}2^{j\pr{\alpha p\pr{\alpha}-d } 
}  \int_0^1\tau\pr{K_\alpha n^{1/p\pr{\alpha}}u  }  du \\
+ K_{1,\alpha}\sum_{a=1}^d
n^{a\pr{1-p\pr{\alpha}/2}}k^{a-1+p\pr{\alpha}\pr{d-a}/2}  
2^{j\pr{-a+\alpha p\pr{\alpha}-\pr{d-a}p\pr{\alpha}/2}}
 \int_0^1\tau\pr{K_\alpha n^{1/p\pr{\alpha}}u  }  du\\
 +K_{1,\alpha}2^{j\pr{p\pr{\alpha}+\alpha-d\frac{p\pr{\alpha}+1}2 }}  
k^{\pr{d-1}\frac{p\pr{\alpha}+1}2} Y_0^{p\pr{\alpha}}
\end{multline}
where $K_{1,\alpha}$ depends only on $\alpha$ (note that since 
$p\pr{\alpha}>2$, the bound obtained via \eqref{eq:bound_third_term} 
for  the third term of the right hand side of 
\eqref{eq:borne_demo_Holder1} is smaller than the one for the second term). 
Summing over $k$  
furnishes the bound 
\begin{multline}
\sum_{k=0}^{2^j-1} P_{n,k,j}
  \leq 
K_{2,\alpha} n^{d\pr{1-p\pr{\alpha}/2}}2^{j\alpha p\pr{\alpha}} +
  K_{2,\alpha}\sum_{a=1}^d
  n^{a\pr{1-p\pr{\alpha}/2}}2^{j\alpha p\pr{\alpha}}
   \int_0^1\tau\pr{K_\alpha n^{1/p\pr{\alpha}}u  }  du\\
   +K_{2,\alpha}  2^{j\pr{\alpha-1/2}}Y_0^{p\pr{\alpha}},
\end{multline}
then summuing over $j$ gives 
\begin{equation}
   \sum_{j=J}^{\ent{\log_2n}}\sum_{k=0}^{2^j-1}P_{n,k,j}
   \leq K_{3,\alpha} \int_0^1\tau\pr{K_\alpha n^{1/p\pr{\alpha}}u  }  du
   + K_{3,\alpha} 2^{-J \pr{1/2-\alpha}},
\end{equation}
from which \eqref{eq:tension_holder} follows. This ends the proof of 
Theorem~\ref{thm:WIP_Holder_Ustats}.

\subsection{Proof of Theorem~\ref{thm:moment_inequality_incomplete_Ustat}}
By definition of the functions $h^{\pr{c}}$ given in \eqref{eq:def_hc} 
and symmetry of $h$, the incomplete $U$-statistic defined in 
\eqref{eq:def_Ustat_incomplete} can be decomposed as 
\begin{equation}
 U_{m,n}^{\operatorname{inc}}\pr{h}=
 \sum_{c=d}^m \sum_{\gri\in\inc^c_n    } 
a_{n,c;\gri}h^{\pr{c}}\pr{\xi_{\gri}},
\end{equation}
where 
\begin{equation}
 a_{n,c;\gri}=\sum_{\substack{\grj  
\in\inc^m_n  \\ \ens{i_1,\dots,i_c}\subset\ens{j_1,\dots,j_m} 
}}a_{n;\grj}.
\end{equation}
The previous sum can be split according to the position
of the indexes $i_1,\dots,i_c$. We define for
$\gr{k}=\pr{k_u}_{u=1}^c$ such that $1\leq
k_1<\dots<k_c\leq m$ the random variable
\begin{equation}
  a_{n,c,\gr{k},\gri}
  =\sum_{\grj}
  a_{n;\grj},
\end{equation}
where the sum carries over the
$\grj=\pr{j_k}_{k\in\intent{1,m}}\in\inc^m_n$ such that
$j_{k_u}=i_u$ for each $u\in\intent{1,c}$. As a consequence, it
suffices to prove that for each $c\in\intent{1,m}$,
and each $\gr{k}=\pr{k_u}_{u=1}^c$ such that $1\leq
k_1<\dots<k_c\leq m$,
\begin{equation}\label{eq:etape_cle_inegalite_U_stats_incompletes}
  \E{  \norm{\sum_{\gri\in\inc^c_n}   
a_{n,c,\gr{k};\gri}h^{\pr{c}}\pr{\xi_{\gri}}
}_{\B}^q  } 
\leq
C\pr{\B,m,p,q}\pr{n^{q\pr{m-d}+dq/p}p_n^q+n^{mq/p}p_n^{q/p}+n^mp_n }.
\end{equation}
To do so, we first condition on the random variables
$a_{n;\grj}$. Writing
\begin{equation}\label{eq:expression_somme_poids}
 a_{n,c,\gr{k};\gri}=
f_{c,\gr{k};\gri}
\pr{ \pr{a_{n;\grj}}_{\grj
\in\inc^m_n }    }
\end{equation}
and denoting by $\mu_0$ the law of
$a_{n;\grj}$, we have
\begin{equation}
   \E{  \norm{\sum_{ \gri\in\inc^c_n    }
a_{n,c,\gr{k};\gri}h^{\pr{c}}\pr{\xi_{\gri} }
}_{\B}^q  }
\\
=\int
   \E{  \norm{\sum_{
   \gri\in\inc^c_n    }
f_{c,\gr{k};\gri}
\pr{ \pr{x_{n;\grj}}_{\grj
\in\inc^m_n }    }h^{\pr{c}}\pr{\xi_{\gri} }
}_{\B}^q}d\mu_0\pr{x_{n;\grj}},
\end{equation}
that is, the last integral is taken over the $C^m_n$ variables
$x_{n;\grj}$,
$\grj\in\inc^m_n$.
For fixed $x_{n;\grj}\in\ens{0,1}$,
$\grj\in\inc^m_n$,
we apply Corollary~\ref{cor:moment_inequality}
in the following setting: $m=c$ and 
$h_{\gri}\pr{\xi_{\gri}}
=f_{c,\gr{k};\gri}
\pr{ \pr{x_{n;\grj}}_{\grj
\in\inc^m_n }    }h^{\pr{c}}\pr{\xi_{\gri}}$. We get, after having bounded the 
terms of the
form $\E{\pr{   \E{ \norm{
h \pr{\xi_{\intent{1,c}}}   }_{\B}^p  \mid
\xi_J  }   }^{q/p} }$ by
$\E{\norm{h^{\pr{c}}\pr{\xi_{\gri}}}_{\B}^q}$ which is
in tern smaller than a constant depending only on $q/p $ times
$\E{\norm{h\pr{\xi_{\intent{1,m}}}}_{\B}^q}=:H$,
that
\begin{multline}
 \E{  \norm{\sum_{\gri\in\inc^c_n    }
f_{c,\gr{k};\gri }
\pr{ \pr{x_{n;\grj}}_{\grj
\in\inc^m_n }    }h^{\pr{c}}\pr{\xi_{\gri} }
}_{\B}^q}\\
\leq
K\pr{m,p,q,\B}H\sum_{\gri\in\inc_N^m}\abs{f_{c,\gr{k};\gri}
\pr{ \pr{x_{n;\grj}}_{\grj
\in\inc^m_n }
}}^q
\\
+K\pr{m,p,q,\B}H
\sum_{\emptyset \subsetneq J\subsetneq \intent{1,c}}\sum_{\gr{i_J}\in\N^J
}
\E{\pr{ \sum_{\gr{i_{J^c}}: \gr{i_J}+\gr{i_{J^c}}\in
      \inc_N^c}\abs{f_{c,\gr{k};\gr{i_J}+\gr{i_{J^c}}}
\pr{ \pr{x_{n;\grj}}_{\grj
    \in\inc^m_n }    }}^p   }^{q/p}}\\
+K\pr{m,p,q,\B}H \pr{\sum_{\gri\in\inc_n^c }\abs{f_{c,\gr{k};\gri}
\pr{ \pr{x_{n;\grj}}_{\grj
    \in\inc^m_n }    }}^p
  }^{q/p}.
\end{multline}
Integrating over the $C^m_n$ variables
$x_{n;\grj}$,
$\grj\in\inc^m_n$ with respect to
$\mu_0$, we get that
\begin{multline}\label{eq:etape_control_moments_Ustatistique_incomp}
 \E{  \norm{\sum_{\gri\in\inc^c_n    }
a_{n,c,\gr{k};\gri}h^{\pr{c}}\pr{\xi_{\gri} }
}_{\B}^q  } 
\leq
K\pr{m,p,q,\B}H\sum_{\gri\in\inc_n^m}\E{\abs{a_{n,c,\gr{k};\gri}}^q}
\\
+K\pr{m,p,q,\B}H
\sum_{\emptyset \subsetneq J\subsetneq \intent{1,c}}\sum_{\gr{i_J}\in\N^J
}
\E{\pr{ \sum_{\gr{i_{J^c}}: \gr{i_J}+\gr{i_{J^c}}\in
      \inc_n^c}\abs{ a_{n,c,\gr{k};\gr{i_J}+\gr{i_J^c}}}^p   }^{q/p}}\\
+K\pr{m,p,q,\B}H \E{\pr{\sum_{\gri\in\inc_n^c 
}\abs{a_{n,c,\gr{k};\gri}}^p
}^{q/p}}.
\end{multline}
We are thus reduced to bound the moments of order $q/p$ of
random variables of the form
\begin{equation}\label{eq:va_somme_de_bernoulli}
  Y=\sum_{a\in A}\pr{\sum_{b\in B}Y_{a,b} }^p,
\end{equation}
where $A$ and $B$ are finite sets of respective cardinal $\abs{A}$
and $\abs{B}$, and $\pr{Y_{a,b}}_{a\in A,b\in B}$ is independent.
\begin{Lemma}\label{lem:moments_of_Y}
  Let $Y$ be a random variable of the form
  \eqref{eq:va_somme_de_bernoulli}, where $Y_{a,b}$ has a Bernoulli
  distribution of parameter $y\in [0,1]$, that is,
  $\PP\pr{Y_{a,b}=1}=y$ and $\PP\pr{Y_{a,b}=0}=1-y$.
  There exists a constant $C_{p,q}$ such that for $q\geq p$,
  \begin{equation}
    \E{Y^{q/p}}\leq
C_{p,q}\pr{\abs{A}^{q/p}\abs{B}^qy^q+\abs{A}^{q/p}
  \abs{B}^{q/p}y^{q/p}+\abs{A}\abs{B}y }
\end{equation}
\end{Lemma}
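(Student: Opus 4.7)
The plan is to reduce $Y$ to a sum of i.i.d.\ non-negative random variables and apply a Rosenthal-type inequality for such sums. To begin, setting $S_a:=\sum_{b\in B}Y_{a,b}$ makes $\pr{S_a}_{a\in A}$ i.i.d.\ with $\mathrm{Bin}\pr{\abs{B},y}$ distribution, so $Y=\sum_a S_a^p$ is a sum of i.i.d.\ non-negative variables indexed by $A$.

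The second step will be to control the moments of $S_a$. Expressing $S_a^r$ as a non-negative linear combination (via Stirling numbers of the second kind) of the factorial products $S_a\pr{S_a-1}\cdots\pr{S_a-k+1}$, and using the identity $\E{S_a\pr{S_a-1}\cdots\pr{S_a-k+1}}=\abs{B}\pr{\abs{B}-1}\cdots\pr{\abs{B}-k+1}y^k\leq\pr{\abs{B}y}^k$, one obtains $\E{S_a^r}\leq C_r\pr{\pr{\abs{B}y}^r+\abs{B}y}$ for each $r\geq 1$; the last inequality uses that $\pr{\abs{B}y}^k\leq\max\ens{\pr{\abs{B}y}^r,\abs{B}y}$ for every $k\in\intent{1,r}$.

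The third step will invoke the non-negative Rosenthal-type inequality $\E{\pr{\sum_a X_a}^s}\leq K_s\pr{\pr{\abs{A}\E{X_1}}^s+\abs{A}\E{X_1^s}}$, valid for $s\geq 1$ and i.i.d.\ non-negative $X_a$. For $s\geq 2$ this follows by applying the classical Rosenthal inequality to the centered sum $\sum_a\pr{X_a-\E{X_1}}$ and combining with Minkowski's inequality; for $1\leq s\leq 2$ the Bahr--Esseen inequality plays the same role. I will apply this with $X_a=S_a^p$ and $s=q/p$: the moment bounds from the second step yield $\pr{\abs{A}\E{S_a^p}}^{q/p}\leq C_{p,q}\pr{\abs{A}^{q/p}\abs{B}^qy^q+\abs{A}^{q/p}\abs{B}^{q/p}y^{q/p}}$ and $\abs{A}\E{S_a^q}\leq C_q\pr{\abs{A}\abs{B}^qy^q+\abs{A}\abs{B}y}$. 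Since $q\geq p$ and $\abs{A}\geq 1$ give $\abs{A}\leq\abs{A}^{q/p}$, the term $\abs{A}\abs{B}^qy^q$ is absorbed into $\abs{A}^{q/p}\abs{B}^qy^q$, producing the claimed inequality.

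The main obstacle I anticipate lies in the third step: the non-negative Rosenthal-type bound is classically proved separately in the regimes $s\geq 2$ (Rosenthal) and $1\leq s\leq 2$ (Bahr--Esseen), so care must be taken that the centering-plus-Minkowski step patches these together with constants depending only on $s$. Once this is in hand, the rest is a bookkeeping exercise, with the only delicate point being to verify that the asymmetry between $\abs{A}\abs{B}^qy^q$ and $\abs{A}^{q/p}\abs{B}^qy^q$ in the two contributions of the Rosenthal bound is absorbed correctly.
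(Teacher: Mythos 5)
Your argument is correct and shares the same skeleton as the paper's: reduce $Y=\sum_{a}S_a^p$ with $S_a=\sum_b Y_{a,b}$ to a sum of i.i.d.\ non-negative variables, apply a Rosenthal-type inequality with exponent $s=q/p$ to the outer sum, and bound the inner moments $\E{S_a^p}$ and $\E{S_a^q}$. The two places where you diverge from the paper are both more laborious than necessary. First, the paper does not prove the Rosenthal-type inequality for non-negative summands by centering; it simply cites Corollary~3 of Lata{\l}a~(\cite{MR1457628}), which states exactly the bound $\E{\pr{\sum_i X_i}^s}\leq K_s\pr{\sum_i\E{X_i}}^s+K_s\sum_i\E{X_i^s}$ for independent non-negative $X_i$ and $s\geq 1$; your centering-plus-Rosenthal/Bahr--Esseen route is valid (and the absorption of the variance term works via interpolation and $\abs{A}\geq 1$), but it re-derives a known result. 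Second, and more importantly, the paper does not need Stirling numbers to control $\E{S_a^r}$: it applies the very same inequality \eqref{eq:moment_inequality_non_negative} a second and third time with $I=B$, $X_i=Y_{a,i}$ and $s=p$, $s=q$, immediately yielding $\E{S_a^r}\leq K_r\pr{\abs{B}^ry^r+\abs{B}y}$ for any real $r\geq 1$. Your Stirling/factorial-moment computation as written only handles integer $r$, whereas here $r=p\in(1,2]$ and $r=q\geq p$ are in general non-integers; this is a genuine gap, though fixable by interpolating $\E{S^r}$ between consecutive integer moments via H\"older and checking the cases $\abs{B}y\lessgtr 1$. The paper's choice of reusing the same lemma for the inner and outer sums sidesteps this issue entirely and is cleaner.
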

\begin{proof}
We will use the following moment inequality for
sums of independent non-negative random variables, given in
Corollary~3 in \cite{MR1457628}: for each $s\geq 1$,
there exists a constant $K_s$ such that for each
finite set of independent non-negative random variables
$\pr{X_i}_{i\in I}$,
\begin{equation}\label{eq:moment_inequality_non_negative}
  \E{\pr{\sum_{i\in I}X_i }^s }
  \leq K_s \pr{\sum_{i\in I}\E{X_i}}^s+
  K_s \sum_{i\in I}\E{X_i^s}.
\end{equation}
Applying \eqref{eq:moment_inequality_non_negative}
with $s=q/p$, $I=A$ and $X_i=\pr{\sum_{b\in B}Y_{i,b}}^p$ gives
\begin{equation}\label{eq:premiere_borne_Y}
  \E{Y^{q/p}}\leq K_{q/p}
  \pr{\sum_{a\in A}\E{\pr{\sum_{b\in B}Y_{a,b}}^p}
  }^{q/p}+K_{q/p}\sum_{a\in A}\E{\pr{\sum_{b\in B}Y_{a,b} }^q }.
\end{equation}
Applying \eqref{eq:moment_inequality_non_negative}
with $s=p$, $I=B$ and $X_i=Y_{a,i}$ gives
\begin{equation}\label{eq:borne_somme_puissance_s_des_esperances}
 \E{\pr{\sum_{b\in B}Y_{a,b}}^p}
 \leq K_p \pr{\sum_{b\in B}\E{Y_{a,b}}   }^p
 +K_p\sum_{b\in B}\E{Y_{a,b}^p}
 =K_p \pr{\abs{B}^p y^p +\abs{B}y}.
\end{equation}
Applying \eqref{eq:moment_inequality_non_negative}
with $s=q$, $I=B$ and $X_i=Y_{a,i}$ gives
\begin{equation}\label{eq:bornes_esperance_des_puissances}
 \E{\pr{\sum_{b\in B}Y_{a,b} }^q }
 \leq K_q\pr{\sum_{b\in B}\E{Y_{a,b}}}^q
 +K_q\sum_{b\in B}\E{Y_{a,b}^q}=
 K_q\pr{\abs{B}^py^q+\abs{B}y}.
\end{equation}
The combination of \eqref{eq:premiere_borne_Y},
\eqref{eq:borne_somme_puissance_s_des_esperances},
\eqref{eq:bornes_esperance_des_puissances} and 
 inequality $\abs{A}^{q/p}\geq\abs{A}$ ends the
  proof of Lemma~\ref{lem:moments_of_Y}.
\end{proof}
We use Lemma~\ref{lem:moments_of_Y} in order to bound each term of
\eqref{eq:etape_control_moments_Ustatistique_incomp}.
Notice that the random variables $a_{n,c,\gr{k};\gri}$ have
the same distribution. Moreover, each of these random variables is a
sum of a number that does not exceed $n^{m-c}$
hence by Lemma~\ref{lem:moments_of_Y} with sets $A$ and $B$ of
respective cardinal one and $n^{m-c}$ gives
\begin{equation}
\label{eq:borne_U_stat_inc_somme_moments_ordre_q}
 \sum_{\gri\in\inc_n^m}\E{\abs{a_{n,c,\gr{k};\gri}}^q}
\leq C_{p,q}n^c\pr{ n^{\pr{m-c}q}p_n^q
  +n^{\pr{m-c}q/p}p_n^{q/p} +n^{m-c}p_n}
\end{equation}
Since $q/p\geq 1$ and $q\geq 1$, the previous bound is
non-increasing in $c$ and reaches its maximum for $c=d$ hence the
bound \eqref{eq:borne_U_stat_inc_somme_moments_ordre_q} can be
converted into a bound independent of $c$, namely,
\begin{equation}\label{eq:ustats_inc_borne1}
  \sum_{\gri\in\inc_n^m}\E{\abs{a_{n,c,\gr{k};\gri}}^q}\leq 
C_{p,q}\pr{n^{\pr{m-d}q+d }p_n^q+
    n^{\pr{m-d}q/p+d}p_n^{q/p}+n^mp_n}.
\end{equation}
For the second term of the right hand side of
\eqref{eq:etape_control_moments_Ustatistique_incomp},
consider $\emptyset \subsetneq J\subsetneq \intent{1,c}$
of cardinal $j\in\intent{1,c-1}$. The sum over
$\gr{i_J}$ consists of at most $n^j$ terms
and the involved expectation can be bounded
via Lemma~\ref{lem:moments_of_Y} where
$A$ has at most $n^{c-k}$ elements
and $B$ has at most $n^{m-c}$ elements. We thus obtain
\begin{multline}
\sum_{\emptyset \subsetneq J\subsetneq \intent{1,c}}\sum_{\gr{i_J}\in\N^J
}
\E{\pr{ \sum_{\gr{i_{J^c}}: \gr{i_J}+\gr{i_{J^c}}\in
      \inc_N^c}\abs{ a_{n,c,\gr{k};\gr{i_J}+\gr{i_J^c}}}^p   }^{q/p}}\\
\leq C_{p,q}\sum_{j=1}^{c-1}
n^j \pr{ n^{\pr{c-j}q/p}n^{\pr{m-c}q}p_n^q+
  n^{\pr{c-j}q/p}n^{\pr{m-c}q/p}p_n^{q/p}
  +n^{c-j}n^{m-c}p_n }\\
=C_{p,q} \pr{n^{mq}p_n^{q}n^{cq\pr{1/p-1}}\sum_{j=1}^{c-1}n^{j\pr{1-q/p}}
+ n^{mq/p}p_n^{q/p}\sum_{j=1}^{c-1}n^{j\pr{1-q/p}}
+ n^{m}p_n}.
\end{multline}
Since $q/p\geq 1$, monotonicity of the previous quantities in $k$ and
$c$ leads to the bound
\begin{multline}\label{eq:ustats_inc_borne2}
 \sum_{\emptyset \subsetneq J\subsetneq \intent{1,c}}
 \sum_{\gr{i_J}\in\N^J}
\E{\pr{ \sum_{\gr{i_{J^c}}: \gr{i_J}+\gr{i_{J^c}}\in
      \inc_n^c}\abs{ a_{n,c,\gr{k};\gr{i_J}+\gr{i_J^c}}}^p   }^{q/p}}\\
\leq
C_{p,q}\pr{n^{mq+q/p-1}p_n^q+ n^{\pr{m-1}q/p+1}p_n^
{ q / p } +C_{p, q } n ^ { m }p_n}\\
\leq C_{p,q}\pr{n^{mq+d\pr{q/p-1}}p_n^q+  n^{mq/p}
p_n^{ q / p } +n^{m }p_n}.
\end{multline}
Finally, the last term of the right hand side of
\eqref{eq:etape_control_moments_Ustatistique_incomp}
is controlled via an application of Lemma~\ref{lem:moments_of_Y} with
a set $A$ of cardinal smaller than $n^c$
and a set $B$ of cardinal smaller than $n^{m-c}$, which gives
\begin{equation}
 \E{\pr{\sum_{\gri\in\inc_n^c }\abs{a_{n,c,\gr{k};\gri}}^p
}^{q/p}}
\leq C_{p,q}\pr{n^{cq/p}n^{\pr{m-c}q}p_n^q
+ n^{cq/p}n^{\pr{m-c}q/p}p_n^{q/p}+
n^mp_n},
\end{equation}
and since the last quantity is maximal at $c=d$,
\begin{equation}\label{eq:ustats_inc_borne3}
 \E{\pr{\sum_{\gri\in\inc_N^c }\abs{a_{n,c,\gr{k};\gri}}^p
}^{q/p}}
\\
\leq C_{p,q}\pr{n^{dq/p}n^{\pr{m-d}q}p_n^q
+  n^{m q/p}p_n^{q/p}+
n^mp_n}.
\end{equation}
We conclude the proof of
Theorem~\ref{thm:moment_inequality_incomplete_Ustat}
by collecting the bounds \eqref{eq:ustats_inc_borne1},
\eqref{eq:ustats_inc_borne2} and \eqref{eq:ustats_inc_borne3}.
  \begin{appendix}
 \section{ Elementary properties of conditional expectation}
 
We collect some lemmas which will be used in the proofs.

The following fact on conditional expectation is well-known.
\begin{Lemma}\label{lem:removing_ind_cond_exp}
Let $Y$ be an integrable random variable and let $\Aca$ and $\Bca$ be two $\sigma$-algebras such that $\Bca$ is independent of $\sigma\pr{Y}\vee \Aca$. 
Then 
\begin{equation}
\E{Y\mid\Aca\vee\Bca}=\E{Y\mid\Aca}.
\end{equation}
\end{Lemma}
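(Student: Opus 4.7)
The plan is to verify the two defining properties of conditional expectation for $\E{Y\mid\Aca}$ with respect to the enlarged $\sigma$-algebra $\Aca\vee\Bca$. Measurability is immediate because $\Aca\subseteq \Aca\vee\Bca$, so the only substantive task is the integration identity: for every $C\in\Aca\vee\Bca$,
\begin{equation*}
\E{Y\ind{C}}=\E{\E{Y\mid\Aca}\ind{C}}.
\end{equation*}

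The family $\Pca=\ens{A\cap B: A\in\Aca, B\in\Bca}$ is a $\pi$-system generating $\Aca\vee\Bca$. First I would check the identity on $\Pca$. Fix $A\in\Aca$ and $B\in\Bca$ and write $\ind{A\cap B}=\ind{A}\ind{B}$. Since $\Bca$ is independent of $\sigma\pr{Y}\vee\Aca$, the random variable $\ind{B}$ is independent of $Y\ind{A}$, whence $\E{Y\ind{A}\ind{B}}=\E{Y\ind{A}}\PP\pr{B}$. By the definition of $\E{Y\mid\Aca}$ (and $\ind{A}$ being $\Aca$-measurable), $\E{Y\ind{A}}=\E{\E{Y\mid\Aca}\ind{A}}$. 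Finally, $\E{Y\mid\Aca}\ind{A}$ is $\Aca$-measurable, hence independent of $\ind{B}$, so $\E{\E{Y\mid\Aca}\ind{A}}\PP\pr{B}=\E{\E{Y\mid\Aca}\ind{A}\ind{B}}$. Chaining these equalities yields the desired identity on~$\Pca$.

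To extend from $\Pca$ to $\Aca\vee\Bca$, I would invoke Dynkin's $\pi$-$\lambda$ theorem: the class of sets $C$ for which the identity holds is a $\lambda$-system (closed under complementation and countable increasing unions by dominated convergence, using integrability of $Y$ and of $\E{Y\mid\Aca}$) that contains the $\pi$-system $\Pca$, hence contains $\sigma\pr{\Pca}=\Aca\vee\Bca$. Since $\E{Y\mid\Aca}$ is $\Aca\vee\Bca$-measurable and satisfies the defining integration property, it is a version of $\E{Y\mid\Aca\vee\Bca}$. No step is really an obstacle; the only point to handle carefully is ensuring that the independence of $\Bca$ with $\sigma\pr{Y}\vee\Aca$ is genuinely used twice (once to factor $\E{Y\ind{A}\ind{B}}$ and once to factor $\E{\E{Y\mid\Aca}\ind{A}\ind{B}}$).
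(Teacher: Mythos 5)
The paper records Lemma~\ref{lem:removing_ind_cond_exp} as a ``well-known'' fact and supplies no proof, so there is nothing internal to compare against. Your argument is the standard one and is correct: you check measurability trivially ($\Aca\subseteq\Aca\vee\Bca$), verify the defining integral identity on the $\pi$-system $\Pca=\ens{A\cap B:A\in\Aca,B\in\Bca}$ by factoring out $\PP\pr{B}$ twice (once from $\E{Y\ind{A}\ind{B}}$ via independence of $\Bca$ from $\sigma\pr{Y}\vee\Aca$, once when reinserting $\ind{B}$ against the $\Aca$-measurable $\E{Y\mid\Aca}\ind{A}$), and then pass to $\sigma\pr{\Pca}=\Aca\vee\Bca$ by the $\pi$--$\lambda$ theorem, using integrability of $Y$ (hence of $\E{Y\mid\Aca}$) to justify closure of the good-set class under increasing limits via dominated convergence. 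You correctly flag the one point worth being careful about, namely that independence is invoked twice. The proof is complete and would be an acceptable substitute for the omitted argument.
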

\begin{Lemma}\label{lem:two_cond_exp}
Let $\pr{\xi_i}_{i\in\Z}$ be an independent sequence and let $\pr{\xi'_i}_{i\in\Z}$ be an independent copy of $\pr{\xi_i}_{i\in\Z}$. Denote for 
$I, J\subset \Z$ by $\Fca_I$ (respectively $\Fca'_J$) the $\sigma$-algebra generated by the random variables 
$\xi_i,i\in I$ (respectively $\xi'_j,j\in J$). For each integrable random variable $Y$ and each subsets $I_1,I_2$ and $J_1,J_2$ of $\Z$, 
\begin{equation}
\E{\E{Y\mid\Fca_{I_1}\vee \Fca'_{J_1}}\mid \Fca_{I_2}\vee \Fca'_{J_2}}=\E{Y\mid \Fca_{I_1\cap I_2} \vee \Fca'_{J_1\cap J_2}}.
\end{equation}
\end{Lemma}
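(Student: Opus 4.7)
The plan is to combine the standard "freezing" property of conditional expectation with two applications of the tower property, exploiting the fact that the entire family $\pr{\xi_i}_{i\in\Z}\cup \pr{\xi'_j}_{j\in\Z}$ consists of mutually independent random variables. Set $Z:=\E{Y\mid \Fca_{I_1}\vee\Fca'_{J_1}}$, and pick a measurable representative so that $Z=g\pr{\pr{\xi_i}_{i\in I_1\cap I_2},\pr{\xi_i}_{i\in I_1\setminus I_2},\pr{\xi'_j}_{j\in J_1\cap J_2},\pr{\xi'_j}_{j\in J_1\setminus J_2}}$. I would group the first and third blocks as $X=\pr{\pr{\xi_i}_{i\in I_1\cap I_2},\pr{\xi'_j}_{j\in J_1\cap J_2}}$ and the remaining two as $W=\pr{\pr{\xi_i}_{i\in I_1\setminus I_2},\pr{\xi'_j}_{j\in J_1\setminus J_2}}$. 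Then $X$ is $\Fca_{I_2}\vee\Fca'_{J_2}$-measurable, while $W$ is independent of $\Fca_{I_2}\vee\Fca'_{J_2}$, since its coordinates are indexed by sets disjoint from $I_2$ and $J_2$ and the whole family is jointly independent. The freezing identity therefore gives $\E{Z\mid \Fca_{I_2}\vee\Fca'_{J_2}}=G\pr{X}$ with $G\pr{x}=\E{g\pr{x,W}}$; in particular this conditional expectation is $\Fca_{I_1\cap I_2}\vee\Fca'_{J_1\cap J_2}$-measurable.

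Since $\Fca_{I_1\cap I_2}\vee\Fca'_{J_1\cap J_2}\subset \Fca_{I_2}\vee\Fca'_{J_2}$ and the random variable $\E{Z\mid \Fca_{I_2}\vee\Fca'_{J_2}}$ is measurable with respect to this smaller $\sigma$-algebra, one application of the tower property gives $\E{Z\mid \Fca_{I_2}\vee\Fca'_{J_2}}=\E{Z\mid \Fca_{I_1\cap I_2}\vee\Fca'_{J_1\cap J_2}}$. Using then that $\Fca_{I_1\cap I_2}\vee\Fca'_{J_1\cap J_2}\subset \Fca_{I_1}\vee\Fca'_{J_1}$, a second application of the tower property collapses the inner conditioning defining $Z$ and yields $\E{Z\mid \Fca_{I_1\cap I_2}\vee\Fca'_{J_1\cap J_2}}=\E{Y\mid \Fca_{I_1\cap I_2}\vee\Fca'_{J_1\cap J_2}}$, which is exactly the required identity.

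The only substantive step is the freezing identity, which is morally the content of Lemma~\ref{lem:removing_ind_cond_exp} applied to $g\pr{X,W}$: one first verifies it for product functions of the form $\varphi\pr{X}\psi\pr{W}$ (where it reduces to the identity $\E{\varphi\pr{X}\psi\pr{W}\mid \Fca_{I_2}\vee\Fca'_{J_2}}=\varphi\pr{X}\E{\psi\pr{W}}$) and then extends by a monotone class argument, followed by the usual passage from bounded to integrable $Y$. Everything else is mechanical, so I expect the main obstacle to be no more than careful bookkeeping to track which coordinates belong to each of the four groups listed above.
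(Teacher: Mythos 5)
Your proof is correct and takes essentially the same route as the paper: both rest on the freezing principle of Lemma~\ref{lem:removing_ind_cond_exp} together with the tower property. The only cosmetic difference is that you group all superfluous coordinates, namely $\xi_i$ for $i\in I_1\setminus I_2$ and $\xi'_j$ for $j\in J_1\setminus J_2$, into a single freezing step and then conclude via a measurability observation plus the tower property, whereas the paper applies Lemma~\ref{lem:removing_ind_cond_exp} twice in succession (once to peel off $\Fca'_{J_2\setminus J_1}$, once to peel off $\Fca_{I_2\setminus I_1}$) before collapsing the inner conditioning.
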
 
 \begin{proof}
 Expressing $\Fca'_{J_2}$ as $\Fca'_{J_1\cap J_2}\vee\Fca'_{J_2\setminus I_1} $ and 
 applying Lemma~\ref{lem:removing_ind_cond_exp} with 
 $\widetilde{Y}=\E{Y\mid\Fca_{I_1}\vee \Fca'_{J_1}}$, $\Aca=\Fca_{I_2}\vee \Fca'_{J_1\cap J_2}$ 
 and $\Bca=\Fca'_{J_2\setminus J_1}$, we get that 
 \begin{equation}
\E{\E{Y\mid\Fca_{I_1}\vee \Fca'_{J_1}}\mid \Fca_{I_2}\vee \Fca'_{J_2}}=
\E{\E{Y\mid\Fca_{I_1}\vee \Fca'_{J_1}}\mid \Fca_{I_2}\vee \Fca'_{J_1\cap J_2}}.
\end{equation}
Then writing $ \Fca_{I_2}$ as $\Fca_{I_1\cup I_2}\vee \Fca_{I_2\setminus I_1}$
and applying Lemma~\ref{lem:removing_ind_cond_exp} with 
 $\widetilde{Y}=\E{Y\mid\Fca_{I_1}\vee \Fca'_{J_1}}$, $\Aca=\Fca_{I_1\cap I_2}\vee \Fca'_{J_1\cap J_2}$ 
 and $\Bca=\Fca_{I_2\setminus I_1}$ gives 
  \begin{equation}
\E{\E{Y\mid\Fca_{I_1}\vee \Fca'_{J_1}}\mid \Fca_{I_2}\vee \Fca'_{J_2}}=
\E{\E{Y\mid\Fca_{I_1}\vee \Fca'_{J_1}}\mid \Fca_{I_1\cap I_2}\vee \Fca'_{J_1\cap J_2}}
\end{equation}
and the inclusion $ \Fca_{I_1\cap I_2}\vee \Fca'_{J_1\cap J_2}\subset 
\Fca_{I_1}\vee \Fca'_{J_1}$ allows to conclude. 
 \end{proof}
 
\end{appendix}
\def\polhk\#1{\setbox0=\hbox{\#1}{{\o}oalign{\hidewidth
  \lower1.5ex\hbox{`}\hidewidth\crcr\unhbox0}}}\def\cprime{$'$}
  \def\polhk#1{\setbox0=\hbox{#1}{\ooalign{\hidewidth
  \lower1.5ex\hbox{`}\hidewidth\crcr\unhbox0}}} \def\cprime{$'$}


\begin{thebibliography}{10}

\bibitem{MR2294982}
R.~Adamczak.
\newblock Moment inequalities for {$U$}-statistics.
\newblock {\em Ann. Probab.}, 34(6):2288--2314, 2006.

\bibitem{MR1323145}
M.~A. Arcones.
\newblock A {B}ernstein-type inequality for {$U$}-statistics and
  {$U$}-processes.
\newblock {\em Statist. Probab. Lett.}, 22(3):239--247, 1995.

\bibitem{MR0407963}
P.~Assouad.
\newblock Espaces {$p$}-lisses et {$q$}-convexes, in\'{e}galit\'{e}s de
  {B}urkholder.
\newblock In {\em S\'{e}minaire {M}aurey-{S}chwartz 1974--1975: {E}spaces
  {$L^{p}$}, applications radonifiantes et g\'{e}om\'{e}trie des espaces de
  {B}anach, {E}xp. {N}o. {XV}}, page~8. 1975.

\bibitem{MR1336800}
P.~J. Bickel and Y. Ritov.
\newblock An exponential inequality for {$U$}-statistics with applications to
  testing.
\newblock {\em Probab. Engrg. Inform. Sci.}, 9(1):39--52, 1995.

\bibitem{MR0474582}
G. Blom.
\newblock Some properties of incomplete {$U$}-statistics.
\newblock {\em Biometrika}, 63(3):573--580, 1976.

\bibitem{MR2797944}
I.~S. Borisov and N.~V. Volodko.
\newblock Limit theorems and exponential inequalities for canonical {$U$}- and
  {$V$}-statistics of dependent trials.
\newblock In {\em High dimensional probability {V}: the {L}uminy volume},
  volume~5 of {\em Inst. Math. Stat. (IMS) Collect.}, pages 108--130. Inst.
  Math. Statist., Beachwood, OH, 2009.

\bibitem{MR1054394}
Y.~V. Borovskikh.
\newblock An estimate for the rate of convergence in the laws of large numbers
  for {$U$}-statistics.
\newblock In {\em Current analysis and its applications ({R}ussian)}, pages
  9--16, 218. ``Naukova Dumka'', Kiev, 1989.

\bibitem{MR1742613}
J. Bretagnolle.
\newblock A new large deviation inequality for {$U$}-statistics of order 2.
\newblock {\em ESAIM Probab. Statist.}, 3:151--162, 1999.

\bibitem{MR0365692}
D.~L. Burkholder.
\newblock Distribution function inequalities for martingales.
\newblock {\em Ann. Probability}, 1:19--42, 1973.

\bibitem{MR4025737}
X. Chen and K. Kato.
\newblock Randomized incomplete {$U$}-statistics in high dimensions.
\newblock {\em Ann. Statist.}, 47(6):3127--3156, 2019.

\bibitem{MR1955347}
V.~H. de~la Pe\~{n}a, R. Ibragimov, and S. Sharakhmetov.
\newblock On sharp {B}urkholder-{R}osenthal-type inequalities for
  infinite-degree {$U$}-statistics.
\newblock volume~38, pages 973--990. 2002.
\newblock En l'honneur de J. Bretagnolle, D. Dacunha-Castelle, I. Ibragimov.

\bibitem{MR3077911}
J. Ding, J.~R. Lee, and Y. Peres.
\newblock Markov type and threshold embeddings.
\newblock {\em Geom. Funct. Anal.}, 23(4):1207--1229, 2013.

\bibitem{MR4550210}
Q. Duchemin, Y. De~Castro, and C. Lacour.
\newblock Concentration inequality for {U}-statistics of order two for
  uniformly ergodic {M}arkov chains.
\newblock {\em Bernoulli}, 29(2):929--956, 2023.

\bibitem{MR4503429}
A. D\"{u}rre and D. Paindaveine.
\newblock On the consistency of incomplete {U}-statistics under infinite
  second-order moments.
\newblock {\em Statist. Probab. Lett.}, 193:Paper No. 109714, 8, 2023.

\bibitem{MR1857312}
E. Gin\'{e}, R. Lata\l a, and J. Zinn.
\newblock Exponential and moment inequalities for {$U$}-statistics.
\newblock In {\em High dimensional probability, {II} ({S}eattle, {WA}, 1999)},
  volume~47 of {\em Progr. Probab.}, pages 13--38. Birkh\"{a}user Boston,
  Boston, MA, 2000.

\bibitem{MR1227625}
E. Gin\'{e} and J. Zinn.
\newblock Marcinkiewicz type laws of large numbers and convergence of moments
  for {$U$}-statistics.
\newblock In {\em Probability in {B}anach spaces, 8 ({B}runswick, {ME}, 1991)},
  volume~30 of {\em Progr. Probab.}, pages 273--291. Birkh\"{a}user Boston,
  Boston, MA, 1992.

\bibitem{MR3426520}
D. Giraudo.
\newblock Holderian weak invariance principle under a {H}annan type condition.
\newblock {\em Stochastic Process. Appl.}, 126(1):290--311, 2016.

\bibitem{MR3615086}
D. Giraudo.
\newblock Holderian weak invariance principle for stationary mixing sequences.
\newblock {\em J. Theoret. Probab.}, 30(1):196--211, 2017.

\bibitem{MR4046858}
D. Giraudo.
\newblock Deviation inequalities for {B}anach space valued martingales
  differences sequences and random fields.
\newblock {\em ESAIM Probab. Stat.}, 23:922--946, 2019.

\bibitem{MR4294337}
D. Giraudo.
\newblock An exponential inequality for {$U$}-statistics of {I}.{I}.{D}. data.
\newblock {\em Teor. Veroyatn. Primen.}, 66(3):508--533, 2021.

\bibitem{MR4243516}
D. Giraudo.
\newblock Limit theorems for {$U$}-statistics of {B}ernoulli data.
\newblock {\em ALEA Lat. Am. J. Probab. Math. Stat.}, 18(1):793--828, 2021.

\bibitem{giraudo2022deviation}
D. Giraudo.
\newblock Deviation inequality for {B}anach-valued orthomartingales, 2022.

\bibitem{MR0336788}
W.~F. Grams and R.~J. Serfling.
\newblock Convergence rates for {$U$}-statistics and related statistics.
\newblock {\em Ann. Statist.}, 1:153--160, 1973.

\bibitem{MR3769824}
F. Han.
\newblock An exponential inequality for {U}-statistics under mixing conditions.
\newblock {\em J. Theoret. Probab.}, 31(1):556--578, 2018.

\bibitem{MR0026294}
W. Hoeffding.
\newblock A class of statistics with asymptotically normal distribution.
\newblock {\em Ann. Math. Statistics}, 19:293--325, 1948.

\bibitem{MR2073426}
C. Houdr\'{e} and P. Reynaud-Bouret.
\newblock Exponential inequalities, with constants, for {U}-statistics of order
  two.
\newblock In {\em Stochastic inequalities and applications}, volume~56 of {\em
  Progr. Probab.}, pages 55--69. Birkh\"{a}user, Basel, 2003.

\bibitem{MR1734266}
R.~Ibragimov and S. Sharakhmetov.
\newblock Analogues of {K}hintchine, {M}arcinkiewicz-{Z}ygmund and {R}osenthal
  inequalities for symmetric statistics.
\newblock {\em Scand. J. Statist.}, 26(4):621--633, 1999.

\bibitem{MR0753810}
S. Janson.
\newblock The asymptotic distributions of incomplete {$U$}-statistics.
\newblock {\em Z. Wahrsch. Verw. Gebiete}, 66(4):495--505, 1984.

\bibitem{MR995572}
W.~B. Johnson and G.~Schechtman.
\newblock Martingale inequalities in rearrangement invariant function spaces.
\newblock {\em Israel J. Math.}, 64(3):267--275, 1988.

\bibitem{MR0903815}
P.~N. Kokic.
\newblock Rates of convergence in the strong law of large numbers for
  degenerate {$U$}-statistics.
\newblock {\em Statist. Probab. Lett.}, 5(5):371--374, 1987.

\bibitem{MR1457628}
R. Lata\l a.
\newblock Estimation of moments of sums of independent real random variables.
\newblock {\em Ann. Probab.}, 25(3):1502--1513, 1997.

\bibitem{MR4289844}
M. L\"{o}we and S. Terveer.
\newblock A central limit theorem for incomplete {$U$}-statistics over
  triangular arrays.
\newblock {\em Braz. J. Probab. Stat.}, 35(3):499--522, 2021.

\bibitem{MR3087566}
P. Major.
\newblock {\em On the estimation of multiple random integrals and
  {$U$}-statistics}, volume 2079 of {\em Lecture Notes in Mathematics}.
\newblock Springer, Heidelberg, 2013.

\bibitem{MR0881248}
T.~L. Malevich and B.~Abdalimov.
\newblock The rate of convergence in the strong law of large numbers for a
  {$U$}-statistic, the jackknifed statistic and the von {M}ises functional
  formed from it.
\newblock In {\em Limit theorems for probability distributions ({R}ussian)},
  pages 79--88, 228. ``Fan'', Tashkent, 1985.

\bibitem{MR740907}
A. Mandelbaum and M.~S. Taqqu.
\newblock Invariance principle for symmetric statistics.
\newblock {\em Ann. Statist.}, 12(2):483--496, 1984.

\bibitem{MR4040993}
S. Minsker and X. Wei.
\newblock Moment inequalities for matrix-valued {U}-statistics of order 2.
\newblock {\em Electron. J. Probab.}, 24:Paper No. 133, 32, 2019.

\bibitem{MR2021875}
S.~V. Nagaev.
\newblock On probability and moment inequalities for supermartingales and
  martingales.
\newblock In {\em Proceedings of the {E}ighth {V}ilnius {C}onference on
  {P}robability {T}heory and {M}athematical {S}tatistics, {P}art {II} (2002)},
  volume~79, pages 35--46, 2003.

\bibitem{MR2915089}
M.~M. Nasari.
\newblock Strong law of large numbers for weighted {$U$}-statistics:
  application to incomplete {$U$}-statistics.
\newblock {\em Statist. Probab. Lett.}, 82(6):1208--1217, 2012.

\bibitem{MR1331198}
I. Pinelis.
\newblock Optimum bounds for the distributions of martingales in {B}anach
  spaces.
\newblock {\em Ann. Probab.}, 22(4):1679--1706, 1994.

\bibitem{MR2000642}
A. Ra{\v{c}}kauskas and C. Suquet.
\newblock Necessary and sufficient condition for the {L}amperti invariance
  principle.
\newblock {\em Teor. \u Imov\=\i r. Mat. Stat.}, (68):115--124, 2003.

\bibitem{MR0358948}
P.~K. Sen.
\newblock On {$L\sp{p}$}-convergence of {$U$}-statistics.
\newblock {\em Ann. Inst. Statist. Math.}, 26:55--60, 1974.

\bibitem{MR4059185}
Y. Shen, F. Han, and D. Witten.
\newblock Exponential inequalities for dependent {V}-statistics via random
  {F}ourier features.
\newblock {\em Electron. J. Probab.}, 25:Paper No. 7, 18, 2020.

\bibitem{MR1400594}
Z. Su.
\newblock The law of the iterated logarithm and {M}arcinkiewicz law of large
  numbers for {$B$}-valued {$U$}-statistics.
\newblock {\em J. Theoret. Probab.}, 9(3):679--701, 1996.

\bibitem{MR1607435}
H. Teicher.
\newblock On the {M}arcinkiewicz-{Z}ygmund strong law for {$U$}-statistics.
\newblock {\em J. Theoret. Probab.}, 11(1):279--288, 1998.

\end{thebibliography}
\end{document}